\newtheorem{theorem}{Theorem}[section]
\newtheorem{lemma}[theorem]{Lemma}
\newtheorem{corollary}[theorem]{Corollary}
\newtheorem{proposition}[theorem]{Proposition}
\newcommand{\const}{\operatorname{const}}
\newcommand{\tr}{\operatorname{tr}}
\newcommand{\supp}{\operatorname{supp}}
\mathchardef\varPi="0105
\mathchardef\varGamma="0100
\mathchardef\varPhi="0108
\mathchardef\varPsi="0109
\newcommand{\PP}{P}
\newcommand{\QQ}{Q}
\newcommand{\EE}{E}
\newcommand{\Var}{\operatorname{Var}}
\newcommand{\Cov}{\operatorname{Cov}}
\newcommand{\sfP}{\mathsf{P}}
\newcommand{\sfE}{\mathsf{E}}
\newcommand{\NN}{\mathbb{N}}
\newcommand{\ZZ}{\mathbb{Z}}
\newcommand{\RR}{\mathbb{R}}
\newcommand{\CC}{{\mathbb C}}
\newcommand{\CP}{{\varPi}}
\newcommand{\calX}{\mathcal{X}}
\newcommand{\e}{{ e}}
\newcommand{\dif}{{ d}}
\newcommand{\topp}{\top}
\begin{document}
\begin{frontmatter}

\title{Universality of the limit shape of convex lattice polygonal lines}
\runtitle{Universality of the limit shape}

\begin{aug}
\author[A]{\fnms{Leonid V.} \snm{Bogachev}\corref{}\thanksref{t1}\ead[label=e1]{L.V.Bogachev@leeds.ac.uk}} and
\author[B]{\fnms{Sakhavat M.} \snm{Zarbaliev}\thanksref{t2}\ead[label=e2]{szarbaliev@mail.ru}}
\runauthor{L. V. Bogachev and S. M. Zarbaliev}
\affiliation{University of Leeds and International Institute of
Earthquake Prediction Theory and Mathematical Geophysics}
\address[A]{Department of Statistics\\
School of Mathematics\\
University of Leeds\\
Leeds LS2 9JT\\
United Kiingdom\\
\printead{e1}}
\address[B]{International Institute\\
\quad of Earthquake Prediction Theory\\
\quad and Mathematical Geophysics\\
Moscow 117997\\
Russia\\
\printead{e2}}
\end{aug}

\thankstext{t1}{Supported in part by a Leverhulme
Research Fellowship.}

\thankstext{t2}{Supported in part by DFG Grant 436 RUS 113/722.}

\received{\smonth{7} \syear{2008}}
\revised{\smonth{2} \syear{2010}}

%
\begin{abstract}
Let $\CP_n$ be the set of convex polygonal lines $\varGamma$ with
vertices on~$\ZZ_+^2$ and fixed endpoints $0=(0,0)$ and
$n=(n_1,n_2)$. We are concerned with the \textit{limit shape}, as
$n\to\infty$, of ``typical'' $\varGamma\in\CP_n$ with respect to
a~parametric family of probability measures $\{\PP_n^r, 0<r<\infty\}$
on~$\CP_n$, including the uniform distribution ($r=1$) for which the
limit shape was found in the early 1990s independently by
A. M. Vershik, I. B\'{a}r\'{a}ny and Ya. G. Sinai. We show that, in
fact, the limit shape is \textit{universal} in the class
$\{\PP^r_n\}$, even though $\PP^r_n$ ($r\ne1$) and $\PP^1_n$ are
asymptotically singular. Measures $\PP^r_n$ are constructed,
following Sinai's approach, as conditional distributions
$\QQ_z^r(\cdot | \CP_n)$, where $\QQ
_z^r$ are suitable
product measures on the space $\CP=\bigcup_n\CP_n$, depending on an
auxiliary ``free'' parameter $z=(z_1,z_2)$. The transition from
$(\CP,\QQ_z^r)$ to $(\CP_n,\PP_n^r)$ is based on the asymptotics of
the probability $\QQ_z^r(\CP_n)$, furnished by a certain
two-dimensional local limit theorem. The proofs involve subtle
analytical tools including the M\"{o}bius inversion formula and
properties of zeroes of the Riemann zeta function.
\end{abstract}

%
\begin{keyword}[class=AMS]
\kwd[Primary ]{52A22}
\kwd{05A17}
\kwd[; secondary ]{60F05}
\kwd{05D40}.
\end{keyword}
\begin{keyword}
\kwd{Convex lattice polygonal lines}
\kwd{limit shape}
\kwd{randomization}
\kwd{local limit theorem}.
\end{keyword}

\end{frontmatter}

\section{Introduction}\label{sec1}

\subsection{Background: The limit shape}\label{sec1.1}

In this paper, a \textit{convex lattice polygonal line} $\varGamma$ is
a piecewise linear path on the plane, starting at the origin
$0=(0,0)$, with vertices on the integer lattice
$\ZZ^2_+:=\{(i,j)\in\ZZ^2\dvtx i,j\ge0\}$, and such that the inclination
of its consecutive edges strictly increases staying between $0$ and
$\pi/2$. Let $\CP$ be the set of all convex lattice polygonal lines
with finitely many edges, and denote by $\CP_{n}\subset\CP$ the
subset of polygonal lines $\varGamma\in\CP$ whose right endpoint
$\xi=\xi_\varGamma$ is fixed at $n=(n_1,n_2)\in\ZZ^2_+ $.

We are concerned with the problem of \textit{limit shape} of
``typical'' $\varGamma\in\CP_n$, as $n\to\infty$, with respect to
some probability measure $\PP_n$ on $\CP_n$. Here the ``limit
shape'' is understood as a planar curve $\gamma^*$ such that, with
overwhelming $\PP_n$-probability for large enough $n$, properly
scaled polygonal lines $\tilde\varGamma_n=S_n(\varGamma)$ lie within
an arbitrarily small neighborhood of $\gamma^*$. More precisely, for
any $\varepsilon>0$ it should hold that
%
\begin{equation}\label{eq:LLN}
\lim_{n\to\infty}\PP_n\{d(\tilde\varGamma_n,\gamma^*)\le
\varepsilon\}=1,
\end{equation}
where $d(\cdot,\cdot)$ is some metric on the path space---for
instance, induced by the Hausdorff distance between compact sets (in
$\RR^2$),
%
\begin{equation}\label{eq:dH}
d_{\mathcal H}(A,B):=\max\Bigl\{\max_{x\in A}\min_{y\in B}|x-y|,
 \max_{y\in B}\min_{x\in A}|x-y|\Bigr\},
\end{equation}
where \mbox{$| \cdot |$} is the Euclidean vector norm.

Of course, the limit shape and its very existence may depend on the
pro\-bability law $\PP_n$. With respect to the \textit{uniform
distribution} on $\CP_n$, the prob\-lem was solved independently by
Vershik~\cite{V1}, B\'{a}r\'{a}ny \cite{B} and Sinai~\cite{S}, who
showed that, under the scaling $S_n\dvtx(x_1,x_2)\mapsto
(x_1/n_1,x_2/n_2)$, the limit shape $\gamma^*$ is given by a
parabola arc defined by the Cartesian equation
%
\begin{equation}\label{eq:gamma0}
\sqrt{1-x_1}+\sqrt{x_2}=1,\qquad 0\le x_1,x_2\le1.
\end{equation}
More precisely [cf. (\ref{eq:LLN})], if $n=(n_1,n_2)\to\infty$ so
that $n_2/n_1\to c \in(0,\infty)$ then, for any $\varepsilon>0$,
%
\begin{equation}\label{eq:VBS}
\lim_{n\to\infty}\frac{\#\{\varGamma\in\CP_n\dvtx  d_{\mathcal
H}(\tilde\varGamma_n,\gamma^*)\le\varepsilon\}}{\#(\CP_n)}=1.
\end{equation}
[Here and in what follows, $\#(\cdot)$ denotes the number of
elements in a set.]

The proofs in papers \cite{V1,B} involved a blend of combinatorial,
variational and geometric arguments and were based on a direct
analysis of the corresponding generating function via a multivariate
saddle-point method for a~5Cauchy integral \cite{V1} or a suitable
Tauberian theorem \cite{B}. Extending some of these ideas and using
large deviations techniques, Vershik and~Zeitou\-ni~\cite{VZ}
developed a systematic approach to the limit shape problem for the
uniform measure on more general ensembles of convex lattice
polygonal lines with various geometric restrictions.

Sinai \cite{S} proposed an alternative, probabilistic method
essentially based on randomization of the right endpoint of the
polygonal line $\varGamma\in\CP_n$; we will comment
more on this
approach in Section \ref{sec1.3}. Let us point out that the paper
\cite{S} contained the basic ideas but only sketches of the proofs.
Some of these techniques were subsequently elaborated by Bogachev
and Zarbaliev \cite{BZ1,BZ2} and also by Zarbaliev in his Ph.D. thesis
\cite{Z}; however, a complete proof has not been published as
yet.
\begin{remark}\label{rm:strict}
$\!\!\!$A polygonal line $\varGamma\in\CP_n$ can be viewed as a vector sum~of
its consecutive edges, resulting in a given integer vector
$n=(n_1,n_2)$; due to the convexity property, the order of parts in
the sum is uniquely determined. Hence, any such $\varGamma$
represents an integer \textit{vector partition} of $n\in\ZZ_+^2$ or,
more precisely, a~\textit{strict} vector partition (i.e., without
proportional parts; see \cite{V1}). This observation incorporates
the topic of convex lattice polygonal lines in a~general theory of
integer partitions. For ordinary, one-dimensional partitions, the
problem of limit shape can also be set up, but for a special
geometric object associated with partitions, called \textit{Young
diagrams} \cite{V2,V3}.
\end{remark}

\subsection{Main result}\label{sec1.2}
Vershik \cite{V1}, page 20, pointed out that it would be interesting
to study asymptotic properties of convex lattice polygonal lines
under other probability measures $\PP_n$ on $\CP_n$, and conjectured
that the limit shape might be universal for some classes of
measures. Independently, a~similar hypothesis was put forward by
Prokhorov \cite{Prokhorov}.

In the present paper, we prove Vershik--Prokhorov's universality
conjecture for a parametric family of probability measures
$\PP_n^{r}$ ($0<r<\infty$) on $\CP_n$ defined by
%
\begin{equation}\label{eq:P-r}
\PP_n^r (\varGamma):=\frac{b^{r}(\varGamma)}{  B_n^r}
,\qquad \varGamma\in\CP_n,
\end{equation}
with
%
\begin{equation}\label{eq:b-Gamma}
b^r(\varGamma):=\prod_{e_i\in\varGamma} b_{k_i}^{r},\qquad
B_n^r:=\sum_{\varGamma\in\CP_n}
b^{r}(\varGamma),
\end{equation}
where the product is taken over all edges $e_i$ of
$\varGamma\in\CP_n$, $k_i$ is the number of lattice points on the
edge $e_i$ except its left endpoint and
%
\begin{equation}\label{eq:b-k-r}\quad
b_k^{r}:=\pmatrix{r+k-1\cr k}=\frac{r(r+1)\cdots(r+k-1)}{k!},\qquad
k=0,1,2,\ldots.
\end{equation}
Note that for  $r=1$ the measure (\ref{eq:P-r}) is reduced to the
uniform distribution on~$\CP_n$. Qualitatively, formulas
(\ref{eq:b-Gamma}), (\ref{eq:b-k-r}) introduce certain probability
weights for random edges on $\varGamma$ by encouraging ($r>1$) or
discouraging ($r<1$) lattice points on each edge as compared to the
reference case $r=1$.

Assume that $0<c_1\le n_2/n_1\le c_2<\infty$, and consider the
standard scaling transformation $S_n(x)=(x_1/n_1,x_2/n_2)$,
 $x=(x_1,x_2)\in\RR^2$. It is convenient to work with a
sup-distance between the scaled polygonal lines
$\tilde\varGamma_n:=S_n(\varGamma)$ ($\varGamma\in\CP_n$) and the
limit curve $\gamma^*$, based on the \textit{tangential
parameterization} of convex paths\vspace*{1pt} (see the \hyperref[app]{Appendix},
Section \ref{A-1}). More specifically, for $t\in[0,\infty]$ denote
by $\tilde\xi_n(t)$ the right endpoint of that part of
$\tilde\varGamma_n$ where the tangent slope (wherever it exists)
does not exceed $t$. Similarly, the tangential parameterization of
the parabola arc $\gamma^*$ [see (\ref{eq:gamma0})] is given by the
vector function
%
\begin{equation}\label{eq:g*}
g^*(t)=\biggl(\frac{t^2+2t}{(1+t)^2} ,
\frac{t^2}{(1+t)^2}\biggr),\qquad  0\le t\le\infty.
\end{equation}
The \textit{tangential distance} between $\tilde\varGamma_n$ and
$\gamma^*$ is then defined as
%
\begin{equation}\label{eq:d-T}
d_{\mathcal T}(\tilde\varGamma_n,\gamma^*):= {\sup_{0\le
t\le\infty}}|\tilde\xi_n(t)-g^*(t)|,
\end{equation}
where, as before, \mbox{$| \cdot |$} is the Euclidean vector norm in
$\RR^2$ [cf. general definition~(\ref{eq:dT}) of the metric
$d_{\mathcal T}(\cdot,\cdot)$ in the \hyperref[app]{Appendix}, Section \ref{A-1}].

We can now state our main result about the universality of the limit~sha\-pe~$\gamma^*$ under the measures $\PP_n^r$ (cf.
Theorem \ref{th:8.2a}).
\begin{theorem}\label{th:main1}
For each $r\in(0,\infty)$ and any  $\varepsilon>0$,
\[
\lim_{n\to\infty}\PP_n^r\{d_{\mathcal
T}(\tilde\varGamma_n,\gamma^*)\le\varepsilon\}=1.
\]
\end{theorem}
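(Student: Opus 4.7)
\medskip

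\noindent\textbf{Proof plan.} Following Sinai's randomization scheme sketched in the abstract, the first step is to embed $\PP_n^r$ into a family of product measures on the full space $\CP$. Parameterize each $\varGamma\in\CP$ by the nonnegative integer multiplicities $(X_d)$ of its edges along primitive directions $d=(p,q)\in\ZZ_+^2$ with $\gcd(p,q)=1$, and define $\QQ_z^r$ (for $z=(z_1,z_2)\in(0,1)^2$) by declaring the $X_d$ independent and negative-binomially distributed:
\[
\QQ_z^r\{X_d=k\}=(1-z_1^p z_2^q)^r\binom{r+k-1}{k}(z_1^p z_2^q)^k,\qquad k=0,1,2,\dots
\]
Since $\sum_{k\ge 0} b_k^r w^k=(1-w)^{-r}$ and $\xi_\varGamma=\sum_d X_d\cdot d$, the edge weight $b^r(\varGamma)$ from \eqref{eq:b-Gamma} factors and $\QQ_z^r(\varGamma)\propto b^r(\varGamma)\,z_1^{n_1}z_2^{n_2}$ on each $\CP_n$; therefore the conditional law $\QQ_z^r(\,\cdot\,|\,\CP_n)$ equals $\PP_n^r$ for every $z$, independently of the ``free'' parameter.

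\medskip

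\noindent Next, tune $z=z(n)$ so that $\EE_z^r[\xi_\varGamma]=n$, using $\EE_z^r[X_d]=rz_1^p z_2^q/(1-z_1^p z_2^q)$. Writing $z_i=\e^{-\alpha_i}$ and converting the resulting sums over primitive $d$ into sums over all $(p,q)\in\ZZ_+^2\setminus\{(0,0)\}$ via Möbius inversion $\mathbf{1}_{\gcd(p,q)=1}=\sum_{m\mid\gcd(p,q)}\mu(m)$, Mellin-transform asymptotics yield $\alpha_i(n)\asymp n_i^{-1/3}$, with the parameter $r$ entering only through scalar prefactors in which $1/\zeta(s)$ naturally appears. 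The analytical heart of the argument is then a two-dimensional \emph{local limit theorem},
\[
\QQ_{z(n)}^r(\CP_n)=\QQ_{z(n)}^r\{\xi_\varGamma=n\}\asymp (n_1 n_2)^{-2/3},
\]
obtained by Fourier inversion of the characteristic function
\[
\Phi(t)=\prod_{d}\left(\frac{1-z_1^p z_2^q}{1-z_1^p z_2^q\,\e^{i t\cdot d}}\right)^{r},
\]
where the product runs over primitive $d$: near $t=0$ one sees Gaussian-type behavior, and off a shrinking neighborhood of the origin one must prove that $|\Phi(t)|$ decays polynomially, which again relies on the Möbius/zeta machinery to control the infinite product over primitive lattice points.

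\medskip

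\noindent Given the LLT, the transfer to $\PP_n^r$ is standard. For each slope $t\in[0,\infty]$ the tangentially parameterized partial endpoint $\tilde\xi_n(t)$ (see \eqref{eq:d-T}) is a scaled sum of the independent vectors $X_d\cdot d$ with $q/p\le t$; computing its mean and variance by the same Mellin methods gives $\EE_{z(n)}^r[\tilde\xi_n(t)]\to g^*(t)$ and $\Var_{z(n)}^r[\tilde\xi_n(t)]\to 0$ uniformly in $t$, while Bernstein-type deviation bounds on the independent summands yield $\QQ_{z(n)}^r\{|\tilde\xi_n(t)-g^*(t)|>\varepsilon\}\le \e^{-c(\varepsilon)n^{\beta}}$ for some $\beta>0$. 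Monotonicity of $t\mapsto\tilde\xi_n(t)$ in each coordinate combined with a Glivenko--Cantelli argument on a fine grid of slopes upgrades pointwise to uniform convergence in the tangential distance, and then for $A_\varepsilon=\{d_{\mathcal T}(\tilde\varGamma_n,\gamma^*)>\varepsilon\}$,
\[
\PP_n^r(A_\varepsilon)=\frac{\QQ_{z(n)}^r(A_\varepsilon\cap\CP_n)}{\QQ_{z(n)}^r(\CP_n)}\le\frac{\QQ_{z(n)}^r(A_\varepsilon)}{\QQ_{z(n)}^r(\CP_n)}=O\bigl((n_1 n_2)^{2/3}\bigr)\cdot\QQ_{z(n)}^r(A_\varepsilon)\longrightarrow 0.
\]
The main obstacle is the local limit theorem, because of the infinite family of non-identically-distributed summands and the arithmetic structure of $\Phi(t)$; once that is in place, universality is manifest, since $r$ enters the leading-order asymptotics of $\alpha(n)$, of the mean and variance of $\xi_\varGamma$, and of $\QQ_{z(n)}^r(\CP_n)$ only through scalar prefactors that cancel in the limit, leaving the same curve $\gamma^*$ for every $r\in(0,\infty)$.
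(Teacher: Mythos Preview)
Your proposal follows the same Sinai randomization scheme as the paper: product measure $\QQ_z^r$ with negative-binomial marginals, calibration of $z$, local limit theorem for $\QQ_z^r(\CP_n)$, and transfer to $\PP_n^r$ via the conditioning inequality. The execution differs in two places worth noting. First, you tune $z$ so that $\EE_z^r[\xi]=n$ \emph{exactly}; the paper instead fixes $z$ explicitly via \eqref{alpha}--\eqref{delta12} and then proves the refined estimate $\EE_z^r(\xi)=n+o(|n|^{2/3})$ (Theorem~\ref{th:5.1}), which is its hardest analytic step and requires moving a Mellin contour into the critical strip and invoking the zero-free region of $\zeta(s)$. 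Your exact-calibration route legitimately bypasses this whole section, at the modest cost of arguing existence and asymptotics of the implicit solution. Second, for the deviation of $\tilde\xi_n(t)$ from $g^*(t)$ you propose Bernstein-type exponential bounds plus a Glivenko--Cantelli grid argument; the paper instead exploits that $t\mapsto\xi_j(t)-\EE_z^r[\xi_j(t)]$ is a c\`adl\`ag martingale (independent increments indexed by slope) and applies the Kolmogorov--Doob maximal inequality with a sixth-moment bound (Lemma~\ref{lm:6.6}), obtaining $\QQ_z^r\{\sup_t|\cdot|>\varepsilon n_j\}=O(|n|^{-2})$ directly. This is cleaner than a grid-plus-union-bound and only needs a polynomial rate to beat the denominator $\QQ_z^r(\CP_n)\asymp|n|^{-4/3}$, whereas your exponential bound is overkill but would also work. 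Both variants are sound, and you correctly flag the local limit theorem as the central technical obstacle.
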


It can be shown (see the \hyperref[app]{Appendix}, Section \ref{A-1}) that the
Hausdorff distance $d_{\mathcal H}$ [see (\ref{eq:dH})] is dominated
by the tangential distance $d_{\mathcal T}$ defined in~(\ref{eq:dT})
(however, these metrics are not equivalent). In particular, Theorem~\ref{th:main1}
with $r=1$ recovers the limit shape result
(\ref{eq:VBS}) for the uniform distribution on $\CP_n$. As was
mentioned above, in the original paper by Sinai~\cite{S} the proof
of the limit shape result was only sketched, so even in the uniform
case our proof seems to be the first complete implementation of
Sinai's probabilistic method (which, as we will try to explain
below, is far from straightforward).

Let us also point out that Theorem \ref{th:main1} is a
\textit{nontrivial} extension of (\ref{eq:VBS}) since the measures
$\PP^r_n$ ($r\ne1$) are \textit{not close} to the uniform distribution
$\PP_n^1$ in total variation distance (denoted by \mbox{$\|\cdot\|_{\mathrm{TV}}$}),
and in fact $\|\PP_n^r-\PP_n^1\|_{\mathrm{TV}}\to1$ as $n\to\infty$ (see
Theorem \ref{th:lim(r)} in the \hyperref[app]{Appendix}).

The result of Theorem \ref{th:main1} for ``pure'' measures $\PP_n^r$
readily extends to mixed measures.
\begin{theorem}\label{th:main2}
Let  $\rho$ be a probability measure on  $(0,\infty)$,
and set
%
\begin{equation}\label{eq:rho}
\PP_n^\rho(\varGamma):=\int_0^\infty
\PP_n^r(\varGamma) \rho(\dif r), \qquad\varGamma\in\CP_n.
\end{equation}
Then, for any  $\varepsilon>0$,
\[
\lim_{n\to\infty}\PP^\rho_n \{d_{\mathcal
T}(\tilde\varGamma_n,\gamma^*)\le\varepsilon\}=1.
\]
\end{theorem}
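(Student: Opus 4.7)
The plan is to reduce Theorem \ref{th:main2} directly to Theorem \ref{th:main1} by integrating in $r$ and applying the dominated convergence theorem. First I would use the definition (\ref{eq:rho}) together with the fact that, for each fixed $n$, the set $\CP_n$ is finite, so that the finite sum over $\varGamma\in\CP_n$ can be interchanged with the integral over $(0,\infty)$ by Tonelli's theorem. Setting $A_{n,\varepsilon}:=\{\varGamma\in\CP_n:d_{\mathcal T}(\tilde\varGamma_n,\gamma^*)\le\varepsilon\}$, this gives
\begin{equation*}
\PP_n^\rho(A_{n,\varepsilon})=\sum_{\varGamma\in A_{n,\varepsilon}}\int_0^\infty \PP_n^r(\varGamma)\,\rho(\dif{}r)=\int_0^\infty \PP_n^r(A_{n,\varepsilon})\,\rho(\dif{}r).
\end{equation*}

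Next I would invoke Theorem \ref{th:main1}, which asserts that for every fixed $r\in(0,\infty)$ the integrand $f_n(r):=\PP_n^r(A_{n,\varepsilon})$ satisfies $f_n(r)\to 1$ as $n\to\infty$. Since $0\le f_n(r)\le 1$ and $\rho$ is a probability measure on $(0,\infty)$, the constant function $1$ is an integrable majorant. The dominated convergence theorem therefore yields
\begin{equation*}
\lim_{n\to\infty}\PP_n^\rho(A_{n,\varepsilon})=\int_0^\infty\lim_{n\to\infty}f_n(r)\,\rho(\dif{}r)=\int_0^\infty 1\cdot\rho(\dif{}r)=1,
\end{equation*}
which is the desired statement.

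There is no genuine obstacle here: the work is entirely in Theorem \ref{th:main1}, and the mixture result is a soft measure-theoretic corollary. The only minor point worth recording is that the pointwise convergence in $r$ furnished by Theorem \ref{th:main1} is all that is needed, so no uniformity of $f_n(r)$ in $r$ has to be established\,---\,the boundedness of probabilities supplies the domination automatically.
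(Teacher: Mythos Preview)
Your proof is correct and follows exactly the paper's own approach: the paper's proof is the one-line remark that the result follows from equation (\ref{eq:rho}) and Theorem \ref{th:main1} by Lebesgue's dominated convergence theorem, which is precisely what you have spelled out in detail.
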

\begin{pf}
The proof follows from equation (\ref{eq:rho}) and Theorem \ref{th:main1} by
Lebesgue's dominated convergence theorem.
\end{pf}

Theorem \ref{th:main2} shows that the limit shape result holds true
(with the same limit $\gamma^*$) when the parameter $r$ specifying
the distribution $\PP_n^{r}$\vadjust{\goodbreak} is chosen at random. Using the
terminology designed for settings with random environments, Theorems
\ref{th:main1} and \ref{th:main2} may be interpreted as ``quenched''
and ``annealed'' statements, respectively.
\begin{remark}
The universality of the limit shape $\gamma^*$, established in
Theorem~\ref{th:main1}, is not a general rule but rather an
exception, holding for some, but not all, probability measures on
the polygonal space $\CP_n$. In fact, as was\vspace*{1pt} shown by Bogachev and
Zarbaliev \cite{BZ2,BZ2a}, \textit{any} $C^3$-smooth, strictly
convex curve $\gamma$ started at the origin may appear as the limit
shape with respect to \textit{some} probability measure
$\PP_n^\gamma$ on $\CP_n$, as $n\to\infty$.
\end{remark}
\begin{remark}
The main results of the present paper have been recently reported
(without proofs) in a brief note \cite{BZ3}.
\end{remark}

\subsection{Methods}\label{sec1.3}

Our proof of Theorem \ref{th:main1} employs an elegant probabilistic
approach first applied to convex lattice polygonal lines by Sinai
\cite{S}. This method is based on randomization of the (fixed) right
endpoint $\xi=n$ of polygonal lines $\varGamma\in\CP_n$, leading to
the interpretation of a given (e.g., uniform) measure $\PP_n$ on
$\CP_n$ as the conditional distribution induced by a~suitable
probability measure $\QQ_z$ [depending on an auxiliary ``free''
parameter $z=(z_1,z_2)$] defined on the ``global'' space
$\CP=\bigcup_n\CP_n$ of \textit{all} convex lattice polygonal lines (with
finitely many edges). To make the measure $\QQ_z$ closer to $\PP_n$
on the subspace $\CP_n\subset\CP$ specified by the condition
$\xi=n$, it is natural to pick the parameter $z$ from the asymptotic
equation $\EE_z(\xi)=n(1+o(1))$ ($n\to\infty$). Then, in
principle, asymptotic properties of polygonal lines $\varGamma$
(e.g., the limit shape) can be established first for $(\CP,\QQ_z)$
and then transferred to $(\CP_n,\PP_{n})$ via conditioning with
respect to $\CP_n$ and using an appropriate local limit theorem for
the probability $\QQ_z\{\xi=n\}$. A great advantage of working with
the measure $\QQ_z$ is that it may be chosen as a ``multiplicative
statistic'' \cite{V3,V4} (i.e., a direct product of one-dimensional
probability measures), thus corresponding to the distribution of a
sequence of independent random variables, which immediately brings
in insights and well-developed analytical tools of probability
theory.

Sinai's approach in \cite{S} was motivated by a heuristic analogy
with statistical mechanics, where similar ideas are well known in
the context of asymptotic equivalence, in the thermodynamic limit,
of various statistical ensembles (i.e., \textit{microcanonical},
\textit{canonical} and \textit{grand canonical}) that may be
associated with a given physical system (e.g., gas) by optional
fixing of the total energy and/or the number of particles (see
Ruelle \cite{Ruelle}). In particular, Khinchin \cite{Kh1,Kh2} has
pioneered a systematic use of local limit theorems of probability
theory in problems of statistical mechanics. Deep connections
between statistical properties of quantum systems (where discrete
random structures naturally arise due to quantization) and
asymptotic theory of random integer partitions are discussed in
a~series of papers\vadjust{\goodbreak} by Vershik \cite{V3,V4} (see also the recent work
by Comtet et al. \cite{Comtet} and further references therein).
Note also that a general idea of randomization has proved
instrumental in a large variety of combinatorial problems (see,
e.g., \cite{AT,ABT,FG,FY,Fristedt,Kolchin,Pitman,V2} and the vast
bibliography therein).

The probabilistic method is very insightful and efficient, as it
makes the arguments heuristically transparent and natural. However,
the practical implementation of this approach requires substantial
work, especially in the two-dimensional context of convex lattice
polygonal lines as compared to the one-dimensional case exemplified
by integer partitions and the corresponding Young diagrams
\cite{V2,V3}. To begin with, evaluation of expected values and some
higher-order statistical moments of random polygonal lines leads one
to deal with various sums over the set $\calX$ of points
$x=(x_1,x_2)\in\ZZ^2_+$ with co-prime coordinates (see
Section \ref{sec2.1}). Sinai \cite{S} was able to obtain the limit
of some basic sums of such a kind by appealing to the known
asymptotic density of the set $\calX$ in $\ZZ^2_+$ (given by
$6/\pi^2$); however, this argumentation is insufficient for more
refined asymptotics. In the present paper, we handle this technical
problem by using the M\"{o}bius inversion formula (see
Section~\ref{sec3}), which enables one to reduce sums over $\calX$
to more regular sums.

As already mentioned, another crucial ingredient required for the
probabilistic method is a suitable local limit theorem that
furnishes a ``bridge'' between the global distribution $\QQ_z$ and
the conditional one, $\PP_n$. Analytical difficulties encountered in
the proof of such a result are already significant in the case of
ordinary integer partitions (for more details and concrete examples,
see \cite{ABT,FG,FVY,FY,Fristedt} and further references therein).
The case of convex lattice polygonal lines, corresponding to
two-dimensional strict vector partitions (see
Remark \ref{rm:strict}), is notoriously tedious, even though the
standard method of characteristic functions is still applicable. To
the best of our knowledge, after the original paper by Sinai
\cite{S} where the result was just stated (with a minor error in the
determinant of the covariance matrix (\cite{S}, page 111)), full
details have not been worked out in the literature (however, see
\cite{Z}). We prove the following result in this direction (cf.
Theorem~\ref{th:LCLT}).
\begin{theorem}\label{th:intro-LCLT}
Suppose that the parameter $z$ is chosen so that
$a_z:=\EE^{r}_z(\xi)=n(1+o(1))$. Then, as
$n\to\infty$,
%
\begin{equation}\label{eq:Intro-LCLT}
\QQ^{r}_z\{\xi=n\}\sim\frac{1}{2\pi (\det K_z)^{1/2}}
\exp\biggl(-{{ \frac12}}|(n-a_z)
K_z^{-1/2}|^2\biggr),
\end{equation}
where $K_z:=\Cov(\xi,\xi)$ is the covariance matrix of the random
vector $\xi$ (with respect to the probability measure
$\QQ^{r}_{z}$).
\end{theorem}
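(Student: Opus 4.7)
The plan is to prove the local CLT (\ref{eq:Intro-LCLT}) by Fourier inversion. Under $\QQ_z^r$ the decomposition of $\varGamma\in\CP$ into its edges identifies the polygonal line with the collection of multiplicities $(k_x)_{x\in\calX}$ indexed by the primitive direction vectors, and the generating identity $\sum_{k\ge 0} b_k^r u^k = (1-u)^{-r}$, combined with the product structure of $\QQ_z^r$ as a multiplicative statistic, makes the $k_x$ independent negative binomials with parameters $r$ and $z^x := z_1^{x_1} z_2^{x_2}$. Hence $\xi = \sum_{x\in\calX} k_x\myp x$ has characteristic function
\begin{equation*}
\phi_\xi(t) = \prod_{x\in\calX}\left(\frac{1-z^x}{1-z^x\mypp e^{i\langle t,x\rangle}}\right)^{\!r}, \qquad t\in[-\pi,\pi]^2,
\end{equation*}
and the inversion formula $\QQ_z^r\{\xi=n\} = (2\pi)^{-2}\int_{[-\pi,\pi]^2} \phi_\xi(t)\myp e^{-i\langle t,n\rangle}\,\dif t$ is the starting point.

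First I would center by $a_z$ and apply the linear change of variables $t = K_z^{-1/2}u$ so that the target Gaussian has identity covariance. I would then partition the $u$-integral into three regimes. On a small ball $|u|\le\delta_n$, a Taylor expansion of $\log\phi_\xi(t) - i\langle t,a_z\rangle$ yields $-\tfrac12|u|^2$ plus a remainder controlled by a uniform bound on the third cumulants of $\xi$; dominated convergence then delivers the Gaussian factor on the right-hand side of (\ref{eq:Intro-LCLT}). On an intermediate annulus, the termwise convexity estimate $|1-z^x e^{i\theta}|^2 \ge (1-z^x)^2 + c\myp z^x\theta^2$ for $\theta\in[-\pi,\pi]$, applied for each $x\in\calX$, produces a Gaussian majorant $|\phi_\xi(t)|\le \exp(-c'\langle t,K_z t\rangle)$ whose integral is negligible relative to $(\det K_z)^{-1/2}$. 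On the remaining far region, a Diophantine argument must rule out simultaneous near-resonance $\langle t,x\rangle\in 2\pi\myp\ZZ$ for many $x\in\calX$; already the terms indexed by $x=(1,0)$ and $x=(0,1)$ confine the bad set to a neighborhood of the origin, so the contribution decays exponentially.

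The main obstacle is the precise asymptotic control of the moment sums $a_z$, $K_z$ and of the Taylor remainder, each of which has the form $\sum_{x\in\calX} f(z^x)\myp g(x)$ restricted to coprime $x$; the irregular distribution of $\calX$ within $\ZZ_+^2$ blocks a direct Euler--Maclaurin analysis. I would remove the coprimality constraint by M\"obius inversion
\begin{equation*}
\sum_{x\in\calX} h(x) = \sum_{d=1}^\infty \mu(d)\sum_{y\in\ZZ_+^2\setminus\{0\}} h(dy),
\end{equation*}
reducing every such sum to a double sum over the full lattice that is amenable to Mellin-transform and Euler--Maclaurin techniques, with the $d$-summation producing factors involving $1/\zeta(\cdot)$; the required contour deformations rely on information about zeroes of the Riemann zeta function, as flagged in the abstract. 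This machinery both fixes the exact scaling of the Gaussian in (\ref{eq:Intro-LCLT}) and supplies the uniform estimates needed to show that the intermediate and far contributions to the Fourier integral are indeed $o\bigl((\det K_z)^{-1/2}\bigr)$, leaving only the small-ball Gaussian asymptotic asserted in the theorem.
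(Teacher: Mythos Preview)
Your overall architecture---Fourier inversion, centering by $a_z$, the change of variables $t=K_z^{-1/2}u$, and a three-regime split---matches the paper's proof (Theorem~\ref{th:LCLT} and the lemmas of Section~\ref{sec7}), and your use of M\"obius inversion to handle sums over $\calX$ is exactly what the paper does in Sections~\ref{sec3} and~\ref{sec6}. But the far-region step has a genuine gap. You claim that the two terms $x=(1,0)$ and $x=(0,1)$ ``confine the bad set to a neighborhood of the origin, so the contribution decays exponentially.'' The first half is true, but the conclusion does not follow: on the region where, say, $\eta\myp\alpha_1\le|t_1|\le\pi$, the single factor for $x=(1,0)$ gives only
\[
\bigl|\varphi_{\nu(1,0)}(t_1)\bigr|=\frac{(1-z_1)^r}{\bigl|1-z_1\e^{it_1}\bigr|^{r}}
\asymp\biggl(\frac{\alpha_1}{\sqrt{\alpha_1^2+t_1^2}}\biggr)^{\!r},
\]
which for $|t_1|\asymp\alpha_1$ is a constant strictly below $1$, not $o(1)$ as $n\to\infty$. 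Integrating a constant over a region of area $\asymp 1$ in $t$ yields $O(1)$, hopelessly larger than the required $o\bigl((\det K_z)^{-1/2}\bigr)=o\bigl(|n|^{-4/3}\bigr)$. The paper's fix (Lemma~\ref{lm:7.3} and the estimate of $I_3$ in Section~\ref{sec7.5}) is to retain an \emph{infinite} one-parameter family of primitive vectors, namely $(x_1,1)_{x_1\ge 0}$ (or $(1,x_2)_{x_2\ge0}$), sum the resulting geometric series explicitly, and obtain $J_\alpha(\lambda)\ge C\alpha_1^{-1}\asymp|n|^{1/3}$, whence $|\varphi_\xi(t)|\le\exp\bigl(-c\,|n|^{1/3}\bigr)$. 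No finite collection of $x$ can produce this.

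Two smaller points. Your intermediate-annulus majorant $|\phi_\xi(t)|\le\exp\bigl(-c'\langle t,K_z t\rangle\bigr)$ rests on $1-\cos\theta\ge c\mypp\theta^2$, valid only for $|\theta|\le\pi$; since $\theta=\langle t,x\rangle$ is unbounded over $x\in\calX$, the bound as written is false. It can be salvaged on an annulus of outer radius $|u|\lesssim|n|^{1/3}$ (where the dominant $x$, those with $|x|\lesssim\alpha^{-1}$, satisfy $|\langle t,x\rangle|=O(1)$), which is precisely the Lyapunov radius $L_z^{-1}$ the paper uses via Lemmas~\ref{lm:7.1}--\ref{lm:7.2}. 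And the zeta-zero input is not used in the Fourier estimates at all; it enters only through Theorem~\ref{th:5.1} to sharpen $a_z-n=o\bigl(|n|^{2/3}\bigr)$, which is what makes the exponential in~\eqref{eq:Intro-LCLT} asymptotic to $1$ so that one genuinely has $\sim$ rather than merely an additive $O\bigl(|n|^{-5/3}\bigr)$ error (cf.\ Corollary~\ref{cor:Q}).
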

\begin{remark}
The quantities $a_z$ and $K_z$, obtained via the measure $\QQ_z^r$,
depend in general on the parameter $r$ as well. For the sake of
notational convenience the latter is omitted, which should cause no
confusion since $r$ is always fixed, unless stated explicitly
otherwise.
\end{remark}

One can\vspace*{1pt} show that the covariance matrix $K_z$ is of the order of
$|n|^{4/3}$, and in particular $\det K_z\sim\const
(n_1n_2)^{4/3}$ and
$\|K_z^{-1/2}\|=O(|n|^{-2/3})$. From the
right-hand side of (\ref{eq:Intro-LCLT}), it is then clear that one
needs to refine the error term in the asymptotic relation
$\EE^{r}_z(\xi)=n(1+o(1))$ by estimating the deviation
$\EE^{r}_z(\xi)-n$ to at least the order of $|n|^{2/3}$. We have
been able to obtain the following estimate (cf.
Theorem \ref{th:5.1}).
\begin{theorem}\label{th:intro-a}
Under the conditions of Theorem
\ref{th:intro-LCLT},
%
\begin{equation}\label{eq:intro-a}
\EE^{r}_z(\xi)=n+o(|n|^{2/3}),\qquad  n\to\infty.
\end{equation}
\end{theorem}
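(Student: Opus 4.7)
The plan is to compute $\EE^{r}_z(\xi)$ as an explicit function of $z$ and extract its asymptotics as $z\to(1,1)$ with precision sufficient to verify (\ref{eq:intro-a}). By the ``multiplicative statistic'' structure of $\QQ^{r}_z$, the numbers of edges in distinct primitive directions $x\in\calX$ are independent, so the expectation factorizes over $\calX$, yielding
\begin{equation*}
\EE^{r}_z(\xi) \;=\; r\sum_{x\in\calX}\frac{x\mypp z^x}{1-z^x},\qquad z^x:=z_1^{x_1}z_2^{x_2}.
\end{equation*}
I would parameterize $z_i=\e^{-s_i}$ with $s=(s_1,s_2)\to 0$ and study the small-$s$ expansion of this sum.

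The key analytic step combines the Möbius inversion formula (Section~\ref{sec3}) with Mellin-transform techniques. Writing $\sum_{x\in\calX}(\cdot)=\sum_{d\ge 1}\mu(d)\sum_{w\ne 0}(\cdot)\big|_{x=dw}$, expanding $z^x/(1-z^x)=\sum_{k\ge 1}z^{kx}$, and regrouping by $m=dk$ converts the expression into a Dirichlet series amenable to Mellin analysis. The Mellin transforms of the resulting kernels factorize through Euler's $\Gamma$-function and values of the Riemann zeta function, while the outer Möbius sum contributes a factor $\sum_d\mu(d)/d^s=1/\zeta(s)$. Shifting the Mellin contour past the leading singularities produces a quantitative expansion
\begin{equation*}
\EE^{r}_z(\xi) \;=\; A(s)+R(s),
\end{equation*}
with an explicit leading profile $A(s)\asymp |s|^{-3}$ and a remainder $R(s)$ of strictly lower order in $|s|^{-1}$.

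Under the standing assumption $c_1\le n_2/n_1\le c_2$, matching $A(s)$ with $n$ gives $s_1\asymp s_2\asymp |n|^{-1/3}$. I would then fix $z=z_n$ by solving $A(s)=n$ exactly (this is possible because $A$ is smooth with non-degenerate Jacobian at the leading order, so the implicit function theorem applies on a neighborhood of the leading-order solution), reducing the claim (\ref{eq:intro-a}) to the bound $R(s)=o(|s|^{-2})=o(|n|^{2/3})$.

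The main obstacle, as I see it, is precisely this remainder estimate. A naive contour shift past the next pole produces only $R(s)=O(|s|^{-2})$, which is just at the borderline of what is required. Improving this to $o(|s|^{-2})$ forces the contour to be pushed further, past the line $\Re\sigma=1$ on which the factor $1/\zeta(\sigma+\mathrm{i}t)$ could in principle encounter zeros of $\zeta$. The classical zero-free region of the Riemann zeta function (flagged in the abstract) is exactly what permits this additional shift, and, combined with polynomial-in-$|t|$ bounds on $|\zeta(\sigma+\mathrm{i}t)|^{-1}$ needed to control the tails of the Mellin inversion integral, should yield the required $o$-improvement over the naive bound.
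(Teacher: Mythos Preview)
Your proposal is correct and follows essentially the same route as the paper's proof (Section~\ref{sec5}): M\"obius inversion converts the sum over $\calX$ to a full lattice sum, the Mellin representation introduces the factor $\zeta(s+1)/\zeta(s)$, and the de~la~Vall\'ee~Poussin zero-free region together with the bound $1/\zeta(\sigma+it)=O(\ln|t|)$ permits shifting the contour just inside the critical strip to extract the $o(|n|^{2/3})$ improvement. The only cosmetic difference is that the paper calibrates $z$ by the explicit formula~(\ref{delta12}) (which makes the leading Mellin residue equal to $n$ exactly) rather than invoking the implicit function theorem.
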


The proof of this result is quite involved. The main idea is to
apply the Mellin transform and use the inversion formula to obtain a
suitable integral representation for the difference $\EE^{r}_z-n$ of
the form ($j=1,2$)
%
\begin{equation}\label{eq:intro-M}\quad
\EE^{r}_z(\xi_j)-n_j=\frac{r}{2\pi i}\int_{c-i\infty}^{c+i\infty}
\frac{\widetilde{F}_j(s)\zeta(s+1)}{(-\ln
z_j)^{s+1}\zeta(s)} \,\dif s\qquad (1<c<2),
\end{equation}
where $\ln z_j$ happens to be of the order of $|n|^{-1/3}$
(according to the ``optimal'' choice of $z$ as explained at the
beginning of Section~\ref{sec1.3}; cf. Theorem~\ref{th:delta12}),$\widetilde{F}_j(s)$
is an explicit function analytic in the
strip ${1<\Re s<2}$ and~$\zeta(s)$ is the Riemann zeta function. As
usual, to obtain a better estimate of the integral one has to shift
the integration contour in (\ref{eq:intro-M}) as far to the left as
possible, and it turns out that to get an estimate of order
$o(|n|^{2/3})$ one needs to enter the critical strip
\mbox{$0<\Re s<1$}, which requires information about zeroes of the zeta
function in view of the denominator $\zeta(s)$ in
(\ref{eq:intro-M}).

\subsubsection*{Layout}

The rest of the paper is organized as follows. In
Section \ref{sec2}, we explain the basics of the probability method
in the polygonal context and define the parametric families of
measures $\QQ_z^r$ and $\PP_n^r$  ($0<r<\infty$). In
Section~\ref{sec3}, we choose suitable values of the parameter
$z=(z_1,z_2)$ (Theorem~\ref{th:delta12}), which implies convergence
of ``expected'' polygonal lines to the limit curve~$\gamma^*$
(Section \ref{sec4}, Theorems \ref{th:3.2} and \ref{th:8.1.1a}). The
refined error estimate (\ref{eq:intro-a}) is proved in
Section \ref{sec5} (Theorem \ref{th:5.1}). Higher-order moment sums
are analyzed in Section~\ref{sec6}; in particular, the
asymptotics of the covariance matrix $K_z$ is obtained in
Theorem \ref{th:K}. Section \ref{sec7} is devoted to the proof of
the local central limit theorem (Theorem \ref{th:LCLT}). Finally,
the limit shape result, with respect to both~$\QQ_z^r$ and
$\PP_n^r$, is proved in Section \ref{sec8} (Theorems \ref{th:8.2}
and~\ref{th:8.2a}). The \hyperref[app]{Appendix} includes necessary details about
the tangential parameterization and the tangential metric
$d_{\mathcal T}$ on the space of convex paths (Section~\ref{A-1}),
as well as a~discussion of the total variation distance between the
measures~$\PP_n^r$ ($r\ne1$) and the uniform distribution $\PP_n^1$
(Section~\ref{A-2}, Theorems~\ref{th:Vasser} and~\ref{th:lim(r)}).

\subsubsection*{Notation}

Let us fix some general notation frequently used in the paper. For
a row-vector $x=(x_1,x_2)\in\RR^2$, its Euclidean norm (length) is
denoted by $|x|:=(x_1^2+x_2^2)^{1/2}$, and\vadjust{\goodbreak} $\langle
x,y\rangle:=x y^{\topp
}=x_1y_1+x_2y_2$ is the
corresponding inner product of vectors $x,y\in\RR^2$. We denote
$\ZZ_+:=\{k\in\ZZ\dvtx k\ge0\}$,  $\ZZ_+^2:=\ZZ_+\times\ZZ_+$,
and similarly $\RR_+:=\{x\in\RR\dvtx x\ge0\}$,
$\RR_+^2:=\RR_+\times\RR_+$.

\section{Probability measures on spaces of convex polygonal lines}
\label{sec2}

\subsection{Encoding}\label{sec2.1}

As was observed by Sinai \cite{S}, one can encode convex lattice
polygonal lines via suitable integer-valued functions. More
specifically, consider the set $\calX$ of all pairs of co-prime
nonnegative integers,
%
\begin{equation}\label{eq:X}
\calX:=\{x=(x_1,x_2)\in\ZZ^2_+\dvtx \gcd(x_1,x_2)=1\},
\end{equation}
where $\gcd(\cdot,\cdot)$ stands for the greatest common divisor of
two integers. [In particular, the pairs $(0,1)$ and $(1,0)$ are
included in this set, while $(0,0)$ is not.] Let $\varPhi:=
(\ZZ_+)^{\calX}$ be the space of functions on $\calX$ with
nonnegative integer values, and consider the subspace of functions
with \textit{finite support},
\[
\varPhi_0 :=\{\nu\in\varPhi\dvtx \#(\supp\nu)<\infty\},
\]
where $ \supp\nu:=\{x\in\calX\dvtx \nu(x)>0\}$. It is
easy to see
that the space $\varPhi_0$ is in one-to-one correspondence with the
space $\CP=\bigcup_{n\in\ZZ_+^2}\CP_n$ of all (finite) convex lattice
polygonal lines
\[
\varPhi_0\ni\nu \quad\longleftrightarrow\quad \varGamma\in\CP.
\]
Indeed, let us interpret points $x\in\calX$ as radius-vectors
(pointing from the origin to~$x$). Now, for any $\nu\in\varPhi_0$,
a finite collection of nonzero vectors $\{x\nu(x),
x\in\supp\nu\}$, arranged in the order of increase of their
slope $x_2/x_1\in[0,\infty]$, determines consecutive edges of some
convex lattice polygonal line $\varGamma\in\CP$. Conversely, vector
edges of a lattice polygonal line $\varGamma\in\CP$ can be uniquely
represented in the form $xk$, with $x\in\calX$ and integer $k>0$;
setting $\nu(x):=k$ for such $x$ and zero otherwise, we obtain a
function $\nu\in\varPhi_0$. [The special case where $\nu(x)\equiv0$
for all $x\in\calX$ corresponds to the ``trivial'' polygonal line
$\varGamma_0$ with coinciding endpoints.]

That is to say, each $x\in\calX$ determines the \textit{direction} of
a potential edge, only utilized if $x\in\supp\nu$, in
which case
the value $\nu(x)>0$ specifies the \textit{scaling factor}, altogether
yielding a vector edge $x\nu(x)$; finally, assembling
all such
edges into a polygonal line is uniquely determined by the fixation
of the starting point (at the origin) and the convexity property.

Note that, according to the above construction, $\nu(x)$ has the
meaning of the number of lattice points on the edge $x\nu(x)$
(except its left endpoint). The right endpoint $\xi=\xi_\varGamma$
of the polygonal line $\varGamma\in\CP$ associated with a~configuration $\nu\in\varPhi_0$ is expressed by the formula
%
\begin{equation}\label{eq:xi}
\xi=\sum_{x\in\calX} x\nu(x).
\end{equation}

In what follows, we shall identify the spaces $\CP$ and $\varPhi_0$.
In particular, any probability measure on $\CP$ can be treated as
the distribution of a $\ZZ_+$-valued random field $\nu(\cdot)$ on
$\calX$ with almost surely (a.s.) finite support.

\subsection{Global measure  $\QQ_{z}$ and conditional
measure  $\PP_n$}\label{sec2.2}

Let $b_0,b_1,b_2,\ldots$ be a sequence of nonnegative numbers such
that $b_0>0$ (without loss of generality, we put $b_0=1$) and not
all $b_k$ vanish for $k\ge1$, and assume that the generating
function
%
\begin{equation}\label{eq:B}
\beta(s):=\sum_{k=0}^\infty b_k s^k
\end{equation}
is finite for $|s|<1$. Let $z=(z_1,z_2)$ be a two-dimensional
parameter, with $z_1,z_2\in(0,1)$. Throughout the paper, we shall
use the multi-index notation
\[
z^{x}:=z_1^{x_1}z_2^{x_2},\qquad  x=(x_1,x_2)\in\ZZ_+^2.
\]

We now define the ``global'' probability measure $\QQ_{z}$ on the
space $\varPhi=(\ZZ_+)^{\calX}$ as the distribution of a random
field $\nu=\{\nu(x),  x\in\calX\}$ with mutually independent
values and marginal distributions of the form
%
\begin{equation}\label{Q}
\QQ_{z}\{\nu(x)=k\}=\frac{b_k z^{kx}}{\beta(z^x)} ,\qquad
k\in\ZZ_+\  (x\in\calX).
\end{equation}

\begin{proposition}\label{pr:F0}
For each $z\in(0,1)^2$, the condition
%
\begin{equation}\label{N}
\tilde\beta(z):=\prod_{x\in\calX}\beta(z^{x})<\infty
\end{equation}
is necessary and sufficient in order that $\QQ_{z}(\varPhi_0)=1$.
\end{proposition}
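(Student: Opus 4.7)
The plan is to recognize that $\QQ_z(\varPhi_0)=1$ is exactly a statement of the form ``only finitely many of a sequence of independent events occur,'' which is the setting of the Borel--Cantelli lemmas, and then to reduce the resulting series criterion to the product criterion of the proposition by an elementary $\log$-versus-linear comparison.

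First I would reformulate the event. By definition, $\nu \in \varPhi_0$ means that the random set $\supp\nu = \{x \in \calX : \nu(x) > 0\}$ is finite, i.e.\ the events $A_x := \{\nu(x) > 0\}$ occur for only finitely many $x$. Since $\calX$ is countable and, under $\QQ_z$, the variables $\{\nu(x)\}_{x \in \calX}$ are independent (so the events $A_x$ are mutually independent), both halves of the Borel--Cantelli lemma apply. Therefore
\[
\QQ_z(\varPhi_0) = 1 \iff \sum_{x \in \calX} \QQ_z(A_x) < \infty.
\]
From the marginals (\ref{Q}) together with the normalization $b_0=1$, I would compute
\[
\QQ_z(A_x) = 1 - \QQ_z\{\nu(x)=0\} = 1 - \frac{1}{\beta(z^x)} = \frac{\beta(z^x)-1}{\beta(z^x)},
\]
which is well-defined because $\beta(z^x)$ is finite for $z^x \in (0,1)$ by the hypothesis on the radius of convergence of $\beta$.

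Next I would connect the series $\sum_x \QQ_z(A_x)$ to the product $\tilde\beta(z) = \prod_x \beta(z^x)$. Writing $p_x := \QQ_z(A_x)$, we have $\ln\beta(z^x) = -\ln(1 - p_x)$, hence
\[
\ln \tilde\beta(z) = \sum_{x \in \calX} \ln\beta(z^x) = -\sum_{x \in \calX}\ln(1-p_x).
\]
Since $-\ln(1-t) \geq t$ for $t \in [0,1)$, finiteness of the right-hand side already implies $\sum_x p_x < \infty$. For the converse, I would observe that as $|x| \to \infty$ we have $z^x = z_1^{x_1}z_2^{x_2} \to 0$ (because $z_1,z_2 \in (0,1)$ and $x \in \calX \subset \ZZ_+^2 \setminus\{0\}$), so by continuity $\beta(z^x) \to b_0 = 1$ and thus $p_x \to 0$. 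Then the elementary equivalence $-\ln(1-p_x) \sim p_x$ as $p_x \to 0$ makes the series $\sum_x p_x$ and $\sum_x \ln\beta(z^x)$ simultaneously convergent or divergent. Combining this with the Borel--Cantelli equivalence yields
\[
\QQ_z(\varPhi_0)=1 \iff \sum_x p_x < \infty \iff \ln\tilde\beta(z) < \infty \iff \tilde\beta(z)<\infty.
\]

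The potential obstacles are very mild. The only non-routine point is verifying that $p_x \to 0$ (equivalently, $z^x \to 0$) uniformly enough along $\calX$ to license the asymptotic comparison $-\ln(1-p_x)\sim p_x$; I would dispatch this by noting that for any $\varepsilon>0$ there are only finitely many $x \in \calX$ with $p_x \geq \varepsilon$, since $z^x \geq \delta$ forces $x$ into a finite set. Everything else is bookkeeping: independence (built into the definition of $\QQ_z$), the countability of $\calX$, and the standard Borel--Cantelli dichotomy.
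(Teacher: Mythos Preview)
Your proposal is correct and follows essentially the same route as the paper: Borel--Cantelli applied to the independent events $A_x=\{\nu(x)>0\}$, with $\QQ_z(A_x)=1-\beta(z^x)^{-1}$, followed by the standard equivalence between convergence of $\sum_x(1-\beta(z^x)^{-1})$ and finiteness of $\prod_x\beta(z^x)$. The paper simply asserts this last equivalence in one line, whereas you spell out the $-\ln(1-p_x)\sim p_x$ comparison; either way the argument is the same.
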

\begin{pf}
According to (\ref{Q}), $\QQ_{z}\{\nu(x)>0\}=1-\beta(z^x)^{-1}$
($x\in\calX$). Since the random variables $\nu(x)$ are mutually
in\-de\-pendent for different $x\in\calX$, Borel--Cantelli's lemma
implies that $\QQ_{z}\{\nu\in\varPhi_0\}=1$ if and only if
\[
\sum_{x\in\calX}\biggl(1-\frac{1}{\beta(z^x)}\biggr)<\infty.
\]
In turn, the latter inequality is equivalent to (\ref{N}).
\end{pf}

That is to say, under condition (\ref{N}) a sample configuration of
the random field $\nu(\cdot)$ belongs ($\QQ_{z}$-a.s.) to the space
$\varPhi_0$ and therefore determines a~(random) finite polygonal
line $\varGamma\in\CP$. By the mutual independence of the values
$\nu(x)$, the corresponding $\QQ_z$-probability is given by
%
\begin{equation}\label{Q1}
\QQ_{z}(\varGamma) =\prod_{x\in\calX}\frac{b_{\nu(x)}
z^{x\nu(x)}}{\beta(z^x)} =\frac{b(\varGamma)z^{\xi}}{\tilde
\beta(z)} ,\qquad \varGamma\in\CP,
\end{equation}
where $\xi=\sum_{x\in\calX} x\nu(x)$ is the right
endpoint of
$\varGamma$ [see (\ref{eq:xi})] and
%
\begin{equation}\label{eq:b}
b(\varGamma):=\prod_{x\in\calX} b_{\nu(x)}<\infty,\qquad
\varGamma\in\CP.
\end{equation}
Note that the infinite product in (\ref{eq:b}) contains only
finitely many terms different from $1$, since for
$x\notin\supp\nu$ we have $b_{\nu(x)}=b_0=1$. Hence,
expression~(\ref{eq:b}) can be rewritten in a more intrinsic form [cf.
(\ref{eq:b-Gamma})]
%
\begin{equation}\label{eq:b1}
b(\varGamma):=\prod_{e_i\in\varGamma} b_{k_i},
\end{equation}
where the product is taken over all edges $e_i$ of
$\varGamma\in\CP_n$, and $k_i$ is the number of lattice points on the
edge $e_i$ except its left endpoint (see Section \ref{sec2.1}).

In particular, for the trivial polygonal line
$\varGamma_0\leftrightarrow\nu\equiv0$ formula (\ref{Q1}) yields
\[
\QQ_z(\varGamma_0)=\tilde\beta(z)^{-1}>0.
\]
Note, however, that $\QQ_z(\varGamma_0)<1$ since, due to our
assumptions, (\ref{eq:B}) implies $\beta(s)>\beta(0)=1$ for $s>0$
and hence, according to (\ref{N}), $\tilde\beta(z)>1$.

On the subspace $\CP_{n}\subset\CP$ of polygonal lines with the
right endpoint fixed at $n=(n_1,n_2)$, the measure $\QQ_{z}$ induces
the conditional distribution
%
\begin{equation}\label{Pn}
\PP_n(\varGamma):=\QQ_{z}(\varGamma|\CP_n)
=\frac{\QQ_{z}(\varGamma)}{\QQ_{z}(\CP_n)} ,\qquad
\varGamma\in
\CP_n,
\end{equation}
provided, of course, that $\QQ_{z}(\CP_n)>0$ [i.e., there is at
least one $\varGamma\in\CP_n$ with $b(\varGamma)>0$, cf.
(\ref{Q1})]. The parameter $z$ may be dropped from the notation for~$\PP_n$ due to the following fact.
\begin{proposition}\label{pr:noz}
The measure  $\PP_n$ in  (\ref{Pn}) does not depend on
$z$.
\end{proposition}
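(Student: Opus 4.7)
The plan is to substitute the explicit product form of $\QQ_z(\varGamma)$ from formula~(\ref{Q1}) into the defining ratio (\ref{Pn}) and exploit the fact that the right endpoint $\xi$ is fixed on $\CP_n$. Specifically, for every $\varGamma\in\CP_n$ one has $\xi_\varGamma=n$, so (\ref{Q1}) reads
\begin{equation*}
\QQ_z(\varGamma)=\frac{b(\varGamma)\myp z^{n}}{\tilde\beta(z)}\mypp,\qquad\varGamma\in\CP_n\myp.
\end{equation*}
Summing over $\varGamma\in\CP_n$ pulls the $z$-dependent factor $z^n/\tilde\beta(z)$ outside, yielding
\begin{equation*}
\QQ_z(\CP_n)=\frac{z^{n}}{\tilde\beta(z)}\sum_{\varGamma\in\CP_n} b(\varGamma).
\end{equation*}

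The ratio in (\ref{Pn}) therefore simplifies, after cancellation of the common factor $z^{n}/\tilde\beta(z)$, to
\begin{equation*}
\PP_n(\varGamma)=\frac{b(\varGamma)}{\sum_{\varGamma'\in\CP_n} b(\varGamma')}\mypp,
\end{equation*}
which manifestly does not involve $z$. This is exactly the combinatorial weighting (\ref{eq:P-r})--(\ref{eq:b-Gamma}) (with $b(\varGamma)$ given by (\ref{eq:b1})) for the choice $b_k=b_k^r$, which is how the family $\PP_n^r$ will be defined once the sequence $\{b_k\}$ is specialized.

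There is no real obstacle here: the cancellation is purely algebraic and relies only on the product structure of $\QQ_z$ together with the constraint $\xi=n$ on $\CP_n$. The only mild point worth flagging is the standing assumption $\QQ_z(\CP_n)>0$, which guarantees that the conditional probability is well defined and that $\sum_{\varGamma\in\CP_n} b(\varGamma)>0$, so that the final expression for $\PP_n$ makes sense. No analytic input (convergence of $\tilde\beta(z)$ notwithstanding) enters beyond what is already needed to make $\QQ_z$ a probability measure on $\varPhi_0$ via Proposition~\ref{pr:F0}.
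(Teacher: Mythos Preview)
Your proof is correct and follows essentially the same approach as the paper: specialize formula~(\ref{Q1}) to $\varGamma\in\CP_n$ so that $\xi=n$, sum to compute $\QQ_z(\CP_n)$, and cancel the common factor $z^n/\tilde\beta(z)$ in the ratio~(\ref{Pn}) to arrive at the $z$-free expression (\ref{condP}). Your additional remarks on the standing assumption $\QQ_z(\CP_n)>0$ and the link to the family $\PP_n^r$ are accurate but not needed for the proposition itself.
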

\begin{pf}
If $\CP_n\ni\varGamma\leftrightarrow\nu\in\varPhi_0$ then $\xi=n$
and hence formula (\ref{Q1}) is reduced to
\[
\QQ_{z}(\varGamma)= \frac{b(\varGamma)z^n}{\tilde
\beta(z)} ,\qquad \varGamma\in\CP_n.
\]
Accordingly, using (\ref{N}) and (\ref{Pn}) we get the expression
%
\begin{equation}\label{condP}
\PP_n(\varGamma)=\frac{b(\varGamma)}{\sum_{\varGamma'
\in\CP_n}
b(\varGamma')} ,\qquad \varGamma\in\CP_n,
\end{equation}
which is $z$-free.
\end{pf}

\subsection{Parametric families  $\{\QQ_{z}^{r}\}$
and  $\{\PP_{n}^{r}\}$}\label{sec2.3}

Let us consider a special parametric family of measures
$\{\QQ_{z}^{r}, 0<r<\infty\}$, determined by formula (\ref{Q}) with
the coefficients $b_k$ of the form
%
\begin{equation}\label{b_k}
b_k^{r}:= \pmatrix{r+k-1\cr k}=\frac{r(r+1)\cdots(r+k-1)}{k!},
\qquad  k\in\ZZ_+
\end{equation}
[note that $b_0^{r}={r-1\choose0}=1$, in accordance with our
convention in Section \ref{sec2.2}].

By the binomial expansion formula, the generating function
(\ref{eq:B}) of the sequence (\ref{b_k}) is given by
%
\begin{equation}\label{G}
\beta^r(s)=(1-s)^{-r}, \qquad |s|<1,
\end{equation}
and from (\ref{Q}) it follows that under the law $\QQ^{r}_{z}$ the
random variable $\nu(x)$ has the probability generating function
%
\begin{equation}\label{Phi}
\EE^{r}_{z}(s^{\nu(x)})
=\frac{\beta^{r}(sz^x)}{\beta^{r}(z^x)}
=\frac{(1-z^x)^r}{(1-sz^x)^r} ,\qquad 0\le s\le1.
\end{equation}
Consequently, formula (\ref{Q}) specializes to
%
\begin{equation}\label{Qb}\quad
\QQ^r_{z}\{\nu(x)=k\}=\pmatrix{r+k-1\cr k} z^{kx}(1-z^x)^r,\qquad
k\in\ZZ_+\  (x\in\calX).
\end{equation}
That is to say, with respect to the measure $\QQ_{z}^{r}$ the random
variable $\nu(x)$ has a negative binomial distribution with
parameters $r$ and $p=1-z^x$ (\cite{F}, Section~VI.8, page 165); in
particular, its expected value and variance are given by (see
\cite{F}, Section XI.2, page 269)
%
\begin{equation}\label{eq:nu}
\EE_{z}^{r}[\nu(x)]=\frac{rz^x}{1-z^x} ,\qquad
\Var[\nu(x)]=\frac{rz^x}{(1-z^x)^2} \qquad (x\in\calX).
\end{equation}

According to formulas (\ref{Pn}) and (\ref{condP}), the
corresponding conditional measure
$\PP_n^{r}(\cdot):=\QQ^{r}_{z}(\cdot|\CP_n)$ is expressed as
%
\begin{equation}\label{condP1}
\PP^{r}_n(\varGamma)=\frac{\QQ^{r}_{z}(\varGamma)}{\QQ
^{r}_{z}(\CP_n)}
=\frac{b^{r}(\varGamma)}{\sum_{\varGamma'\in\CP
_n}
b^{r}(\varGamma')} ,\qquad \varGamma\in\CP_n,
\end{equation}
where $b^{r}(\varGamma)$ is given by the general formula
(\ref{eq:b1}) specialized to the coefficients $b_k^r$ defined in
(\ref{b_k}).

In the special\vspace*{1pt} case $r=1$, we have $b_k^{1}={k\choose k}\equiv1$ so
that (\ref{Qb}) is reduced to the geometric distribution (with
parameter $p=1-z^x$)
\[
\QQ_{z}^{1}\{\nu(x)=k\}= z^{kx} (1-z^x), \qquad  k\in\ZZ_+\
(x\in\calX),
\]
whereas the conditional measure (\ref{condP1}) specifies the uniform
distribution on~$\CP_n$ (cf. \cite{S})
\[
\PP_n^1(\varGamma)=\frac{1}{\#(\CP_n)} ,\qquad
\varGamma\in
\CP_n.
\]

\begin{remark}\label{rm:r}
Since $b_{k+1}^{r}/b_k^{r}=(r+k)/(k+1)$, the sequence $\{b_k^{r}\}$
is strictly increasing or decreasing in $k$ according as $r>1$ or
$r<1$, respectively. That is to say, the measures $\QQ_{z}^r$ and
$\PP_n^r$ encourage ($r>1$) or discourage ($r<1$) lattice points on
edges, as compared to the reference case $r=1$.
\end{remark}

It is easy to see that condition (\ref{N}) is satisfied and, by
Proposition \ref{pr:F0},
\[
\QQ_{z}^{r}(\varPhi_0)=1,\qquad 0<r<\infty.\vadjust{\goodbreak}
\]
Indeed, using (\ref{G}) we have
\[
\tilde\beta^r(z)=\prod_{x\in\calX}(1-z^x)^{-r}=
\exp\biggl(-r\sum_{x\in\calX}\ln(1-z^{x})
\biggr)<\infty,
\]
whenever $\sum_{x\in\calX}\ln(1-z^{x})>-\infty$,
and the latter
condition is fulfilled since
\[
\sum_{x\in\calX} z^{x}\le
\sum_{x\in\ZZ^2_+}z^{x}=\sum_{x_1=0}^\infty
z_1^{x_1}\sum_{x_2=0}^\infty
z_2^{x_2}=\frac{1}{(1-z_1)(1-z_2)}<\infty.
\]

\section{Calibration of the parameter $z$}
\label{sec3}

In what follows, the asymptotic notation of the form $a_n\asymp b_n$
[where $n=(n_1,n_2)$] means that
\[
0<\liminf_{n_1,
n_2\to\infty}\frac{a_n}{b_n}\le\limsup_{n_1,
n_2\to\infty}
\frac{a_n}{b_n}<\infty.
\]
We also use the standard notation $a_n\sim b_n$ for $a_n/b_n\to1$ as
$n_1,n_2\to\infty$.

Throughout the paper, we shall work under the following convention
about the limit $n=(n_1,n_2)\to\infty$.
\begin{assumption}\label{as:c}
The notation $n\to\infty$ signifies that $n_1,n_2\to\infty$ in such
a way that $n_1\asymp n_2$. In particular, this implies that
$|n|=(n_1^2+n_2^2)^{1/2}\to\infty$ as $n\to\infty$, and $n_1\asymp
|n|$, $n_2\asymp|n|$.
\end{assumption}

The goal of this section is to use the freedom of the conditional
distribution $\PP_n^r(\cdot)=\QQ_{z}^{r}(\cdot |
\CP_n)$ from
the parameter $z$ (see Proposition \ref{pr:noz}) in order to better
adapt the measure $\QQ_{z}^{r}$ to the subspace $\CP_n\subset\CP$
determined by the condition $\xi=n$ [where $\xi=(\xi_1,\xi_2)$ is
defined in (\ref{eq:xi})]. To this end, it is natural to require
that the latter condition be satisfied (at least asymptotically) for
the \textit{expected} value of $\xi$ (cf. \cite{S,BZ1}). More
precisely, we will seek $z=(z_1,z_2)$ as a solution to the following
asymptotic equations:
%
\begin{equation}\label{calibr1}
\EE_{z}^{r}(\xi_1)\sim n_1, \qquad\EE_{z}^{r}(\xi_2)\sim n_2\qquad
(n\to\infty),
\end{equation}
where $\EE_{z}^{r}$ denotes expectation with respect to the
distribution $\QQ_{z}^{r}$.

From (\ref{eq:xi}), using the first formula in (\ref{eq:nu}), we
obtain
%
\begin{equation}\label{E_i}
\EE_{z}^{r}(\xi)=\sum_{x\in\calX}x\frac{rz^x}{1-z^x}=
r\sum_{k=1}^\infty\sum_{x\in\calX} x z^{kx}.
\end{equation}
Let us represent the parameters $z_{1}$, $z_{2}$ in the form
%
\begin{equation}\label{alpha}
z_j=\e^{-\alpha_j},\qquad \alpha_j=\delta_jn_j^{-1/3}\qquad
(j=1,2),
\end{equation}
where the quantities $\delta_1,\delta_2>0$ (possibly depending on
the ratio $n_2/n_1$) are presumed to be bounded from above and
separated from zero.\vadjust{\goodbreak} Hence, (\ref{E_i}) takes the form
%
\begin{equation}\label{E_i'}
\EE_{z}^{r}(\xi)=r\sum_{k=1}^\infty\sum_{x\in\calX}x
\e^{-k\langle\alpha,x\rangle}.
\end{equation}

\begin{theorem}\label{th:delta12}
Conditions  (\ref{calibr1}) are satisfied if
$\delta_1,\delta_2$ in  (\ref{alpha}) are chosen to be
%
\begin{equation}\label{delta12}
\delta_1=\kappa r^{1/3} (n_2/n_1)^{1/3},\qquad
\delta_2=\kappa r^{1/3} (n_1/n_2)^{1/3},
\end{equation}
where
$\kappa\,{:=}\,(\zeta(3)/\zeta
(2))^{1/3}$ and
$\zeta(s)\,{=}\,\sum_{k=1}^{\infty} k^{-s}$ is
the Riemann zeta \mbox{function}.
\end{theorem}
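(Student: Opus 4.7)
The plan is to reduce the sum over coprime pairs $\calX$ in (\ref{E_i'}) to an unrestricted sum over $\ZZ_+^2 \setminus\{0\}$ by the M\"obius inversion formula, where the resulting inner sum has a clean closed form whose small-$\alpha$ expansion can be combined with Dirichlet series identities for $\zeta(2)$ and $\zeta(3)$ to yield the leading asymptotics.

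More concretely, using $\mathbf{1}\{x\in\calX\}=\sum_{d\mid\gcd(x_1,x_2)}\mu(d)$ and the substitution $x=dm$ with $m\in\ZZ_+^2\setminus\{0\}$, I would first rewrite
\begin{equation*}
\sum_{x\in\calX} x_j\, \e^{-k\langle\alpha,x\rangle}
 =\sum_{d=1}^\infty d\,\mu(d)\,G_j(kd\myp\alpha),
 \qquad G_j(\beta):=\sum_{m\in\ZZ_+^2\setminus\{0\}} m_j\, \e^{-\langle\beta,m\rangle},
\end{equation*}
and then sum over $k\ge 1$, justifying the interchange by absolute convergence (since $G_j(kd\alpha)$ decays exponentially in $kd$). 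The function $G_j$ factorises into geometric series, giving the explicit formula
\begin{equation*}
G_j(\beta) = \frac{\e^{-\beta_j}}{(1-\e^{-\beta_j})^{2}\mypp(1-\e^{-\beta_{3-j}})}
 = \frac{1}{\beta_j^{2}\mypp\beta_{3-j}}\bigl(1+O(|\beta|)\bigr)
\end{equation*}
as $\beta\to0$. Substituting into the double sum, the leading contribution is
\begin{equation*}
r\sum_{k,d\ge 1} \frac{d\mypp\mu(d)}{(kd)^{3}\,\alpha_j^{2}\mypp\alpha_{3-j}}
 = \frac{r}{\alpha_j^{2}\mypp\alpha_{3-j}}\,\zeta(3)\sum_{d\ge 1}\frac{\mu(d)}{d^{2}}
 = \frac{r\mypp\zeta(3)}{\zeta(2)\mypp\alpha_j^{2}\mypp\alpha_{3-j}},
\end{equation*}
using $\sum_d \mu(d)/d^2 = 1/\zeta(2)$. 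The subleading contributions arising from the $O(|\beta|)$ correction produce terms of order $\alpha_j^{-2}$ or $\alpha_{3-j}^{-1}\alpha_j^{-1}$, which after substituting $\alpha_j=\delta_j\mypp n_j^{-1/3}$ are of order $|n|^{2/3}$ and hence negligible compared with the main term of order $|n|$ under Assumption \ref{as:c}.

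Equating the leading asymptotics of $\EE_z^r(\xi_j)$ to $n_j$ for $j=1,2$ gives the coupled system
\begin{equation*}
\delta_j^{\mypp 2}\mypp\delta_{3-j}
 = \frac{r\mypp\zeta(3)}{\zeta(2)}\left(\frac{n_{3-j}}{n_j}\right)^{\!1/3}\!,\qquad j=1,2,
\end{equation*}
which one solves by multiplying the two equations to obtain $\delta_1\delta_2=(r\zeta(3)/\zeta(2))^{2/3}$ and dividing them to obtain $\delta_1/\delta_2=(n_2/n_1)^{2/3}$; this immediately yields the formulas (\ref{delta12}) with $\kappa=(\zeta(3)/\zeta(2))^{1/3}$. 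The main technical obstacle is controlling the error in the $G_j$ expansion uniformly over the range of $kd$ relevant to the sum, so as to guarantee that the subleading terms genuinely lose a power of $|n|$ after summation against $d\mypp\mu(d)$; this requires splitting the $(k,d)$-sum at a scale $kd\asymp \alpha_j^{-1}$, using the Taylor expansion of $G_j$ on the small-argument regime and exponential decay on the large-argument regime. The more delicate quantitative strengthening $\EE_z^r(\xi_j)=n_j+o(|n|^{2/3})$ (Theorem \ref{th:intro-a}), requiring the Mellin-transform/zeta-function machinery alluded to in (\ref{eq:intro-M}), is not needed at this stage.
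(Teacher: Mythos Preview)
Your approach is essentially the paper's: M\"obius inversion to pass from $\calX$ to $\ZZ_+^2\setminus\{0\}$, explicit evaluation of the resulting geometric-series sum $G_j$, and extraction of the leading term $r\mypp\zeta(3)/(\zeta(2)\,\alpha_j^2\alpha_{3-j})$; your double sum $r\sum_{k,d} d\,\mu(d)\,G_j(kd\alpha)$ is exactly the paper's expression (\ref{E_1}). The one place where the paper is slicker is the error control: rather than Taylor-expanding $G_j$ and splitting the $(k,d)$-sum at $kd\asymp|\alpha|^{-1}$ (which, as you rightly flag, is needed because the naive bound $\sum_{k,d} d/(kd)^2$ diverges), the paper uses the uniform elementary inequality $\e^{-\theta t}/(1-\e^{-t})^k\le C\,\e^{-\theta t/2}/t^k$ for all $t>0$ (its Lemma~\ref{lm:2.3}) to dominate the general term of the scaled series $\alpha_1^2\alpha_2\,\EE_z^r(\xi_1)$ by $O(k^{-3}d^{-2})$, then passes to the limit termwise by dominated convergence---no splitting required. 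The paper also simply verifies (\ref{calibr1}) for the stated $\delta_j$ rather than solving for them, but that is cosmetic.
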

\begin{pf}
Let us prove the first of the asymptotic relations (\ref{calibr1}).
Set
%
\begin{equation}\label{f+}
f(x):=rx_1 \e^{-\langle\alpha,x\rangle},\qquad  x\in
\RR^2_+,
\end{equation}
and
\[
F^\sharp(h):=\sum_{x\in\calX} f(hx), \qquad h>0.
\]
Then we can rewrite (\ref{E_i'}) in projection to the first
coordinate as
%
\begin{equation}\label{E_1F}
\EE_{z}^{r}(\xi_1)=\sum_{k=1}^\infty\sum_{x\in\calX}
\frac{f(kx)}{k}= \sum_{k=1}^\infty\frac{F^\sharp(k)}{k}.
\end{equation}
Let us also consider the function
%
\begin{equation}\label{eq:F}
F(h):=\sum_{m=1}^\infty F^\sharp(hm) =\sum_{m=0}^\infty\sum_{x\in
\calX} f(hmx), \qquad h>0
\end{equation}
[adding terms with $m=0$ does not affect the sum, since $f(\cdot)$
vanishes at the origin]. Recalling the definition of the set $\calX$
[see (\ref{eq:X})], we note that $\ZZ^2_+$ can be decomposed as a
\textit{disjoint} union of multiples of $\calX$:
 $\ZZ^2_+=\bigsqcup_{m=0}^{\infty}
m\calX$. Hence, the
double sum in (\ref{eq:F}) is reduced to
%
\begin{eqnarray}\label{F}
F(h)&=& \sum_{x\in\ZZ^2_+}f(hx)=rh\sum_{x_1=1}^\infty
x_1\e^{-h\alpha_1x_1}\sum_{x_2=0}^\infty
\e^{-h\alpha_2x_2}\nonumber\\[-8pt]\\[-8pt]
&=&\frac{rh
\e^{-h\alpha_1}}{(1-\e^{-h\alpha_1})^2(1-\e^{-h\alpha_2})}.\nonumber
\end{eqnarray}

By the M\"{o}bius inversion formula (see \cite{HW}, Theorem 270,
page 237),
%
\begin{equation}\label{Mebius_f}\quad
F(h)=\sum_{m=1}^{\infty}F^\sharp(hm) \quad\Longleftrightarrow\quad
F^\sharp(h)=\sum_{m=1}^{\infty}\mu(m)F(hm),
\end{equation}
where $\mu(m)$ ($m\in\NN$) is the \textit{M\"{o}bius function} defined
as follows: $\mu(1)=1$,  $\mu(m)=(-1)^{d}$ if $m$ is a product of
$d$ different primes\vadjust{\goodbreak} and $\mu(m)=0$ if $m$ has a squared factor
(\cite{HW}, Section 16.3, page 234); in particular, $|\mu(\cdot)|\le
1$. A
sufficient condition for (\ref{Mebius_f}) is that the double series
${\sum_{k,m}}|F^\sharp(hkm)|$ should be convergent,
which is
easily verified in our case: $F^\sharp(\cdot)\ge0$ and, according to
(\ref{eq:F}) and (\ref{F}),
\[
\sum_{k,m=1}^\infty F^\sharp(kmh)=\sum
_{k=1}^\infty F(kh)=
rh\sum_{k=1}^\infty\frac{k
\e^{-hk\alpha_1}}{(1-\e^{-hk\alpha_1})^2
(1-\e^{-hk\alpha_2 })}<\infty.
\]

Using (\ref{F}) and (\ref{Mebius_f}), we can rewrite (\ref{E_1F})
as
%
\begin{equation}\label{E_1}
\EE_{z}^{r}(\xi_1)=\sum_{k=1}^\infty\frac{1}{k}\sum_{m=1}^\infty
\mu(m) F(km) =\sum_{k,m=1}^\infty\frac
{rm\mu(m)
\e^{-km\alpha_1}}
{(1-\e^{-km\alpha_1 })^2(1-\e
^{-km\alpha_2 })} .\hspace*{-35pt}
\end{equation}
Note that (\ref{alpha}) and (\ref{delta12}) imply
%
\begin{equation}\label{eq:alpha2:1}
\alpha^2_1\alpha_2=\frac{r\kappa^3}{n_1} ,\qquad
\alpha_1\alpha^2_2=\frac{r\kappa^3}{n_2} ,\qquad
\alpha_2
n_2=\alpha_1 n_1,
\end{equation}
where $\kappa$ is defined in Theorem \ref{th:delta12}. Hence, we can
rewrite (\ref{E_1}) in the form
%
\begin{equation}\label{E_1'}
n_1^{-1}\EE_{z}^{r}(\xi_1)=\frac{1}{\kappa^3}\sum_{k,
m=1}^\infty
\frac{m\mu(m)\alpha_1^2\alpha_2
\e^{-km\alpha_1}}
{(1-\e^{-km\alpha_1 })^2(1-\e
^{-km\alpha_2 })} .
\end{equation}

We now need an elementary estimate, which will also be instrumental
later on.
\begin{lemma}\label{lm:2.3}
For any  $k>0$, $\theta>0$, there exists
$C=C(k,\theta)>0$ such that, for all  $t>0$,
%
\begin{equation}\label{eq:exp_bound}
\frac{\e^{-\theta
t}}{(1-\e^{-t})^k}\le\frac{C \e^{-\theta
t/2}}{t^k}.
\end{equation}
\end{lemma}

\begin{pf}
Set $g(t):=t^k\e^{-\theta t/2}(1-\e
^{-t})^{-k}$ and note
that
\[
\lim_{t\to0+}g(t)=1,\qquad \lim_{t\to\infty}g(t)=0.
\]
By continuity, the function $g(t)$ is bounded on $(0,\infty)$, and
(\ref{eq:exp_bound}) follows.
\end{pf}

By Lemma\vspace*{1pt} \ref{lm:2.3}, the general term of the series (\ref{E_1'})
is estimated, uniformly in $k$ and $m$, by
$O(k^{-3}m^{-2})$. Hence, by Lebesgue's dominated
convergence theorem one can pass to the limit in (\ref{E_1'})
termwise
%
\begin{equation}\label{zeta32}
\lim_{n\to\infty} n_1^{-1} \EE_{z}^{r}(\xi_1)=
\frac{1}{\kappa^3}\sum_{k=1}^\infty\frac{1}{k^3}\sum_{m=1}^\infty
\frac{\mu(m)}{m^2}=\frac{1}{\kappa^3}\cdot
\frac{\zeta(3)}{\zeta(2)}=1.
\end{equation}
Here the expression for the second sum (over $m$) is obtained using
the M\"{o}bius inversion formula (\ref{Mebius_f}) with
$F^\sharp(h)=h^{-2}$,  $F(h)=\sum_{m=1}^\infty
(hm)^{-2}=h^{-2}\zeta(2)$  (cf.~\cite{HW}, Theorem 287,\vadjust{\goodbreak}
page 250).

Similarly, we can check that, as $n\to\infty$,
\[
n_2^{-1}\EE_{z}^{r}(\xi_2)=\frac{1}{\kappa^3}\sum_{k,
m=1}^\infty
\frac{m\mu(m) \alpha_1\alpha^2_2
\e^{-km\alpha_2}}
{(1-\e^{-km\alpha_1 })(1-\e^{-km\alpha_2
})^2}\to1.
\]
The theorem is proved.
\end{pf}
\begin{remark}
The term $1/\zeta(2)=6/\pi^2$ appearing in formula (\ref{zeta32})
and the like, equals the asymptotic density of co-prime pairs
$x=(x_1,x_2)\in\calX$ among all integer points on $\ZZ^2_+$ (see
\cite{HW}, Theorem 459, page 409).
\end{remark}
\begin{assumption}\label{as:z}
Throughout the rest of the paper, we assume that the parameters
$z_1,z_2$ are chosen according to formulas (\ref{alpha}),
(\ref{delta12}). In particular, the measure $\QQ_z^r$ becomes
dependent on $n=(n_1,n_2)$, as well as all $\QQ_z^r$-probabilities
and mean values.
\end{assumption}

\section{Asymptotics of ``expected'' polygonal lines}\label{sec4}
$\!\!\!$For $\varGamma\in\CP$, denote by~$\varGamma(t)$ ($t\in[0,\infty]$)
the part of $\varGamma$ where the slope does not exceed $t
n_2/n_1$. Hence, the path $\tilde\varGamma_n(t)=S_n(\varGamma(t))$
serves as a tangential parameterization of the scaled polygonal line
$\tilde\varGamma_n=S_n(\varGamma)$, where
$S_n(x_1,x_2)=(x_1/n_1,x_2/n_2)$ (see Section~\ref{sec1.2}, and also
Section \ref{A-1} below). Consider the set
%
\begin{equation}\label{eq:X_n}
\calX(t):=\{x\in\calX\dvtx x_2/x_1\le tn_2/n_1\},\qquad
t\in[0,\infty].
\end{equation}
According to the association
$\CP\ni\varGamma\leftrightarrow\nu\in\varPhi_0$ described in
Section \ref{sec2.1}, for each $t\in[0,\infty]$ the polygonal line
$\varGamma(t)$ is determined by a truncated configuration
$\{\nu(x), x\in\calX(t)\}$, hence its right endpoint
$\xi(t)=(\xi_1(t),\xi_2(t))$ is given by
%
\begin{equation}\label{xi(t)}
\xi(t)=\sum_{x\in\calX(t)}x \nu(x), \qquad  t\in
[0,\infty].
\end{equation}
In particular, $\calX(\infty)=\calX$,  $\xi(\infty)=\xi$ [cf.
(\ref{eq:xi})]. Similarly to (\ref{E_i}) and (\ref{E_i'}),
%
\begin{equation}\label{eq:E(t)}
\EE_{z}^{r}[\xi(t)]= r\sum_{k=1}^\infty\sum_{x\in\calX(t)}x
\e^{-k\langle\alpha,x\rangle}, \qquad  t\in[0,\infty].
\end{equation}

Let us also set [cf. (\ref{eq:g*})]
%
\begin{equation}\label{u_12}
g^*_1(t):=\frac{t^2+2t}{(1+t)^2} ,\qquad
g^*_2(t):=\frac{t^2}{(1+t)^2},\qquad   t\in[0,\infty].
\end{equation}
As will be verified in the \hyperref[app]{Appendix} (see Section \ref{A-1}), the
vector-function $g^*(t)=(g^*_1(t),g^*_2(t))$ gives a
tangential parameterization of the parabola $\gamma^*$ defined
in (\ref{eq:gamma0}).

The goal of this section is to establish the convergence of the
(scaled) expectation $\EE_{z}^{r}[\xi(t)]$ to the limit $g^*(t)$,
first for each $t\in[0,\infty]$ (Section \ref{sec4.1}) and then
uniformly in $t\in[0,\infty]$ (Section \ref{sec4.2}).

\subsection{Pointwise convergence}\label{sec4.1}

\begin{theorem}\label{th:3.2}
For each  $t\in[0,\infty]$,
%
\begin{equation}\label{sh}
\lim_{n\to\infty} n_j^{-1}\EE_{z}^{r}[\xi_j(t)]=g^*_j(t)\qquad
(j=1,2).\vspace*{-2pt}
\end{equation}
\end{theorem}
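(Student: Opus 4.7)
The plan is to mirror the proof of Theorem~\ref{th:delta12} (the case $t=\infty$), now restricting the angular set from $\calX$ to $\calX(t)$. Set $f_j(x):=rx_j\myp\e^{-\langle\alpha,x\rangle}$ and $F^{\sharp}_{t,j}(h):=\sum_{x\in\calX(t)}f_j(hx)$, so that by (\ref{eq:E(t)})
\[
\EE_z^r[\xi_j(t)]=\sum_{k=1}^\infty \frac{F^{\sharp}_{t,j}(k)}{k}\qquad (j=1,2).
\]
The key observation is that the rational cone $\bigl\{y\in\ZZ_+^2:\,y_2n_1\le tn_2y_1\bigr\}$ is invariant under dilations $y\mapsto my$, so it decomposes as $\{0\}\sqcup\bigsqcup_{m\ge1}m\calX(t)$, exactly as in the unconstrained case. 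Thus, defining $F_{t,j}(h):=\sum_{m\ge1}F^{\sharp}_{t,j}(hm)$, the M\"obius inversion formula (\ref{Mebius_f}) applies verbatim and yields
\[
\EE_z^r[\xi_j(t)]=\sum_{k,m=1}^\infty \frac{\mu(m)\myp F_{t,j}(km)}{k}.
\]

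Next I would obtain the pointwise limit of each term by evaluating $F_{t,j}(h)$ through explicit summation. Performing the truncated inner geometric series in $y_2\in\{0,1,\dots,\lfloor tn_2y_1/n_1\rfloor\}$ yields a factor $\bigl(1-\e^{-h\alpha_2(\lfloor tn_2y_1/n_1\rfloor+1)}\bigr)/(1-\e^{-h\alpha_2})$; the identity $n_1\alpha_1=n_2\alpha_2$ from (\ref{eq:alpha2:1}) turns the exponent in the numerator into $ht\alpha_1y_1+O(h\alpha_2)$, so that the $y_1$-summation reduces to a difference of two generating sums of the form $\sum y_1\e^{-ay_1}\sim 1/a^2$ (for $j=1$; for $j=2$ one integrates in $y_2$ first). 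The upshot, for each fixed $k,m\ge1$, is
\[
\frac{F_{t,j}(km)}{n_j}\sim \frac{g^*_j(t)}{(km)^2\kappa^3}\qquad (n\to\infty),
\]
where $g^*_j(t)$ emerges as the Gamma-type integral $\iint_{\{u_2\le tu_1\}} u_j\myp \e^{-u_1-u_2}\,\dif{}u_1\,\dif{}u_2$; cf.\ (\ref{u_12}).

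Third, since $\calX(t)\subset\calX$ and the summands are non-negative, $F_{t,j}(h)\le F(h)$ termwise, with $F$ as in (\ref{F}). Hence Lemma~\ref{lm:2.3}, already used in the proof of Theorem~\ref{th:delta12}, supplies a summable $n$-independent dominant $O(k^{-3}m^{-2})$ for the general term of the $(k,m)$-double series. Lebesgue's dominated convergence theorem then delivers
\[
\lim_{n\to\infty}n_j^{-1}\EE_z^r[\xi_j(t)]=\frac{g_j^*(t)}{\kappa^3}\sum_{k=1}^\infty\frac{1}{k^3}\sum_{m=1}^\infty\frac{\mu(m)}{m^2}=\frac{g_j^*(t)}{\kappa^3}\cdot\frac{\zeta(3)}{\zeta(2)}=g_j^*(t),
\]
which is (\ref{sh}).

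The main technical difficulty I anticipate is the evaluation of $F_{t,j}(h)$: unlike the unconstrained case (\ref{F}), the angular cutoff couples the two summation variables through $M(y_1):=\lfloor tn_2y_1/n_1\rfloor$. This is overcome by keeping the inner geometric sum explicit and invoking $n_1\alpha_1=n_2\alpha_2$ to flatten the constraint into the $n$-free form $u_2\le tu_1$; the floor and boundary effects then contribute only lower-order corrections that vanish in the pointwise limit, leaving intact the Gamma-type integral representation of $g^*_j(t)$.
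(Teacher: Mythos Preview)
Your proposal is correct and follows essentially the same route as the paper's proof: both apply the M\"obius inversion to reduce the sum over $\calX(t)$ to a sum over the lattice cone $\{y_2/y_1\le tn_2/n_1\}$, evaluate the truncated inner geometric series in $y_2$, use the relation $n_1\alpha_1=n_2\alpha_2$ to rewrite the cutoff exponent as $t\alpha_1y_1$ up to a floor correction, and then pass to the limit termwise via Lemma~\ref{lm:2.3} and dominated convergence. The paper differs only in presentation---it bounds the floor error $\Delta_{k,m}$ globally (obtaining $O(\alpha_1^{-2}\ln\alpha_1)$) before taking the termwise limit, whereas you absorb that correction into the pointwise asymptotics of each $(k,m)$-term; both orderings are valid here.
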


\begin{pf}
Theorem \ref{th:delta12} implies that (\ref{sh}) holds for
$t=\infty$. Assume that \mbox{$t<\infty$}, and let $j=1$ (the case $j=2$ is
considered in a similar manner). Setting for brevity $c_n:=n_2/n_1$
and arguing as in the proof of Theorem \ref{th:delta12} [see
(\ref{E_i'}), (\ref{E_1F}) and (\ref{E_1'})], from (\ref{eq:E(t)})
we obtain
%
\begin{eqnarray}\label{Z1}
\EE_{z}^{r}[\xi_1(t)]&=&r\sum_{k,m=1}^\infty
m\mu(m)\sum_{x_1=1}^\infty
x_1\e^{-km\alpha_1x_1}\sum_{x_2=0}^{\hat x_2} \e^{-km\alpha_2x_2}
\nonumber\\[-10pt]\\[-10pt]
&=&r\sum_{k,m=1}^\infty
\frac{m\mu(m)}{1-\e^{-km\alpha_2}} \sum
_{x_1=1}^\infty
x_1\e^{-km\alpha_1x_1}\bigl(1-\e^{-km\alpha_2(\hat{x}_2+1)}\bigr),\nonumber
\end{eqnarray}
where ${\hat{x}}_2={\hat{x}}_2(t)$ denotes the integer part of
$tc_nx_1$, so that
%
\begin{equation}\label{eq:x*}
0\le tc_nx_1-\hat{x}_2<1.
\end{equation}
Aiming to replace $\hat{x}_2+1$ by $tc_n
x_1$ in
(\ref{Z1}), we recall (\ref{eq:alpha2:1}) and rewrite the sum over
$x_1$ as\vspace*{-2pt}
%
\begin{equation}\label{eq:sum+Delta}
\sum_{x_1=1}^\infty
x_1\e^{-km\alpha_1x_1}(1-\e^{-km\alpha_1tx_1
})
+\Delta_{k,m}(t,\alpha),\vspace*{-2pt}
\end{equation}
where\vspace*{-2pt}
\[
\Delta_{k,m}(t,\alpha):= \sum_{x_1=1}^\infty
x_1\e^{-km\alpha_1x_1(1+t)}
\bigl(1-\e^{-km\alpha_2(\hat{x}_2+1-t
c_n x_1)}\bigr).\vspace*{-2pt}
\]
Using that $0<\hat{x}_2+1-tc_nx_1\le1$ [see
(\ref{eq:x*})] and applying Lemma \ref{lm:2.3}, we obtain, uniformly
in $k,m\ge1$ and $t\in[0,\infty]$,
\[
0<\frac{\Delta_{k,m}(t,\alpha)}{1-\e^{-km\alpha
_2}}\le
\sum_{x_1=1}^\infty x_1\e^{-km\alpha_1x_1}=
\frac{\e^{-km\alpha_1}}{(1-\e^{-km\alpha_1 })^2}
=O(1) \frac{\e^{-m\alpha_1/2}}{(km\alpha_1
)^{2}} .
\]
Substituting this estimate into (\ref{Z1}), we see that the error
resulting from the replacement of $\hat{x}_2+1$ by $t
c_n
x_1$ is dominated by
\[
O(\alpha_1^{-2})\sum_{k=1}^\infty\frac{1}{k^{2}}
\sum_{m=1}^\infty\frac{\e^{-m\alpha_1/2}}{m}
=O(\alpha_1^{-2})
\ln (1-\e^{-\alpha_1/2})
=O(\alpha_1^{-2}\ln\alpha_1).
\]

Returning to representation (\ref{Z1}) and computing the sum in
(\ref{eq:sum+Delta}), we find
%
\begin{eqnarray}\label{UE_1}
\EE_{z}^{r}[\xi_1(t)] &=&r\sum_{k,m=1}^\infty
\frac{m\mu(m)}{1-\e^{-km\alpha_2}}\cdot
\frac{\e^{-km\alpha_1y}}{(1-\e^{-km\alpha_1y})^2}
\bigg|_{ y=1+t}^{ y=1}\nonumber\\[-10pt]\\[-10pt]
&&{}+O(\alpha_1^{-2}\ln\alpha_1).
\nonumber\vadjust{\goodbreak}
\end{eqnarray}
Passing to the limit by Lebesgue's dominated convergence theorem,
similarly to the proof of Theorem \ref{th:delta12} [cf.
(\ref{zeta32})] we get, as $n\to\infty$,
\[
n_1^{-1}\EE_{z}^{r}[\xi_1(t)]\to\frac{1}{\kappa^3}\sum
_{k=1}^\infty
\frac{1}{k^{3}}\sum_{m=1}^\infty\frac{\mu(m)}{m^{2}}
\biggl(1-\frac{1}{(1+t)^2}\biggr) =\frac{t^2+2t}{(1+t)^2} ,
\]
which coincides with $g^*_1(t)$, as claimed.
\end{pf}

\subsection{Uniform convergence}\label{sec4.2}

There is a stronger version of Theorem \ref{th:3.2}.
\begin{theorem}\label{th:8.1.1a}
Convergence in  (\ref{sh}) is uniform in
$t\in[0,\infty]$, that is,
\[
\lim_{n\to\infty}\sup_{0\le t\le\infty} |
n_j^{-1}\EE_{z}^{r}[\xi_j(t)]-g^*_j(t)|=0\qquad (j=1,2).
\]
\end{theorem}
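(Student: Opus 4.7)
The plan is to upgrade pointwise convergence (Theorem \ref{th:3.2}) to uniform convergence on the compactified parameter interval $[0,\infty]$ by exploiting monotonicity, in the spirit of Pólya's classical theorem (or the second Dini theorem). The basic idea: if a sequence of monotone functions converges pointwise to a continuous monotone limit on a compact interval, then the convergence is automatically uniform.

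First I would record the two monotonicity facts. The sets $\calX(t)$ in \eqref{eq:X_n} are nested, so by \eqref{xi(t)} each coordinate $\xi_j(t)$ is non-decreasing in $t$; taking expectation, the map $t \mapsto \EE_z^r[\xi_j(t)]$ is non-decreasing. On the other hand, from \eqref{u_12} one sees directly that $g_j^*(t)$ is continuous and non-decreasing on $[0,\infty]$, with finite limits $g_j^*(\infty)=1$. Also, Theorem~\ref{th:delta12} gives pointwise convergence at $t=\infty$, and Theorem~\ref{th:3.2} gives it for every finite $t \ge 0$.

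Next I would run the Pólya-style sandwich. Fix $\varepsilon>0$. Using uniform continuity of $g_j^*$ on the compactification $[0,\infty]$, choose a finite partition $0=t_0<t_1<\cdots<t_N=\infty$ such that $g_j^*(t_{i+1})-g_j^*(t_i)<\varepsilon$ for every $i$. By Theorems \ref{th:3.2} and \ref{th:delta12}, pointwise convergence holds at each of the finitely many $t_i$, so there exists $n_0$ with
\begin{equation*}
\bigl|n_j^{-1}\EE_z^r[\xi_j(t_i)]-g_j^*(t_i)\bigr|<\varepsilon, \qquad i=0,1,\dots,N,
\end{equation*}
for all $n$ with $n_1,n_2\ge n_0$ (under Assumption \ref{as:c}). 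For any $t\in[t_i,t_{i+1}]$, monotonicity yields
\begin{equation*}
n_j^{-1}\EE_z^r[\xi_j(t_i)] \le n_j^{-1}\EE_z^r[\xi_j(t)] \le n_j^{-1}\EE_z^r[\xi_j(t_{i+1})],
\end{equation*}
together with the analogous bracketing for $g_j^*(t)$. Combining these with the pointwise estimates at $t_i,t_{i+1}$ gives
\begin{equation*}
\bigl|n_j^{-1}\EE_z^r[\xi_j(t)]-g_j^*(t)\bigr|<2\varepsilon
\end{equation*}
uniformly in $t\in[0,\infty]$, proving the theorem.

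I do not expect a real obstacle here. The only point to handle carefully is the endpoint $t=\infty$: one must confirm that $\EE_z^r[\xi_j(t)] \to \EE_z^r[\xi_j]$ as $t\to\infty$ (which is immediate from monotone convergence applied to \eqref{xi(t)}) so that the monotone bracketing remains valid on the last subinterval $[t_{N-1},\infty]$, and that the pointwise limit at $\infty$ supplied by Theorem \ref{th:delta12} indeed equals $g_j^*(\infty)=1$. Everything else is the standard monotonicity-plus-continuity uniformization argument, and no further analytic estimates from Sections \ref{sec3}--\ref{sec4.1} are required.
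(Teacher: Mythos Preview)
Your proposal is correct and rests on the same core idea as the paper: upgrade pointwise convergence to uniform via the P\'olya-type monotonicity argument (which the paper isolates as Lemma~\ref{lm:8.1}). The execution differs slightly. The paper states Lemma~\ref{lm:8.1} only for finite intervals, applies it on $[0,t^*]$, and then handles the tail $[t^*,\infty]$ separately by an explicit estimate: it returns to formula~\eqref{UE_1} from the proof of Theorem~\ref{th:3.2} and bounds $n_1^{-1}\EE_z^r[\xi_1(\infty)-\xi_1(t)]$ directly, showing it is $O((1+t^*)^{-2})$ uniformly in~$n$. Your version is more economical: since $g_j^*$ extends continuously to the compactification $[0,\infty]$ and pointwise convergence holds at $t=\infty$ by Theorem~\ref{th:delta12}, you run the sandwich argument globally on $[0,\infty]$ with a finite partition that includes $t_N=\infty$, bypassing any analytic tail estimate. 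Both routes are valid; yours is shorter and uses nothing beyond Theorems~\ref{th:delta12} and~\ref{th:3.2}, while the paper's tail bound gives a little extra quantitative information that is not actually needed here.
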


For the proof, we need the following general lemma.
\begin{lemma}\label{lm:8.1}
Let  $\{f_n(t)\}$ be a sequence of nondecreasing functions on a~finite interval  $[a,b]$, such that, for
each  $t\in[a,b]$,
$\lim_{n\to\infty}f_n(t)=f(t)$, where  $f(t)$ is a
continuous (nondecreasing) function on
$[a,b]$. Then the convergence  $f_n(t)\to f(t)$ as
$n\to\infty$ is uniform on  $[a,b]$.
\end{lemma}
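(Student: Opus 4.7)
The plan is to exploit the monotonicity of both $f_n$ and $f$ together with the uniform continuity of $f$ on the compact interval $[a,b]$, reducing uniform convergence to pointwise convergence on a finite grid. This is a classical Pólya-type argument (a monotone analogue of Dini's theorem, which does not require monotone convergence of the sequence, only monotonicity in $t$).

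Fix $\varepsilon>0$. Since $f$ is continuous on $[a,b]$, it is uniformly continuous, so there exists a partition $a=t_0<t_1<\cdots<t_N=b$ with
\begin{equation*}
f(t_{k+1})-f(t_k)<\varepsilon,\qquad k=0,1,\dots,N-1.
\end{equation*}
By the pointwise convergence $f_n(t_k)\to f(t_k)$ at the finitely many grid points, there exists $n_0$ such that for all $n\ge n_0$ and all $k=0,\dots,N$,
\begin{equation*}
|f_n(t_k)-f(t_k)|<\varepsilon.
\end{equation*}

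Now for arbitrary $t\in[a,b]$, pick $k$ with $t_k\le t\le t_{k+1}$. Using the monotonicity of $f_n$ and $f$ we bound $f_n(t)$ from above by $f_n(t_{k+1})\le f(t_{k+1})+\varepsilon\le f(t_k)+2\varepsilon\le f(t)+2\varepsilon$, and from below by $f_n(t_k)\ge f(t_k)-\varepsilon\ge f(t_{k+1})-2\varepsilon\ge f(t)-2\varepsilon$. Hence $|f_n(t)-f(t)|\le 2\varepsilon$ uniformly in $t\in[a,b]$ for all $n\ge n_0$, which proves uniform convergence. There is no real obstacle here; the only point to watch is that the three ingredients (monotonicity of $f_n$, monotonicity of $f$, and uniform continuity of $f$) are all used essentially in the two-sided sandwich bound.
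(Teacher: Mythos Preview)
Your proof is correct and essentially identical to the paper's: both use uniform continuity of $f$ to pick a finite partition, invoke pointwise convergence at the grid points, and then sandwich $f_n(t)$ between grid values via the monotonicity of $f_n$ and $f$ to get the uniform bound $|f_n(t)-f(t)|\le 2\varepsilon$.
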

\begin{pf}
Since $f$ is
continuous on a closed interval $[a,b]$, it is uniformly
continuous. Therefore, for any $\varepsilon>0$ there exists
$\delta>0$ such that $|f(t')-f(t)|<\varepsilon$ whenever
$|t'-t|<\delta$. Let $a=t_0<t_1<\cdots<t_N=b$ be a partition such
that $\max_{1\le i\le N}(t_{i}-t_{i-1})<\delta$. Since
$\lim_{n\to\infty}f_n(t_i)=f(t_i)$ for each $i=0,1,\ldots,N$, there
exists $n^*$ such that ${\max_{0\le i\le
N}}|f_n(t_i)-f(t_i)|<\varepsilon$ for all $n\ge n^*$. By monotonicity
of $f_n$ and $f$, this implies that for any $t\in[t_i,t_{i+1}]$ and
all $n\ge n^*$
\[
f_n(t)-f(t)\le f_n(t_{i+1})-f(t_i) \le
f_n(t_{i+1})-f(t_{i+1})+\varepsilon\le2\varepsilon.
\]
Similarly, $f_n(t)-f(t)\ge-2\varepsilon$. Therefore,
${\sup_{t\in[a,b]}}|f_n(t)-f(t)|\le2\varepsilon$, and the uniform
convergence follows.
\end{pf}
\begin{pf*}{Proof of Theorem  \ref{th:8.1.1a}}
Suppose that
$j=1$ (the case $j=2$ is handled similarly). Note that for each $n$
the function
\[
f_n(t):=n_1^{-1}\EE_{z}^{r}[\xi_1(t)]=\frac{1}{n_1}
\sum_{x\in
\calX(t)} x_1\EE_{z}^{r}[\nu(x)]
\]
is nondecreasing in $t$. Therefore, by Lemma \ref{lm:8.1} the
convergence (\ref{sh}) is uniform on any interval $[0,t^*]$.
Furthermore, since $n_1^{-1}\EE_{z}^{r}[\xi_1(\infty)]\to
g^*_1(\infty)$ and the function $g^*_1(t)$ is continuous at infinity
[see (\ref{u_12})], for the proof of the uniform convergence on a
suitable interval $[t^*,\infty]$ it suffices to show that for any
$\varepsilon>0$ one can choose $t^*$ such that, for all large enough
$n_1, n_2$ and all $t\ge t^*$,
%
\begin{equation}\label{eq:E<epsilon}
n_1^{-1}\EE_{z}^{r}|\xi_1(\infty)-\xi_1(t)|\le\varepsilon.\vadjust{\goodbreak}
\end{equation}

On account of (\ref{UE_1}) we have
%
\begin{eqnarray}\label{eq:Ez}
\EE_{z}^{r}[\xi_1(\infty)-\xi_1(t)] &=&  \sum
_{k,m=1}^\infty
\frac{rm\mu(m)}{1-\e^{-km\alpha
_2}}\cdot
\frac{\e^{-km\alpha_1(1+t)}}{(1-\e^{-km\alpha_1(1+t)}
)^2}\nonumber\\[-8pt]\\[-8pt]
&&{} +O(\alpha_1^{-2}\ln\alpha_1).
\nonumber
\end{eqnarray}
Note that by Lemma \ref{lm:2.3}, uniformly in $k,m\ge1$,
\[
\frac{\e^{-km\alpha_2}}{1-\e^{-km\alpha_2}}\cdot
\frac{\e^{-km\alpha_1(1+t)}}{(1-\e^{-km\alpha_1(1+t)})^2}=
\frac{O(1)}{\alpha_1^{2}\alpha_2 (km)^{3}(1+t)^{2}} .
\]
Returning to (\ref{eq:Ez}), we obtain, uniformly in $t\ge t^*$,
\[
\alpha_1^{2}\alpha_2\EE_{z}^{r}[\xi_1(\infty)-\xi_1(t)]
=\frac{O(1)}{(1+t)^{2}} \sum_{k=1}^\infty
\frac{1}{k^{3}}\sum_{m=1}^\infty
\frac{1}{m^{2}}=\frac{O(1)}{(1+t^*)^{2}} ,
\]
whence by (\ref{alpha}) we get (\ref{eq:E<epsilon}).
\end{pf*}

\section{Further refinement}\label{sec5}
For future applications, we need to refine the asymptotic formulas
(\ref{calibr1}) by estimating the error term. The following theorem
is one of the main technical ingredients of our work.
\begin{theorem}\label{th:5.1}
Suppose that the parameter $z$ is chosen according to formulas
(\ref{alpha}), (\ref{delta12}),
so that  $\EE^{r}_z(\xi)=n(1+o(1))$ (see
Theorem \ref{th:delta12}). Then
$\EE_{z}^{r}(\xi)=n+o(|n|^{2/3})$ as $n\to\infty$.
\end{theorem}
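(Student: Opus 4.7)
The plan is to realise the Mellin--Barnes strategy sketched in the Introduction: represent $\EE^r_z(\xi_1)-n_1$ as a contour integral
\begin{equation*}
\EE^r_z(\xi_1)-n_1=\frac{r}{2\pi i}\int_{c-i\infty}^{c+i\infty}\!\frac{\widetilde{F}_1(s)\,\zeta(s+1)}{(-\ln z_1)^{s+1}\,\zeta(s)}\,\dif s,\qquad 1<c<2,
\end{equation*}
shift the contour leftward across $\Re s=1$ into the critical strip $0<\Re s<1$, and bound it there. By symmetry it suffices to treat $j=1$.

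Starting from the expression (\ref{E_1}) I rewrite the summand $\psi(h):=\e^{-h\alpha_1}/[(1-\e^{-h\alpha_1})^2(1-\e^{-h\alpha_2})]$ via inverse Mellin transform; after the substitution $u=h\alpha_1$ the factor $(-\ln z_1)^{-(s+1)}=\alpha_1^{-(s+1)}$ pulls out and the residual transform $\widetilde{F}_1(s)$ is a function of $s$ and of the bounded dimensionless ratio $c_n:=\alpha_2/\alpha_1\asymp 1$. On a vertical line $\Re s>2$ I may interchange the $(k,m)$-sum with the Mellin integral, since the double Dirichlet series $\sum_{k,m\ge 1}m\mu(m)(km)^{-s-1}=\zeta(s+1)/\zeta(s)$ converges absolutely there. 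Pushing the contour across $\Re s=2$, the only singularity encountered is the simple pole of $\widetilde{F}_1$ at $s=2$ (arising from the leading term $1/(c_nu^3)$ of its underlying integrand at $0$, giving residue $1/c_n$); combined with the regular factor $\zeta(3)/\zeta(2)=\kappa^3$ and the identity $\alpha_1^2\alpha_2=r\kappa^3/n_1$ from (\ref{eq:alpha2:1}), this residue equals precisely $n_1/r$, so that multiplying by $r$ and subtracting yields the displayed representation on any contour $\Re s=c\in(1,2)$.

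On such a contour the integrand has magnitude $\asymp|n|^{(c+1)/3}$, which is $o(|n|^{2/3})$ only for $c<1$; I must therefore shift the contour further left past $\Re s=1$. Inside the strip $1\le\Re s\le 2$ the only candidate singularity is at $s=1$, where $\widetilde{F}_1$ has a simple pole (from the $1/u^2$ term in its underlying integrand) but $\zeta(s+1)/\zeta(s)$ has a simple zero (from the pole of $\zeta$ at~$1$); the product is therefore regular and no residue is collected. To avoid the nontrivial zeros of $\zeta$ inside the critical strip I deform the contour to lie within the classical zero-free region $\Re s\ge\sigma(\tau):=1-A/\log(|\tau|+2)$ with $A>0$ small. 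On such a curve the standard bounds give $|\zeta(s+1)|=O(\log(|\tau|+2))$ and $|\zeta(s)|^{-1}=O(\log(|\tau|+2))$, whereas subtracting from the integrand defining $\widetilde{F}_1$ its explicit Laurent tail at $0$, namely $1/(c_nu^3)+1/(2u^2)-(c_n^2+1)/(12c_nu)$, leaves a smooth and exponentially-decaying remainder whose Mellin transform decays faster than any power of $|\tau|^{-1}$, uniformly for $c_n$ in a compact subinterval of $(0,\infty)$. Writing $|(-\ln z_1)^{-(s+1)}|=\alpha_1^{-2}\cdot\alpha_1^{A/\log(|\tau|+2)}$ and splitting the $\tau$-integral into a bounded range $|\tau|\le T_n$, on which $\alpha_1^{A/\log(|\tau|+2)}\to 0$ provided $T_n$ grows sufficiently slowly with $n$, and a tail $|\tau|>T_n$ controlled by the super-polynomial decay of $\widetilde{F}_1$, one obtains the integral as $o(\alpha_1^{-2})=o(|n|^{2/3})$, as required.

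The principal obstacle is this final estimation step, which blends two analytic number theory ingredients: the quantitative zero-free region for $\zeta$ together with the logarithmic bound on $|\zeta(s)|^{-1}$ therein, and the uniform super-polynomial decay of $\widetilde{F}_1$ in $|\Im s|$ along a contour that skirts its simple pole at $s=1$. The pole is neutralised by first peeling off from the defining integrand the three explicit singular monomials at $0$, after which the residual function is smooth and its Mellin transform admits rapid decay through standard integration-by-parts arguments.
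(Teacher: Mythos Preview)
Your overall strategy coincides with the paper's: obtain the contour-integral representation with integrand $\widetilde{F}_1(s)\,\zeta(s+1)/(\alpha_1^{s+1}\zeta(s))$ on a line $1<c<2$, shift onto the boundary of the classical zero-free region $\sigma=1-A/\log(|\tau|+2)$ (no residue at $s=1$, since the simple pole of $\widetilde{F}_1$ is cancelled by that of $\zeta$), and show the resulting integral is $o(\alpha_1^{-2})$. Your final splitting argument is also fine and parallels the paper's dominated-convergence step.

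The gap is in how you control $\widetilde{F}_1$ on the shifted contour. You subtract from the integrand $g(u)=\e^{-u}/[(1-\e^{-u})^2(1-\e^{-uc_n})]$ its bare Laurent tail $L(u)=a_3u^{-3}+a_2u^{-2}+a_1u^{-1}$ at $0$, but the remainder $R=g-L$ is \emph{not} exponentially decaying: since $g(u)=O(\e^{-u})$ at infinity while $L(u)\sim a_1/u$ there (incidentally $a_1=(c_n^2-1)/(12c_n)$, not $-(c_n^2+1)/(12c_n)$), one has $R(u)\sim -a_1/u$, and $\int_0^\infty u^s R(u)\,\dif u$ converges only on the strip $-1<\Re s<0$\,---\,not near $\Re s=1$ where you need it. The bare monomials $u^{-k}$ themselves have no convergent Mellin transform on any line, so your decomposition does not furnish the analytic continuation of $\widetilde{F}_1$, let alone its decay, on the relevant contour.

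The paper's remedy is to peel off only the singular part of the \emph{single factor} $(1-\e^{-y/c_n})^{-1}$, namely $c_n/y+\tfrac12$, keeping the exponentially-decaying prefactor $\e^{-y}/(1-\e^{-y})^2$ intact. The subtracted pieces then have explicit Mellin transforms $c_n\myp\zeta(s-1)\myp\Gamma(s)$ and $\tfrac12\myp\zeta(s)\myp\Gamma(s+1)$, meromorphic with known poles and exponentially small in $|\tau|$ by Stirling; the remaining integral $J(s)$ converges absolutely for all $\Re s>0$, and two integrations by parts give $J(s)=O(|\tau|^{-2})$, which already suffices. With this decomposition in hand, the rest of your argument goes through unchanged.
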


For the proof of this theorem, some preparations are needed.

\subsection{Approximation of sums by integrals}\label{sec5.1}

Let a function $f\colon\RR^2_+\to\RR$ be continuous and absolutely
integrable on $\RR^2_+$, together with its partial derivatives
up to the second order. Set
%
\begin{equation}\label{F0}
F(h):=\sum_{x\in\ZZ^2_+}f(hx), \qquad  h>0
\end{equation}
[as verified below, the series in (\ref{F0}) is absolutely
convergent for all $h>0$], and assume that for some $\beta>2$
%
\begin{equation}\label{beta}
F(h)=O(h^{-\beta}),\qquad  h\to\infty.
\end{equation}
Consider the Mellin transform of $F(h)$ (see, e.g., \cite{Widder},
Chapter VI, Section~9),
%
\begin{equation}\label{Mel}
\widehat{F}(s):=\int_0^\infty h^{s-1}F(h) \,\dif h.
\end{equation}

\begin{lemma}\label{lm:Delta1}
Under the\vspace*{1pt} above conditions, the function  $\widehat{F}(s)$
is meromorphic in the strip  $1<\Re s<\beta$, with a\vadjust{\goodbreak}
single  (simple) pole at  $s=2$.
Moreover,~$\widehat{F}(s)$ satisfies the identity
%
\begin{equation}\label{Muntz2}
\widehat{F}(s)=\int_0^\infty h^{s-1}\Delta_f(h) \,\dif h,\qquad1<\Re s<2,
\end{equation}
where
%
\begin{equation}\label{Delta}
\Delta_f(h):=F(h)-\frac{1}{h^2}\int_{\RR^2_+} f(x) \,\dif x,\qquad h>0.
\end{equation}
\end{lemma}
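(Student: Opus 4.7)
The plan is to analyze $\widehat{F}(s)$ by splitting the Mellin integral at $h=1$ and combining two separate pieces of information: the decay hypothesis $F(h)=O(h^{-\beta})$ controls the behavior at infinity, while a Riemann-sum approximation controls the behavior near the origin. Concretely, the first step I would carry out is to establish the key estimate
\begin{equation*}
\Delta_f(h)=O(h^{-1}),\qquad h\to 0^+.
\end{equation*}
Since $f$ and its partial derivatives up to order two are continuous and absolutely integrable on $\RR_+^2$, the quantity $h^2 F(h)=h^2\sum_{x\in\ZZ_+^2}f(hx)$ is a Riemann sum for $\int_{\RR_+^2}f(x)\,\dif x$, and a two-dimensional Euler--Maclaurin expansion on the quadrant gives $h^2 F(h)=\int_{\RR_+^2}f(x)\,\dif x+O(h)$; the correction of order $h$ collects boundary contributions along the two coordinate axes, controlled by the $L^1$-norms of $f$ and of its first and second derivatives. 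Dividing by $h^2$ gives the desired estimate. At the other end, the hypothesis $F(h)=O(h^{-\beta})$ together with $\beta>2$ yields $\Delta_f(h)=O(h^{-2})$ as $h\to\infty$.

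The second step is to write $F(h)=\Delta_f(h)+h^{-2}I$ where $I:=\int_{\RR_+^2}f(x)\,\dif x$, split $\widehat{F}(s)=\int_0^1+\int_1^\infty$, and treat the contribution of the constant-times-$h^{-2}$ term on $(0,1)$ explicitly. This gives, for $2<\Re s<\beta$,
\begin{equation*}
\widehat{F}(s)=\int_0^1 h^{s-1}\Delta_f(h)\,\dif h+\frac{I}{s-2}+\int_1^\infty h^{s-1}F(h)\,\dif h.
\end{equation*}
The first integral is holomorphic for $\Re s>1$ thanks to $\Delta_f(h)=O(h^{-1})$ at $0$ (and its bound at $\infty$ from $F$ and the $h^{-2}$ term); the middle term is meromorphic on all of $\CC$ with a single simple pole at $s=2$ of residue $I$; and the last integral is holomorphic for $\Re s<\beta$ by the decay hypothesis. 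Taken together, the right-hand side supplies an analytic continuation of $\widehat{F}(s)$ to the strip $1<\Re s<\beta$ whose only singularity is the asserted simple pole at $s=2$.

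For the identity \textup{(\ref{Muntz2})}, I would restrict to the substrip $1<\Re s<2$, where the elementary integral $\int_1^\infty h^{s-3}\,\dif h=-1/(s-2)$ is valid. Substituting $F(h)=\Delta_f(h)+h^{-2}I$ in the $[1,\infty)$-piece of the previous display gives
\begin{equation*}
\int_1^\infty h^{s-1}F(h)\,\dif h=\int_1^\infty h^{s-1}\Delta_f(h)\,\dif h-\frac{I}{s-2},
\end{equation*}
so the two pole contributions $\pm I/(s-2)$ cancel and one obtains $\widehat{F}(s)=\int_0^\infty h^{s-1}\Delta_f(h)\,\dif h$, proving \textup{(\ref{Muntz2})}. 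Absolute convergence of this last integral on $1<\Re s<2$ is a direct consequence of the two estimates $\Delta_f(h)=O(h^{-1})$ at $0$ and $\Delta_f(h)=O(h^{-2})$ at $\infty$, and also justifies Fubini-type manipulations if needed.

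The main obstacle is the Riemann-sum estimate producing the $O(h)$ error on $\RR_+^2$. Because the sum runs over the closed quadrant $\ZZ_+^2$, which includes the boundary axes, the usual midpoint-rule cancellations are absent and one must keep track of edge and corner contributions via Euler--Maclaurin; this is precisely where the smoothness and integrability assumptions on $f$ up to order two (rather than one) are consumed. Once the estimate $\Delta_f(h)=O(h^{-1})$ is in hand, the rest of the proof is a routine reorganization of the Mellin integral.
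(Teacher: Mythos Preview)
Your proposal is correct and follows essentially the same approach as the paper: both obtain the key estimate $\Delta_f(h)=O(h^{-1})$ as $h\to0$ via a two-fold application of Euler--Maclaurin on the quadrant (the paper writes this out explicitly using the periodic Bernoulli function $\tilde B_1$), then split the Mellin integral at $h=1$, isolate the pole $I/(s-2)$ from the $\int_0^1 h^{s-3}\,\dif h$ piece, and finally recover identity~(\ref{Muntz2}) by rewriting $1/(s-2)=-\int_1^\infty h^{s-3}\,\dif h$ for $\Re s<2$ and recombining. The only difference is cosmetic: the paper gives the explicit Euler--Maclaurin expansion (its equation~(\ref{eq:Fi})) rather than quoting the $O(h)$ remainder as a black box.
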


\begin{remark}
Identity (\ref{Muntz2}) is a two-dimensional analogue of M\"{u}ntz's
formula for univariate functions (see \cite{Titch1},
Section 2.11, pages 28 and 29).
\end{remark}
\begin{pf*}{Proof of Lemma  \ref{lm:Delta1}}
Let a function $\phi\dvtx\RR_+\to\RR$ be continuous and continuously
differentiable, and suppose that both $\phi$ and $\phi^{\prime}$
are absolutely integrable on $\RR_+$. It follows that
$\lim_{x\to\infty} \phi(x)=0$; indeed, note that
\[
\int_0^\infty
\phi^{\prime}(x) \,\dif x=\lim_{x\to\infty}\int_0^x
\phi^{\prime}(y) \,\dif y=\lim_{x\to\infty}\phi
(x)-\phi(0),
\]
hence $\lim_{x\to\infty} \phi(x)$ exists and, since $\phi$ is
integrable, the limit must equal zero. Then the well-known
Euler--Maclaurin summation formula states that
%
\begin{equation}\label{EM1}
\sum_{j=0}^\infty\phi(hj)= \frac{1}{h}\int_0^\infty
\phi(x) \,\dif x + \int_0^\infty\tilde B_1
\biggl(\frac{x}{h}\biggr) \phi^{\prime}(x) \,\dif x,
\end{equation}
where $\tilde B_1(x):=x-[x]-1$ (cf. \cite{BR},
Section A.4, page 254).

Applying formula (\ref{EM1}) twice to the double series (\ref{F0}),
we obtain
%
\begin{eqnarray}\qquad\label{eq:Fi}
F(h)&=&\frac{1}{h^2}\int_{\RR^2_+} f(x) \,\dif x +
\frac{1}{h}\int_{\RR^2_+}  \biggl(\tilde B_1
\biggl(\frac{x_1}{h}\biggr) \,\frac{\partial f(x)}{\partial x_1}+
\tilde B_1\biggl(\frac{x_2}{h}\biggr)\,\frac{\partial f(x)}{\partial
x_2}\biggr)\,\dif x\nonumber\\[-10pt]\\[-10pt]
&&{} + \int_{\RR^2_+} \tilde B_1\biggl(\frac{x_1}{h}\biggr)\tilde
B_1\biggl(\frac{x_2}{h}\biggr)\, \frac{\partial^2 f(x)}{\partial
x_1\,\partial x_2} \,\dif x.
\nonumber
\end{eqnarray}
Since $|\tilde B_1(\cdot)|\le1$, the above conditions on the
function $f$ imply that all integrals in (\ref{eq:Fi}) exist, hence
$F(h)$ is well defined for all $h>0$. Moreover, from (\ref{eq:Fi})
it follows that
%
\begin{equation}\label{h1}
F(h)=O(h^{-2}),\qquad \Delta_f(h)=O(h^{-1})\qquad (h\to0).
\end{equation}

Estimates (\ref{beta}) and (\ref{h1}) imply that
$\widehat{F}(s)$ as defined in (\ref{Mel}) is a regular function for
$2\!<\!\Re s\!<\!\beta$. Let us now note that for such $s$ we can rewrite~(\ref{Mel})~as
%
\begin{eqnarray}\label{M(s)}\qquad
\widehat{F}(s)&=&\int_1^\infty h^{s-1}F(h) \,\dif h+
\int_0^1 h^{s-1}F(h) \,\dif h\nonumber\\
&=&\int_1^\infty h^{s-1}F(h) \,\dif h +\int_0^1
h^{s-3} \,\dif h \int_{\RR^2_+}
f(x) \,\dif x +\int_0^1 h^{s-1} \Delta_f(h) \,\dif h\\\vadjust{\goodbreak}
&=&\int_1^\infty h^{s-1}F(h) \,\dif h
+\frac{1}{s-2}\int_{\RR^2_+} f(x) \,\dif x +\int_0^1 h^{s-1}
\Delta_f(h) \,\dif h.\nonumber
\end{eqnarray}
According to condition (\ref{beta}), the first term on the
right-hand side of (\ref{M(s)}), as a function of $s$, is regular
for $\Re s<\beta$, whereas the last term is regular for $\Re s>1$ by
(\ref{h1}). Hence, formula (\ref{M(s)}) furnishes an analytic
continuation of the function $\widehat{F}(s)$ into the strip $1<\Re
s<\beta$, where it is meromorphic and, moreover, has a single
(simple) pole at point $s=2$. Finally, observing that
\[
\frac{1}{s-2}=-\int_1^\infty h^{s-3} \,\dif h,\qquad \Re s<2,
\]
and rearranging the terms in (\ref{M(s)}) using (\ref{Delta}), we
obtain (\ref{Muntz2}).
\end{pf*}
\begin{lemma}\label{lm:Delta2}
Under the conditions of Lemma \ref{lm:Delta1},
%
\begin{equation}\label{eq:inverse}
\Delta_f(h) =\frac{1}{2\pi i}\int_{c-i\infty}^{c+i\infty} h^{-s}
\widehat{F}(s) \,\dif s,\qquad 1<c<2.
\end{equation}
\end{lemma}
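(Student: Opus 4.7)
The plan is to derive the inversion formula (\ref{eq:inverse}) from the M\"untz-type identity (\ref{Muntz2}) by converting the Mellin transform into a Fourier transform and then appealing to Fourier inversion. The key observation is that (\ref{Muntz2}) already identifies $\widehat{F}(s)$ as the Mellin transform of $\Delta_f$ on the strip $1<\Re s<2$, so what remains is to invert this Mellin transform.

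First I would record the necessary growth bounds on $\Delta_f$. From the proof of Lemma \ref{lm:Delta1} we already have $\Delta_f(h)=O(h^{-1})$ as $h\to 0^+$, while the defining relation $\Delta_f(h)=F(h)-h^{-2}\int_{\RR_+^2}f(x)\,\dif{}x$ together with the hypothesis $F(h)=O(h^{-\beta})$ (with $\beta>2$) yields $\Delta_f(h)=O(h^{-2})$ as $h\to\infty$. In particular, for every $c\in(1,2)$ the function $u\mapsto e^{-cu}\Delta_f(e^{-u})$ is absolutely integrable on $\RR$.

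Next I would make the change of variable $h=e^{-u}$ in (\ref{Muntz2}). Setting $g(u):=e^{-cu}\Delta_f(e^{-u})$, this rewrites the identity as
\begin{equation*}
\widehat{F}(c+it)=\int_{-\infty}^{\infty} g(u)\myp e^{-itu}\,\dif{}u,
\end{equation*}
i.e., $\widehat{F}(c+i\,\cdot\,)$ is the Fourier transform of $g\in L^1(\RR)$. Fourier inversion would then give $g(u)=(2\pi)^{-1}\!\int_{-\infty}^{\infty}\widehat{F}(c+it)\myp e^{itu}\,\dif{}t$, and substituting $h=e^{-u}$ back, together with $\dif{}s=i\,\dif{}t$ along the vertical line $\Re s=c$, produces exactly (\ref{eq:inverse}).

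The main obstacle is justifying Fourier inversion, for which one needs decay of $\widehat{F}(c+it)$ as $|t|\to\infty$ sufficient to make $\widehat{F}(c+i\,\cdot\,)\in L^1(\RR)$. I would obtain this by using the smoothness of $f$: performing integration by parts in the defining Mellin integral (\ref{Mel}), or equivalently in the equivalent representation (\ref{Muntz2}), and exploiting the fact that under our assumptions $\Delta_f$ inherits enough regularity from $f$ (via the Euler--Maclaurin representation (\ref{eq:Fi})) to give a bound of the form $\widehat{F}(c+it)=O(|t|^{-2})$ as $|t|\to\infty$, with vanishing boundary terms guaranteed by the growth estimates on $\Delta_f$ near $0$ and $\infty$ recorded above. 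Once this decay is in hand, both $g$ and $\hat{g}$ lie in $L^1(\RR)$, classical Fourier inversion applies pointwise, and the argument is complete; alternatively, the continuity of $\Delta_f$ would allow one to invoke Mellin inversion in the principal-value sense, bypassing the need for the stronger $L^1$ bound.
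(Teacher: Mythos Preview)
Your proposal is correct and follows essentially the same approach as the paper: both arguments first record the growth bounds $\Delta_f(h)=O(h^{-1})$ as $h\to0$ and $\Delta_f(h)=O(h^{-2})$ as $h\to\infty$ (ensuring absolute convergence of (\ref{Muntz2}) in the strip $1<\Re s<2$), and then invoke Mellin inversion. The only difference is presentational: the paper dispatches the inversion step in one line by citing Widder's Mellin inversion theorem, whereas you unpack this via the standard substitution $h=e^{-u}$ reducing to Fourier inversion, and discuss the decay of $\widehat{F}(c+it)$ needed for the $L^1$ route (with the principal-value alternative you mention being exactly what the cited theorem provides).
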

\begin{pf}
From (\ref{Delta}), (\ref{eq:Fi}) we have $\Delta_f(h)=O(h^{-2})$ as
$h\to\infty$. Combined with estimate (\ref{h1}) established in
the proof of Lemma \ref{lm:Delta1}, this implies that the integral
in (\ref{Muntz2}) converges absolutely in the strip $1<\Re s<2$.
Representation (\ref{eq:inverse}) then follows from (\ref{Muntz2})
by the inversion formula for the Mellin transform (see
\cite{Widder}, Theorem 9a, pages 246 and 247).
\end{pf}

\subsection{\texorpdfstring{Proof of Theorem  \protect\ref{th:5.1}}{Proof of Theorem 5.1}}
\label{sec5.2}

Let us consider the first coordinate, $\xi_1$ (for~$\xi_2$ the proof
is similar). The proof consists of several steps.

\subsubsection*{Step 1}
According to (\ref{E_1}) we have
%
\begin{equation}\label{E_z''}
\EE_{z}^{r}(\xi_1)=\sum_{k, m=1}^\infty\frac{\mu
(m)}{k}
F(km),
\end{equation}
where [see (\ref{f+}), (\ref{F})]
\begin{eqnarray*}
F(h)&=&\sum_{x\in\ZZ^2_+} f(hx)=\frac{rh
\e^{-\alpha_1h}}{(1-\e^{-\alpha_1h})^2(1-\e^{-\alpha
_2h})} ,\qquad
h>0,\\
f(x)&=&rx_1\e^{-\langle\alpha,x\rangle}, \qquad x\in\RR^2_+.\vadjust{\goodbreak}
\end{eqnarray*}
Note that
\[
\int_{\RR^2_+}f(x) \,\dif x=r\int_0^\infty
x_1\e^{-\alpha_1x_1} \,\dif x_1\int_0^\infty
\e^{-\alpha_2x_2} \,\dif x_2= \frac{r}{\alpha_1^2
\alpha_2} .
\]
Moreover, using (\ref{eq:alpha2:1}) we have
%
\begin{equation}\label{eq:int(f)}
\sum_{k, m=1}^\infty
\frac{\mu(m)}{k}\cdot\frac{r}{(km)^2\alpha_1^2\alpha_2}=
\frac{n_1}{\kappa^3}\sum_{k=1}^\infty\frac{1}{k^3}\sum
_{m=1}^\infty
\frac{\mu(m)}{m^2}=n_1\vadjust{\goodbreak}
\end{equation}
[cf. (\ref{zeta32})]. Subtracting (\ref{eq:int(f)}) from
(\ref{E_z''}), we obtain the representation
%
\begin{equation}\label{dif1}
\EE_{z}^{r}(\xi_1)-n_1=\sum_{k, m=1}^\infty
\frac{\mu(m)}{k}
\Delta_{f}(km),
\end{equation}
where $\Delta_f(h)$ is defined in (\ref{Delta}). Clearly, the
functions $f$ and $F$ satisfy the hypotheses of Lemma
\ref{lm:Delta1} (with $\beta=\infty$). Setting $c_n:=n_2/n_1$ and
using (\ref{eq:alpha2:1}), the Mellin transform of $F(h)$ defined by
(\ref{Mel}) can be represented as
\[
\widehat{F}(s)=r\alpha_1^{-(s+1)}\widetilde F(s),
\]
where
%
\begin{equation}\label{tildeM}
\widetilde{F}(s):=\int_0^\infty\frac{y^s
\e^{-y}}{(1-\e^{-y})^2  (1-\e^{-y/c_n})} \,\dif y,\qquad \Re
s>2.
\end{equation}
As a result, applying Lemma \ref{lm:Delta2} we can rewrite
(\ref{dif1}) as
%
\begin{eqnarray}\label{dif2}
\EE_{z}^{r}(\xi_1)-n_1=\frac{r}{2\pi i} \sum_{k,
m=1}^\infty
m\mu(m) \int_{c-i\infty}^{c+i\infty}
\frac{\widetilde{F}(s)}{\alpha_1^{s+1}(km)^{s+1}} \,\dif s\nonumber\\[-8pt]\\[-8pt]
&&\eqntext{(1<
c< 2).}
\end{eqnarray}

\subsubsection*{Step 2}
It is not\vspace*{2pt} difficult to find explicitly the analytic continuation of
the function $\widetilde{F}(s)$ into the domain $1<\Re s<2$. To this
end, let us represent~(\ref{tildeM}) as
%
\begin{equation}\label{tildeM1}
\widetilde{F}(s)=J(s)+c_n\int_0^\infty\frac{y^{s-1}\e^{-y}}
{(1-\e^{-y})^2} \,\dif y+\frac{1}{2}\int_0^\infty\frac{y^s
\e^{-y}}{(1-\e^{-y})^2} \,\dif y,
\end{equation}
where
%
\begin{equation}\label{J}
J(s):=\int_0^\infty\frac{y^s\e^{-y}}
{(1-\e^{-y})^2}\biggl(\frac{1}{1-\e^{-y/c_n}}-\frac{c_n}{y}-\frac
12\biggr)
\,\dif y.
\end{equation}
The last two integrals in (\ref{tildeM1}) are easily evaluated
%
\begin{eqnarray}\label{in_1}\hspace*{28pt}
\int_0^\infty\frac{y^{s-1}\e^{-y}}{(1-\e^{-y})^2} \,\dif y
&=&\int_0^\infty y^{s-1}\sum_{k=1}^\infty k\e^{-ky}
\,\dif y
=\sum_{k=1}^\infty k\int_0^\infty y^{s-1}\e^{-ky} \,\dif
y\nonumber\\[-8pt]\\[-8pt]
&=&\sum_{k=1}^\infty\frac{1}{k^{s-1}} \int_0^\infty
u^{s-1}\e^{-u} \,\dif u =\zeta(s-1) \Gamma(s),\nonumber
\end{eqnarray}
where $\Gamma(s)=\int_0^\infty u^{s-1}\e^{-u} \,\dif u$ is the
gamma function, and similarly
%
\begin{equation}\label{in_2}
\int_0^\infty\frac{y^s\e^{-y}}{(1-\e^{-y})^2} \,\dif y=
\zeta(s) \Gamma(s+1).
\end{equation}
Substituting expressions (\ref{in_1}) and (\ref{in_2}) into
(\ref{tildeM1}), we obtain
%
\begin{equation}\label{tildeM2}
\widetilde{F}(s)=J(s)+
c_n\zeta(s-1) \Gamma(s)+\tfrac{1}{2}
\zeta(s) \Gamma(s+1).
\end{equation}

Since the expression in the parentheses in (\ref{J}) is $O(y)$ as
$y\to0$ and $O(1)$ as $y\to\infty$, the integral in (\ref{J}) is
absolutely convergent [and therefore the function $J(s)$ is regular]
for $\Re s>0$. Furthermore, it is well known that $\Gamma(s)$ is
analytic for $\Re s>0$ (\cite{Titch2}, Section 4.41, page 148), while
$\zeta(s)$ has a~single pole at point $s=1$ (\cite{Titch2},
Section 4.43, page 152). Hence, the right-hand side of (\ref{tildeM2}) is
meromorphic in the half-plane $\Re s>0$ with poles at $s=1$ and
$s=2$.

\subsubsection*{Step 3}
Let us estimate the function $\widetilde{F}(c+it)$ as $t\to\infty$.
First, by integration by parts in (\ref{J}) it is easy to show
that, uniformly in a strip $0<c_1\le\sigma\le c_2<\infty$,
%
\begin{equation}\label{J(it)}
J(\sigma+it)= O(|t|^{-2}),\qquad  t\to\infty.
\end{equation}
The gamma function in such a strip is known to satisfy a uniform
estimate
%
\begin{equation}\label{GFE}
\Gamma(\sigma+it)=O(1) |t|^{\sigma-(1/2)}
\e^{-\pi|t|/2}, \qquad t\to\infty
\end{equation}
(see \cite{Titch2}, Section 4.42, page 151). Furthermore, the zeta
function is obviously bounded in any half-plane $\sigma\ge
c_1>1$
%
\begin{equation}\label{zeta_2}
|\zeta(\sigma+it)|\le\sum_{n=1}^\infty
\frac{1}{|n^{\sigma+it}|} =\sum_{n=1}^\infty
\frac{1}{n^{\sigma}}\le
\sum_{n=1}^\infty\frac{1}{n^{c_1}}=O(1).
\end{equation}
We also have the following bounds, uniform in $\sigma$, on the
growth of the zeta function as $t\to\infty$ (see \cite{Iv},
Theorem 1.9, page 25):
%
\begin{equation}\label{zeta_123}
\zeta(\sigma+it)=\cases{
O(\ln |t|), &\quad $1\le
\sigma\le2$,\vspace*{2pt}\cr
O\bigl(t^{(1-\sigma)/2} \ln |t|\bigr), &\quad
$0\le\sigma\le1$,\vspace*{2pt}\cr
O(t^{1/2-\sigma} \ln |t|), &\quad $\sigma
\le0$.}
\end{equation}
As a result, by (\ref{GFE}), (\ref{zeta_2}) and (\ref{zeta_123}) the
second and third summands on the right-hand side of (\ref{tildeM2})
give only exponentially small contributions as compared to
(\ref{J(it)}), so that
%
\begin{equation}\label{M(it)}
\widetilde{F}(c+it)= O(|t|^{-2}),\qquad  t\to\infty\
(1<c<2).\vspace*{3pt}
\end{equation}

\subsubsection*{Step 4}
In view of (\ref{M(it)}), for $1<c<2$ there is an absolute
convergence on the right-hand side of (\ref{dif2}),
\begin{eqnarray*}
&&\sum_{k, m=1}^\infty m|\mu(m)| \int_{c-i\infty
}^{c+i\infty}
\frac{|\widetilde{F}(s)|}{|\alpha_1^{s+1}(km)^{s+1} |}
 |\dif s|\\
&&\qquad\le\frac{1}{\alpha_1^{c+1}} \sum_{k=1}^\infty\frac{1}{k^{c+1}}
\sum_{m=1}^{\infty}\frac{1}{m^c}
\int_{-\infty}^{\infty}|\widetilde{F}(c+it)| \,\dif t<\infty.\vadjust{\goodbreak}
\end{eqnarray*}\vfill\eject
Hence, the summation and integration in (\ref{dif2}) can be
interchanged to yield
%
\begin{eqnarray}\label{dif3}
\EE_{z}^{r}(\xi_1)-n_1&=& \frac{r}{2\pi i}
\int_{c-i\infty}^{c+i\infty} \frac{\widetilde{F}(s)}{\alpha_1^{s+1}}
\sum_{k=1}^\infty\frac{1}{k^{s+1}}
\sum_{m=1}^\infty\frac{\mu(m)}{m^{s}} \,\dif s\nonumber\\[-8pt]\\[-8pt]
&=&\frac{r}{2\pi i}\int_{c-i\infty}^{c+i\infty}
\frac{\widetilde{F}(s)\zeta(s+1)}{\alpha_1^{s+1}\zeta
(s)} \,\dif s.\nonumber
\end{eqnarray}
While evaluating the sum over $m$ here, we used the M\"{o}bius
inversion formula~(\ref{Mebius_f}) with $F^\sharp(h)=h^{-s}$,
 $F(h)=\sum_m (hm)^{-s}=h^{-s}\zeta(s)$ (cf.~(\ref{zeta32}); see
also~\cite{HW}, Theorem 287, page 250). Substituting (\ref{tildeM2})
into (\ref{dif3}), we finally obtain
%
\begin{equation}\label{dif4}
\EE_{z}^{r}(\xi_1)-n_1=\frac{r}{2\pi i}\int_{c-i\infty}^{c+i\infty}
\varPsi(s) \,\dif s \qquad(1<c<2),
\end{equation}
where
%
\begin{equation}\label{Psi}
\varPsi(s):=\frac{\zeta(s+1)}{\alpha_1^{s+1}}\biggl[\frac{J(s)+
c_n\zeta(s-1)\Gamma(s)}{\zeta(s)}+\frac
{1}{2} \Gamma(s+1)\biggr],
\end{equation}
and the function $J(s)$ is given by (\ref{J}).

\subsubsection*{Step 5}
By the La Vall\'{e}e Poussin theorem (see \cite{Ka},
Section 4.2, Theorem~5, page 69), there exists a constant $A>0$ such that
$\zeta(\sigma+it)\ne0$ in the domain
%
\begin{equation}\label{eta}
\sigma\ge1-\frac{A}{\ln (|t|+2)}=:\eta(t),\qquad
t\in\RR.
\end{equation}
Moreover, it is known (see \cite{Titch1}, equation (3.11.8), page 60)
that in the domain (\ref{eta}) the following uniform estimate holds:
%
\begin{equation}\label{zeta_1}
\frac{1}{\zeta(\sigma+it)}=O(\ln |t|), \qquad  t\to
\infty.
\end{equation}
Without loss of generality, one can assume $A<\ln2$, so that [see
(\ref{eta})]
\[
\eta(t)\ge\eta(0)=1-\frac{A}{\ln2}>0, \qquad  t\in\RR.
\]
Therefore, $\varPsi(s)$ [see (\ref{Psi})] is regular for all
$s=\sigma+it$ such that $2>\sigma\ge\eta(t)$ ($t\in\RR$).

Let us show that the integration contour $\Re s=c$ in (\ref{dif4})
can be replaced by the curve $\sigma=\eta(t)$ ($t\in\RR$). By the
Cauchy theorem, it suffices to check that
\[
\lim_{T\to\pm\infty}\int_{\eta(T)+iT}^{c+iT}\varPsi(s) \,\dif s=0.
\]
We have
%
\begin{equation}\label{iT}
\biggl|\int_{\eta(T)+iT}^{c+iT}\varPsi(s) \,\dif s\biggr|\le
\int_{\eta(T)}^{c}|\varPsi(\sigma+iT)| \,\dif \sigma\le
\int_{\eta(0)}^{c}|\varPsi(\sigma+iT)| \,\dif \sigma.\hspace*{-25pt}
\end{equation}
In view of the remark after formula (\ref{zeta_1}), we have
$\eta(0)>0$, hence application of estimate (\ref{zeta_2}) gives,
for $s=\sigma+iT$,  $\eta(T)\le\sigma\le c$,
\[
\biggl|\frac{\zeta(s+1)}{\alpha_1^{s+1}}\biggr|\le
\frac{\zeta(\sigma+1)}{\alpha_1^{\sigma+1}}\le
\frac{\zeta(\eta(0)+1)}{\alpha_1^{c+1}}
\]
(since $\alpha_1\to0$, we may assume that $\alpha_1<1$).

To estimate the expression in the square brackets in (\ref{Psi}), we
use estimates (\ref{J(it)}), (\ref{GFE}), (\ref{zeta_123}) and
(\ref{zeta_1}). As a result, we obtain
%
\begin{equation}\label{Psi(iT)}
\varPsi(\sigma+iT)=O(|T|^{-2}\ln |T|
),\qquad
T\to\pm\infty,
\end{equation}
which implies that the right-hand side of (\ref{iT}) tends to zero
as $T\to\pm\infty$, as required. Therefore, the integral in
(\ref{dif4}) can be rewritten in the form
%
\begin{equation}\label{Lambda}
D_n:=\int_{-\infty}^\infty\varPsi\bigl(\eta(t)+it\bigr) \,\dif \bigl(\eta(t)+it\bigr).
\end{equation}

\subsubsection*{Step 6}
It remains to estimate the integral in (\ref{Lambda}) as $n\to\infty$. Let us~set
%
\begin{eqnarray}\label{Psi0}
\varPsi_0(s):\!&=& \alpha_1^{s+1}\varPsi(s)\nonumber\\[-8pt]\\[-8pt]
& = & \zeta(s+1)\biggl[\frac{J(s)+
c_n\zeta(s-1)\Gamma(s)}{\zeta(s)}+\frac
{1}{2} \Gamma(s+1)\biggr]\nonumber
\end{eqnarray}
[see (\ref{Psi})], then equation (\ref{Lambda}) is rewritten as
\[
D_n=\alpha_1^{-2}\int_{-\infty}^\infty\alpha_1^{1-\eta(t)-it}
\varPsi_0\bigl(\eta(t)+it\bigr) \bigl(\eta^{\prime
}(t)+i\bigr) \,\dif t.
\]
Using that $\alpha_1=\delta_1/n^{1/3}$ [see (\ref{alpha})], we get
%
\begin{eqnarray}\label{|Psi|}
|D_n|&=& O(n_1^{2/3}) \int_{-\infty}^\infty
\alpha_1^{1-\eta(t)}
\bigl|\varPsi_0\bigl(\eta(t)+it\bigr)\bigr| \bigl(|\eta^{\prime
}(t)|+1\bigr) \,\dif t\nonumber\\[-8pt]\\[-8pt]
&=& O(n_1^{2/3}) \int_{-\infty}^\infty
\alpha_1^{1-\eta(t)}  \bigl|\varPsi_0\bigl(\eta(t)+it\bigr)\bigr| \,\dif t,\nonumber
\end{eqnarray}
since by (\ref{eta})
\[
|\eta^{\prime }(t)|=\frac{A}{(|t|+2)\ln
^2(|t|+2)}\le
\frac{A}{2\ln^22}=O(1).
\]

Let us now note that, as $n\to\infty$, the integrand function in
(\ref{|Psi|}) tends to zero for each $t$, because $\alpha_1\to0$ and
$1-\eta(t)>0$ [see (\ref{eta})]. Finally, eligibility of passing to
the limit under the integral sign follows from Lebesgue's dominated
convergence theorem. Indeed, the integrand function in (\ref{|Psi|})
is bounded by $|\varPsi_0(\eta(t)+it)|$, and integrability of the
latter is easily checked by applying estimates (\ref{J(it)}),
(\ref{GFE}), (\ref{zeta_123}) and (\ref{zeta_1}) to expression\vadjust{\goodbreak}
(\ref{Psi0}), which yields [cf.~(\ref{Psi(iT)})]
\[
\bigl|\varPsi_0\bigl(\eta(t)+it\bigr)\bigr|=O(|t|^{-2}\ln |t|
),\qquad  t\to\pm\infty.
\]
Thus, we have shown that the integral in (\ref{|Psi|}) is $o(1)$ as
$n\to\infty$, hence $D_n=o(|n|^{2/3})$. Substituting this
estimate into (\ref{dif4}), we obtain the statement of Theorem
\ref{th:5.1}. The proof is complete.

\vspace*{-3pt}\section{Asymptotics of higher-order moments}\vspace*{-3pt}\label{sec6}

\subsection{The variance}\label{sec6.1}

According to the second formula in (\ref{eq:nu}), we have
%
\begin{equation}\label{D_z}
\Var[\nu(x)]=\frac{rz^x}{(1-z^x)^2}=r\sum_{k=1}^\infty kz^{kx}.
\end{equation}
Let $K_z:=\Cov(\xi,\xi)$ be the covariance matrix (with respect to
the measure~$\QQ_z^r$) of the random vector $\xi=\sum_{x\in\calX}
x\nu(x)$. Recalling that the random variables $\nu(x)$ are
independent for different $x\in\calX$ and using (\ref{D_z}), we see
that the elements $K_z(i,j)=\Cov(\xi_i,\xi_j)$ of the matrix $K_z$
are given by
%
\begin{equation}\label{D_zxi}
K_z(i,j)=\sum_{x\in\calX}x_i x_j \Var[\nu(x)]= r\sum_{k=1}^\infty
\sum_{x\in\calX} k x_i x_j z^{kx},\qquad i,j\in\{1,2\}.\hspace*{-20pt}
\end{equation}

\begin{theorem}\label{th:K}
As  $n\to\infty$,
%
\begin{equation}\label{eq:Sigma}
K_z(i,j)\sim\frac{(n_1n_2)^{2/3}}{r^{1/3}\kappa}  B_{ij},\qquad
i,j\in\{1,2\},
\end{equation}
where $\kappa$ is defined in Theorem \ref{th:delta12} and
the matrix $B:=(B_{ij})$ is given by
%
\begin{equation}\label{eq:Sigma1}
B=\pmatrix{2n_1/n_2&1\cr
1&2n_2/n_1}.
\end{equation}
\end{theorem}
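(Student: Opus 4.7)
The plan is to mirror the M\"obius-inversion argument from the proof of Theorem~\ref{th:delta12}, now applied to quadratic moment sums. For $i,j\in\{1,2\}$, set $f_{ij}(x):=x_ix_j\mypp\e^{-\langle\alpha,x\rangle}$, so that (\ref{D_zxi}) takes the form $K_z(i,j)=r\sum_{k\ge 1}F_{ij}^\sharp(k)/k$, where $F_{ij}^\sharp(h):=\sum_{x\in\calX}f_{ij}(hx)$. Extending the summation to $\ZZ_+^2$ separates into a product of univariate series; for instance,
\begin{equation*}
F_{11}(h):=\sum_{x\in\ZZ_+^2}f_{11}(hx)=\frac{h^2\mypp\e^{-h\alpha_1}(1+\e^{-h\alpha_1})}{(1-\e^{-h\alpha_1})^3\myp(1-\e^{-h\alpha_2})},
\end{equation*}
and closed-form expressions of the same shape hold for $F_{12}$ and $F_{22}$. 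The M\"obius inversion formula (\ref{Mebius_f}) then yields the master representation
\begin{equation*}
K_z(i,j)=r\sum_{k,\myp m=1}^\infty \frac{\mu(m)}{k}\,F_{ij}(km).
\end{equation*}

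The next step is rescaling and termwise passage to the limit. For each fixed $(k,m)$, as $\alpha_1,\alpha_2\to 0$,
\begin{equation*}
F_{11}(km)\sim\frac{2}{(km)^2\alpha_1^3\alpha_2},\qquad F_{12}(km)\sim\frac{1}{(km)^2\alpha_1^2\alpha_2^2},\qquad F_{22}(km)\sim\frac{2}{(km)^2\alpha_1\alpha_2^3},
\end{equation*}
while Lemma~\ref{lm:2.3} supplies a uniform bound of the form $F_{ij}(km)=O\bigl((km)^{-2}\alpha_1^{-a}\alpha_2^{-b}\bigr)$ for appropriate exponents $(a,b)$ depending on $(i,j)$. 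The corresponding rescaled summands are consequently dominated by a summable multiple of $k^{-3}m^{-2}$, uniformly in $n$, so Lebesgue's dominated convergence theorem permits termwise passage to the limit. The resulting double sum collapses via $\zeta(3)\cdot(1/\zeta(2))=\kappa^3$ exactly as in (\ref{zeta32}), giving
\begin{equation*}
K_z(1,1)\sim\frac{2r\kappa^3}{\alpha_1^3\alpha_2},\qquad K_z(1,2)\sim\frac{r\kappa^3}{\alpha_1^2\alpha_2^2},\qquad K_z(2,2)\sim\frac{2r\kappa^3}{\alpha_1\alpha_2^3}.
\end{equation*}

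Finally, inserting $\alpha_1=\kappa r^{1/3}n_2^{1/3}/n_1^{2/3}$ and $\alpha_2=\kappa r^{1/3}n_1^{1/3}/n_2^{2/3}$ from (\ref{alpha})--(\ref{delta12}) converts each of the three quantities above into $(n_1n_2)^{2/3}B_{ij}/(r^{1/3}\kappa)$ with $B$ as in (\ref{eq:Sigma1}). I do not anticipate any substantive new obstacle compared with Theorem~\ref{th:delta12}: the only delicate point is the justification of Lebesgue's dominated convergence for the signed double series involving $\mu(m)$, and this is handled by the very bound of Lemma~\ref{lm:2.3} that was used in the calibration step.
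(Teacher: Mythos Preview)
Your proof is correct and follows essentially the same route as the paper: both arguments rewrite $K_z(i,j)$ via M\"obius inversion as a double sum $r\sum_{k,m}\mu(m)\,k^{-1}F_{ij}(km)$ over $\ZZ_+^2$, evaluate the inner geometric-type sums in closed form, invoke Lemma~\ref{lm:2.3} to obtain the uniform $O(k^{-3}m^{-2})$ bound justifying dominated convergence, and finish by substituting the calibrated $\alpha_j$ from (\ref{alpha})--(\ref{delta12}). The paper carries this out only for $K_z(1,1)$ and remarks that the other entries are similar, whereas you treat all $(i,j)$ simultaneously, but the substance is identical.
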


\begin{pf}
Let us consider $K_z(1,1)$ (the other elements of $K_z$ are analyzed
in a similar manner). Substituting (\ref{alpha}) into (\ref{D_zxi}),
we obtain
%
\begin{equation}\label{D_zxi+}
K_z(1,1)=r\sum_{k=1}^\infty\sum_{x\in
\calX}kx_1^2\e^{-k\langle\alpha,x\rangle}.
\end{equation}
Using the M\"{o}bius inversion formula (\ref{Mebius_f}), similarly to
(\ref{E_1'}) expression~(\ref{D_zxi+}) can be rewritten in the form
%
\begin{eqnarray}\label{D1_1(t)}
K_z(1,1)&=&r\sum_{k, m=1}^\infty km^2\mu(m)
\sum_{x\in\ZZ^2_+}
x_1^2\e^{-km\langle\alpha,x\rangle}\nonumber\\
&=&r\sum_{k, m=1}^\infty km^2\mu(m) \sum
_{x_1=1}^\infty
x_1^2\e^{-km\alpha_1 x_1} \sum_{x_2=0}^{\infty}
\e^{-km\alpha_2 x_2}\\
&=&r\sum_{k, m=1}^\infty
\frac{km^2\mu(m)}{1-\e^{-km\alpha_2}} \sum_{x_1=1}^\infty
x_1^2\e^{-km\alpha_1x_1}.\nonumber\vadjust{\goodbreak}
\end{eqnarray}
Note also that
%
\begin{equation}\label{eq:neweq}
\sum_{x_1=1}^\infty x_1^2\e^{-km\alpha_1x_1}
=\frac{\e^{-km\alpha_1}(1+\e^{-km\alpha_1 })}{(1-\e
^{-km\alpha_1 })^3}
=\frac{O(1)}{k^3m^3\alpha_1^3} .
\end{equation}
Returning to representation (\ref{D1_1(t)}) and using
(\ref{eq:neweq}), we obtain
%
\begin{equation}\label{eq:neweq1}
\alpha_1^3 \alpha_2K_z(1,1)= r\sum
_{k, m=1}^\infty
km^2\mu(m)\frac{\alpha_1^3\alpha_2
\e^{-km\alpha_1}(1+\e^{-km\alpha_1 })}
{(1-\e^{-km\alpha_1 })^3(1-\e
^{-km\alpha_2 })} .
\end{equation}
By Lemma \ref{lm:2.3}, the general term in the series
(\ref{eq:neweq1}) admits a uniform estimate $O(k^{-3}
m^{-2})$. Hence, by Lebesgue's dominated convergence theorem
one can pass to the limit in (\ref{eq:neweq1}) to obtain
\[
\alpha_1^3 \alpha_2K_z(1,1) \to
\frac{2r\zeta(3)}{\zeta(2)}=2r\kappa^3,\qquad
\alpha_1,\alpha_2\to0.
\]
Using (\ref{alpha}) and (\ref{delta12}), this yields
\[
K_z(1,1)\sim\frac{2 n_1/n_2}{r^{1/3}\kappa}
 (n_1n_2)^{2/3}, \qquad n\to\infty,
\]
as required [cf. (\ref{eq:Sigma}), (\ref{eq:Sigma1})].
\end{pf}

\subsection{\texorpdfstring{Statistical moments of $\nu(x)$}{Statistical moments of nu(x)}}\label{sec6.2}

Denote
%
\begin{equation}\label{eq:nu0}
\nu_0(x):=\nu(x)-\EE_{z}^{r}[\nu(x)], \qquad x\in\calX,
\end{equation}
and for $k\in\NN$ set
%
\begin{equation}\label{eq:m-mu}
m_k(x):=\EE_{z}^{r} [\nu(x)^k],\qquad
\mu_k(x):=\EE_{z}^{r} |\nu_0(x)^k|
\end{equation}
(for notational simplicity, we suppress the dependence on $r$ and
$z$).
\begin{lemma}\label{lm:mu<m}
For each  $k\in\NN$ and all  $x\in\calX$,
%
\begin{equation}\label{eq:mu<m}
\mu_k(x)\le2^k m_k(x).
\end{equation}
\end{lemma}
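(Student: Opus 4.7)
The plan is to bound $|\nu_0(x)|^k$ pointwise by a sum of two easily-controlled nonnegative quantities, then take expectation. Since $\nu(x) \ge 0$, we have by the triangle inequality
\begin{equation*}
|\nu_0(x)| = |\nu(x) - \EE_z^r[\nu(x)]| \le \nu(x) + \EE_z^r[\nu(x)],
\end{equation*}
and the standard convexity inequality $(a+b)^k \le 2^{k-1}(a^k + b^k)$ (valid for $a,b \ge 0$, $k \in \NN$) then yields
\begin{equation*}
|\nu_0(x)|^k \le 2^{k-1}\bigl(\nu(x)^k + \EE_z^r[\nu(x)]^k\bigr).
\end{equation*}

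Next I would take the $\QQ_z^r$-expectation on both sides. The first term on the right contributes $m_k(x)$, and the second is a constant. To bound the constant by $m_k(x)$ as well, I would invoke Jensen's inequality: since the map $t \mapsto t^k$ is convex on $[0,\infty)$ (as $k \ge 1$) and $\nu(x) \ge 0$ almost surely,
\begin{equation*}
\EE_z^r[\nu(x)]^k \le \EE_z^r\bigl[\nu(x)^k\bigr] = m_k(x).
\end{equation*}
Combining these two estimates gives $\mu_k(x) \le 2^{k-1}(m_k(x) + m_k(x)) = 2^k m_k(x)$, which is \eqref{eq:mu<m}.

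There is no serious obstacle here; the argument is a straightforward two-line bound using the triangle inequality, a convexity bound for $(a+b)^k$, and Jensen. The only thing to verify carefully is the nonnegativity of $\nu(x)$, which is immediate from the construction in Section \ref{sec2.1} (where $\nu(x) \in \ZZ_+$), and the integrability of $\nu(x)^k$ under $\QQ_z^r$, which follows from the explicit negative binomial form \eqref{Qb} (all moments are finite since $z^x < 1$).
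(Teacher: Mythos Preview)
Your proof is correct and shares the paper's starting point: bound $|\nu_0(x)|\le \nu(x)+m_1(x)$ via the triangle inequality, then control the $k$-th moment. The paper, however, expands $(\nu+m_1)^k$ by Newton's binomial formula and bounds each cross term $m_i\,m_1^{k-i}$ by $m_k$ using Lyapunov's inequality $m_i\le m_k^{i/k}$, so that the binomial coefficients sum to $2^k$. Your route replaces the full expansion by the convexity bound $(a+b)^k\le 2^{k-1}(a^k+b^k)$ and then applies Jensen once to get $m_1^k\le m_k$. The two arguments are equivalent in spirit and yield the same constant $2^k$; yours is a bit more streamlined, while the paper's expansion makes explicit how every mixed moment is dominated by $m_k$.
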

\begin{pf}
Omitting for brevity the argument $x$, by Newton's binomial formula
and Lyapunov's inequality we obtain
\begin{eqnarray*}
\mu_k\le\EE_{z}^{r}[(\nu+m_1)^k]&=&\sum_{i=0}^k
\pmatrix{k\cr i} m_i m_1^{k-i}\\
&\le&\sum_{i=0}^k \pmatrix{k\cr i}m_k^{i/k}
m_k^{(k-i)/k}=2^k
m_k,
\end{eqnarray*}
and (\ref{eq:mu<m}) follows.
\end{pf}
\begin{lemma}\label{lm:6.2}
For each  $k\in\NN$, there exist positive constants
$c_k=c_{k}(r)$ and $C_k=C_{k}(r)$ such that,
for all
$x\in\calX$,
%
\begin{equation}\label{t=0}
\frac{c_{k}z^{kx}}{(1-z^x)^k}\le
m_k(x)\le\frac{C_{k}z^{x}}{(1-z^x)^k} .
\end{equation}
\end{lemma}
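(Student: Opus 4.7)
The plan is to exploit the explicit negative binomial form of $\nu(x)$ under $\QQ_z^r$, as recorded in (\ref{Qb}) and (\ref{Phi}). Writing $q:=z^x\in(0,1)$ for brevity, my first step would be to compute the \emph{factorial} moments of $\nu(x)$ in closed form by differentiating the probability generating function $s\mapsto(1-q)^r(1-sq)^{-r}$ at $s=1$. A direct calculation yields, for every $j\ge1$,
\begin{equation*}
\EE_z^r\bigl[\nu(x)(\nu(x)-1)\cdots(\nu(x)-j+1)\bigr]
=r(r+1)\cdots(r+j-1)\,\frac{q^j}{(1-q)^j}\mypp.
\end{equation*}

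Next I would convert ordinary moments into factorial moments via Stirling numbers of the second kind $S(k,j)\ge 0$, using the identity $\nu^k=\sum_{j=1}^{k}S(k,j)\,\nu(\nu-1)\cdots(\nu-j+1)$. This produces the representation
\begin{equation*}
m_k(x)=\sum_{j=1}^{k}S(k,j)\,r(r+1)\cdots(r+j-1)\,\frac{q^j}{(1-q)^j}\mypp,
\end{equation*}
where every summand is non-negative.

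The lower bound in (\ref{t=0}) then follows by retaining only the term $j=k$, in which $S(k,k)=1$, giving $c_k=r(r+1)\cdots(r+k-1)$. For the upper bound, I would use the two elementary estimates $q^j\le q$ and $(1-q)^{-j}\le(1-q)^{-k}$, both valid for $1\le j\le k$ and $q\in(0,1)$, to factor out $q/(1-q)^k$ from the entire sum. This immediately yields the upper inequality with
\begin{equation*}
C_k=\sum_{j=1}^{k}S(k,j)\,r(r+1)\cdots(r+j-1).
\end{equation*}

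There is no genuine obstacle here: once one recognises that $\nu(x)$ is negative binomial, everything reduces to a closed-form PGF computation plus the standard Stirling-number identity. The only mild point to verify carefully is that the two elementary estimates on $q^j$ and $(1-q)^{-j}$ hold uniformly in $x\in\calX$ (which they do, since $q=z^x\in(0,1)$ for all such $x$ by Assumption \ref{as:z}).
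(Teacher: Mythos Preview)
Your argument is correct and essentially parallel to the paper's, which obtains the same representation $m_k(x)=\sum_{j=1}^k c_{j,k}\,z^{jx}(1-z^x)^{-j}$ by differentiating the characteristic function (\ref{p.f}) and proving inductively that the coefficients $c_{j,k}$ are positive, then bounding exactly as you do. The only difference is cosmetic: you work with the probability generating function and identify the coefficients explicitly as $c_{j,k}=S(k,j)\,r(r+1)\cdots(r+j-1)$ via the Stirling-number identity, whereas the paper leaves them implicit and verifies positivity by induction (in fact the paper's recursion $c_{j,k+1}=j\myp c_{j,k}+(r+j-1)\myp c_{j-1,k}$ matches the Stirling recursion, so the constants coincide).
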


\begin{pf}
Fix $x\in\calX$ and let $\varphi(s)\equiv
\varphi_{\nu(x)}(s):=\EE_{z}^{r}[\e^{is\nu(x)}]$ be the
characteristic function of the random variable $\nu(x)$ with respect
to the measure~$\QQ_{z}^{r}$. From (\ref{Phi}) it follows that
%
\begin{equation}\label{p.f}
\varphi(s)=\frac{\beta^{r}(z^x\e^{is})}{\beta^{r}(z^x)}=
\frac{(1-z^x)^r}{(1-z^x\e^{is})^r} .
\end{equation}
Let us first prove that for any $k\in\NN$
%
\begin{equation}\label{f'}
(1-z^x)^{-r} \,\frac{\dif^k\varphi(s)}{\dif s^k}=i^k\sum_{j=1}^k
c_{j, k}  \frac{(z^{x}\e^{is})^j}{(1-z^x\e
^{is})^{r+j}},
\end{equation}
where  $c_{j, k}\equiv c_{j, k}(r)>0$.
Indeed, if $k=1$
then differentiation of (\ref{p.f}) yields
\[
(1-z^x)^{-r} \,\frac{\dif\varphi(s)}{\dif s}
=\frac{irz^x\e^{is}}{(1-z^x\e^{is})^{r+1}} ,
\]
which is in accordance with (\ref{f'}) if we put $c_{1,1}:=r$.
Assume now that~(\ref{f'}) is valid for some $k$. Differentiating
(\ref{f'}) once more, we obtain
\begin{eqnarray*}
(1-z^x)^{-r} \,\frac{\dif^{k+1}\varphi(s)}{\dif s^{k+1}}&=&i^{k+1}
\sum_{j=1}^{k}
c_{j, k} \frac{j(z^{x}\e^{is})^j}{(1-z^x\e
^{is})^{r+j}}\\
&&{} +i^{k+1}\sum_{j=1}^{k} c_{j, k}
\frac{(r+j) (z^{x}\e^{is})^{j+1}}{(1-z^x\e^{is})^{r+j+1}}\\
&=&i^{k+1}\sum_{j=1}^{k+1} c_{j, k+1}
\frac{(z^{x}\e^{is})^j}{(1-z^x\e^{is})^{r+j}} ,
\end{eqnarray*}
where we have set
\[
c_{j, k+1}:=\cases{
c_{1, k},&\quad $j=1$,\cr
jc_{j, k}+(r+j-1)
c_{j-1, k}, &\quad$2\le j\le k$,\cr
(r+k)c_{k, k},&\quad $j=k+1$.}
\]
Hence, by induction, formula (\ref{f'}) is valid for all $k$.

Now, by (\ref{f'}) we have
\[
m_k(x)=i^{-k}  \,\frac{\dif^k\varphi(s)}{\dif s^k} \bigg|_{s=0}
=\sum_{j=1}^k c_{j, k}  \frac{z^{jx}}{(1-z^x)^j}\le
\frac{z^x}{(1-z^x)^k}\sum_{j=1}^k c_{j, k},
\]
since $0\!<\!z^x\!<\!1$. Hence, inequalities (\ref{t=0}) hold with
$c_k\!=\!c_{k, k}$,  $C_k\!=\!\sum_{j=1}^k c_{j,
k}$.\vadjust{\goodbreak}~%
\end{pf}

\subsection{Asymptotics of the moment sums}\label{sec6.3}

\begin{lemma}\label{lm:sum(z)}
For any  $k\in\NN$ and  $\theta>0$,
%
\begin{equation}\label{eq:z^k}
\sum_{x\in\calX} |x|^k \frac{z^{\theta x}}{(1-z^x)^k}\asymp
|n|^{(k+2)/3}, \qquad n\to\infty.
\end{equation}
\end{lemma}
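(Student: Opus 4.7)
The plan is to parameterize via $z_j = e^{-\alpha_j}$ with $\alpha_j = \delta_j n_j^{-1/3}$ as in (\ref{alpha})--(\ref{delta12}), so that under Assumption \ref{as:c} one has $\alpha_1 \asymp \alpha_2 \asymp |n|^{-1/3}$. The sum takes the form
$$S_n = \sum_{x \in \calX} |x|^k \frac{e^{-\theta \langle \alpha, x \rangle}}{(1 - e^{-\langle \alpha, x \rangle})^k}.$$
The natural scale is $|x| \asymp \alpha^{-1} \asymp |n|^{1/3}$, on which $\langle\alpha,x\rangle = O(1)$, the factor $(|x|/\langle\alpha,x\rangle)^k \asymp |n|^{k/3}$, and one expects $\asymp |n|^{2/3}$ coprime lattice points to contribute. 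I would establish matching upper and lower bounds of order $|n|^{(k+2)/3}$ separately.

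For the \emph{upper bound}, I would drop the coprimality constraint and apply Lemma \ref{lm:2.3} with parameters $k$ and $\theta$:
$$S_n \leq C \sum_{x \in \ZZ_+^2 \setminus \{0\}} \frac{|x|^k\, e^{-\theta\langle\alpha,x\rangle/2}}{\langle\alpha,x\rangle^k}.$$
Since $|x| \leq x_1+x_2 \leq \langle\alpha,x\rangle/\min(\alpha_1,\alpha_2) \leq C'|n|^{1/3}\langle\alpha,x\rangle$, this yields
$$S_n \leq C''\, |n|^{k/3} \sum_{x \in \ZZ_+^2 \setminus \{0\}} e^{-\theta\langle\alpha,x\rangle/2} \asymp \frac{|n|^{k/3}}{\alpha_1 \alpha_2} \asymp |n|^{(k+2)/3},$$
using the factorization of the remaining sum into two geometric series.

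For the \emph{lower bound}, I would restrict to coprime lattice points lying in the triangular strip $R_n := \{x \in \RR_+^2 : 1 \leq \langle\alpha,x\rangle \leq 2\}$. On $R_n$, both $e^{-\theta\langle\alpha,x\rangle}$ and $(1-e^{-\langle\alpha,x\rangle})^{-k}$ are bounded between positive constants, and $|x| \asymp \alpha^{-1} \asymp |n|^{1/3}$, so
$$S_n \;\geq\; c\, |n|^{k/3}\, N_n, \qquad N_n := \#(\calX \cap R_n).$$
To count $N_n$, I would use Möbius inversion (since $\ZZ_+^2 \setminus \{0\} = \bigsqcup_{d \geq 1} d\calX$):
$$N_n = \sum_{d \geq 1} \mu(d)\, M_0(d\alpha), \qquad M_0(\beta) := \#\{x \in \ZZ_+^2 \setminus \{0\}: \langle\beta,x\rangle \in [1,2]\}.$$
Standard lattice-point counting for convex regions gives $M_0(d\alpha) = \mathrm{area}(R_n)/d^2 + O(\mathrm{perim}(R_n)/d)$, with $\mathrm{area}(R_n) \asymp 1/(\alpha_1\alpha_2) \asymp |n|^{2/3}$ and $\mathrm{perim}(R_n) \asymp \alpha^{-1} \asymp |n|^{1/3}$; moreover, $M_0(d\alpha) = 0$ as soon as $2/d < \min(\alpha_1,\alpha_2)$, i.e.\ for $d \gtrsim |n|^{1/3}$, so the Möbius sum truncates naturally. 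Summing yields
$$N_n = \frac{6}{\pi^2}\,\mathrm{area}(R_n) + O\bigl(|n|^{1/3}\log|n|\bigr) \asymp |n|^{2/3},$$
and hence $S_n \gtrsim |n|^{(k+2)/3}$.

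The main obstacle is this lower bound: the Möbius series must be truncated around $d \asymp |n|^{1/3}$, and the lattice-point-count error in each dilate $R_n/d$ must be controlled uniformly so that the cumulative error $O(|n|^{1/3}\log|n|)$ remains negligible compared with the main term of order $|n|^{2/3}$. The upper bound, by contrast, reduces to an elementary geometric-series estimate via Lemma \ref{lm:2.3}.
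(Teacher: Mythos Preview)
Your argument is correct, but the paper's route to the lower bound is somewhat cleaner. The paper bounds the summand pointwise from below by
\[
\frac{e^{-\theta\langle\alpha,x\rangle}}{(1-e^{-\langle\alpha,x\rangle})^k}\ge \frac{e^{-\theta\langle\alpha,x\rangle}}{\langle\alpha,x\rangle^k}\ge \frac{e^{-\theta\langle\alpha,x\rangle}}{|\alpha|^k\,|x|^k},
\]
so that the factor $|x|^k$ cancels and both the upper and lower bounds reduce to the single asymptotic $\sum_{x\in\calX} e^{-c\langle\alpha,x\rangle}\asymp |n|^{2/3}$ (for any fixed $c>0$). This last sum is then evaluated directly by M\"obius inversion applied to the exponential sum itself, $\sum_{x\in\calX}e^{-c\langle\alpha,x\rangle}=\sum_{m\ge1}\mu(m)\bigl[(1-e^{-cm\alpha_1})^{-1}(1-e^{-cm\alpha_2})^{-1}-1\bigr]$, whose general term is $O(m^{-2}\alpha_1^{-1}\alpha_2^{-1})$; dominated convergence then gives the exact asymptotic $\sim(c^2\alpha_1\alpha_2\,\zeta(2))^{-1}$. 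By contrast, you restrict to a strip and count coprime lattice points via M\"obius plus Gauss-type area--perimeter estimates, which is perfectly valid but requires tracking the boundary error $O(|n|^{1/3}\log|n|)$ across $O(|n|^{1/3})$ dilates. The paper's analytic approach avoids this bookkeeping entirely and yields the sharp constant for free; your geometric approach is more hands-on but self-contained, needing no appeal to dominated convergence. Your upper bound, incidentally, is slightly simpler than the paper's, since dropping coprimality lets you factor the sum without any M\"obius step.
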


\begin{pf}
Using (\ref{alpha}), by Lemma \ref{lm:2.3} we have
%
\begin{equation}\label{eq:z^k1}
\frac{z^{\theta x}}{(1-z^x)^k}=
\frac{\e^{-\theta\langle\alpha,x\rangle}}{(1-\e^{-\langle\alpha
,x\rangle})^k}\le
\frac{C \e^{-(\theta/2)\langle\alpha,x\rangle
}}{\langle\alpha,x\rangle^k}\le
\frac{C \e^{-(\theta/2)\langle\alpha,x\rangle
}}{\alpha_0^k
|x|^k},
\end{equation}
where $\alpha_0:=\min\{\alpha_1,\alpha_2\}$. On the other hand,
%
\begin{equation}\label{eq:z^k2}
\frac{z^{\theta x}}{(1-z^x)^k}=
\frac{\e^{-\theta\langle\alpha,x\rangle}}{(1-\e^{-\langle\alpha
,x\rangle})^k}\ge
\frac{\e^{-\theta\langle\alpha,x\rangle}}{\langle\alpha
,x\rangle^k}\ge
\frac{\e^{-\theta\langle\alpha,x\rangle}}{|\alpha|^k
|x|^k} .
\end{equation}
Since $\alpha_0\asymp|n|^{-1/3}$ and $|\alpha|\asymp|n|^{-1/3}$,
from (\ref{eq:z^k1}) and (\ref{eq:z^k2}) we see that for the proof
of (\ref{eq:z^k}) it remains to show
%
\begin{equation}\label{eq:z^k3}
\sum_{x\in\calX} \e^{-\langle\alpha,x\rangle}\asymp|n|^{2/3},
 \qquad n\to\infty.
\end{equation}

Using the M\"{o}bius inversion formula (\ref{Mebius_f}), similarly as
in Sections \ref{sec3} and~\ref{sec4} we obtain
%
\begin{eqnarray}\label{eq:m}
\sum_{x\in\calX}
\e^{-\langle\alpha,x\rangle}&=&\sum_{m=1}^\infty\mu(m)
\sum_{x\in\ZZ^2_+\setminus\{0\}}
\e^{-m\langle\alpha,x\rangle}\nonumber\\
&=&\sum_{m=1}^\infty\mu(m)
\biggl(\frac{1}{(1-\e^{-m\alpha_1})(1-\e^{-m\alpha_2})}-1\biggr)\\
&=&\sum_{m=1}^\infty
\mu(m) \frac{\e^{-m\alpha_1}+\e^{-m\alpha_2}-
\e^{-m(\alpha_1+\alpha_2)}}{(1-\e^{-m\alpha_1})(1-\e^{-m\alpha
_2})} .\nonumber
\end{eqnarray}
By Lemma \ref{lm:2.3}, the general term of the series (\ref{eq:m})
is $O(\alpha_1^{-1}\alpha_2^{-1}m^{-2})$ (uniformly in
$m$). Hence, by Lebesgue's dominated convergence theorem, the
right-hand side of (\ref{eq:m}) is asymptotically equivalent to
\[
\frac{1}{\alpha_1\alpha_2}\sum_{m=1}^\infty\frac{\mu(m)}{m^2}
=\frac{1}{\alpha_1\alpha_2\zeta(2)}\asymp|n|^{2/3},
\]
and (\ref{eq:z^k3}) follows.
\end{pf}
\begin{lemma}\label{lm:6.3}
For any  $k\in\NN$,
\[
\sum_{x\in\calX} |x|^k m_k(x)\asymp|n|^{(k+2)/3},\qquad
n\to\infty.
\]
\end{lemma}
\begin{pf}
The proof readily follows from the estimates (\ref{t=0}) and Lem\-ma~\ref{lm:sum(z)}.
\end{pf}
\begin{lemma}\label{lm:muk}
For any integer  $k\ge2$,
\[
\sum_{x\in\calX}|x|^k\mu_k(x)\asymp|n|^{(k+2)/3},\qquad
n\to\infty.
\]
\end{lemma}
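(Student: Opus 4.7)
The strategy is to prove the upper and lower bounds separately, using the already-established Lemmas \ref{lm:mu<m}, \ref{lm:6.3} and \ref{lm:sum(z)} together with Lyapunov's inequality.

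For the upper bound, I would invoke Lemma \ref{lm:mu<m}, which gives $\mu_k(x)\le 2^k m_k(x)$ uniformly in $x\in\calX$, and then apply Lemma \ref{lm:6.3} directly to obtain
\begin{equation*}
\sum_{x\in\calX}|x|^k\mu_k(x)\le 2^k\sum_{x\in\calX}|x|^k m_k(x)=O\bigl(|n|^{(k+2)/3}\bigr),\qquad n\to\infty.
\end{equation*}

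For the matching lower bound, the key observation is that since $k\ge 2$, Lyapunov's inequality $(\EE|X|^p)^{1/p}$ nondecreasing in $p>0$, applied to $X=\nu_0(x)$, yields $\mu_2(x)^{1/2}\le\mu_k(x)^{1/k}$, i.e.,
\begin{equation*}
\mu_k(x)\ge \mu_2(x)^{k/2},\qquad x\in\calX.
\end{equation*}
The variance $\mu_2(x)=\Var[\nu(x)]$ is given explicitly by the second formula in (\ref{eq:nu}) as $\mu_2(x)=rz^x/(1-z^x)^2$, hence
\begin{equation*}
\mu_k(x)\ge \frac{r^{k/2}\myp z^{(k/2)x}}{(1-z^x)^k}.
\end{equation*}
Summing against $|x|^k$ and applying Lemma \ref{lm:sum(z)} with $\theta=k/2>0$ produces
\begin{equation*}
\sum_{x\in\calX}|x|^k\mu_k(x)\ge r^{k/2}\sum_{x\in\calX}|x|^k\frac{z^{(k/2)x}}{(1-z^x)^k}\asymp |n|^{(k+2)/3},
\end{equation*}
which matches the upper bound and completes the proof.

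There is no real obstacle here: both directions follow from inequalities already prepared in Section~\ref{sec6}, with Lyapunov's inequality supplying the missing link from $\mu_k$ down to $\mu_2$, which in turn has a closed form that fits directly into the template of Lemma \ref{lm:sum(z)}. The only point that deserves a brief remark is that the argument requires $k\ge 2$ (so that Lyapunov gives a lower bound on $\mu_k$ in terms of $\mu_2$), which is exactly the hypothesis of the lemma.
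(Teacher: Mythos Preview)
Your proof is correct and follows essentially the same route as the paper: the upper bound via Lemma~\ref{lm:mu<m} and Lemma~\ref{lm:6.3}, and the lower bound via Lyapunov's inequality, the explicit variance formula~(\ref{eq:nu}), and Lemma~\ref{lm:sum(z)} with $\theta=k/2$.
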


\begin{pf}
An upper bound follows (for all $k\ge1$) from inequality
(\ref{eq:mu<m}) and Lemma \ref{lm:6.3}. On the other hand, by
Lyapunov's inequality and formula~(\ref{D_z}), for any $k\ge2$ we
have
\[
\mu_k(x)\ge\mu_2(x)^{k/2}= (\Var[\nu(x)])^{k/2}=
\frac{r^{k/2}z^{kx/2}}{(1-z^x)^k} ,
\]
and a lower bound follows by Lemma \ref{lm:sum(z)}.
\end{pf}
\begin{lemma}\label{lm:6.6}
For each  $k\in\NN$ and  $j=1,2$
\[
\EE_{z}^{r} [\xi_j-\EE_{z}^{r}(\xi_j)
]^{2k}
=O(|n|^{4k/3}),\qquad  n\to\infty.
\]
\end{lemma}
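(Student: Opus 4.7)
The plan is to reduce the $2k$-th central moment of $\xi_j=\sum_{x\in\calX}x_j\mypp\nu(x)$ to the moment sums estimated in Lemma~\ref{lm:muk}. Under $\QQ_z^r$ the random variables $\nu_0(x)=\nu(x)-\EE_z^r[\nu(x)]$ are independent (inherited from the product structure of $\QQ_z^r$) and centred. Expanding the $2k$-th power and pushing the expectation through,
\[
\EE_z^r\bigl[\xi_j-\EE_z^r(\xi_j)\bigr]^{2k}
=\sum_{x_1,\dots,x_{2k}\in\calX}x_{j,1}\cdots x_{j,2k}\,
\EE_z^r\bigl[\nu_0(x_1)\cdots\nu_0(x_{2k})\bigr].
\]
By independence, the inner expectation factorises over the distinct values appearing among $x_1,\dots,x_{2k}$; and because $\EE_z^r[\nu_0(x)]=0$, any value occurring exactly once annihilates the whole term.

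Organising the sum by the equality pattern $\pi=\{B_1,\dots,B_m\}$ of $(x_1,\dots,x_{2k})$ (a set partition of $\{1,\dots,2k\}$ with all block sizes $a_i:=|B_i|\ge 2$, hence $\sum_i a_i=2k$) and denoting by $y_i\in\calX$ the common value on $B_i$, I would then bound each partition's contribution in absolute value by dropping the injectivity constraint on $y_1,\dots,y_m$ and using $|y_{j,i}|\le|y_i|$ together with $\bigl|\EE_z^r[\nu_0(y)^{a_i}]\bigr|\le\mu_{a_i}(y)$, giving
\[
\prod_{i=1}^{m}\sum_{y\in\calX}|y|^{a_i}\mu_{a_i}(y)
=O\Bigl(\prod_{i=1}^{m}|n|^{(a_i+2)/3}\Bigr)
=O\bigl(|n|^{(2k+2m)/3}\bigr),
\]
where Lemma~\ref{lm:muk} is applied to each factor (its hypothesis $a_i\ge 2$ is precisely what the centering step has guaranteed). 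Since $a_i\ge 2$ forces $m\le k$, the exponent satisfies $(2k+2m)/3\le 4k/3$; and since there are only finitely many partitions of $\{1,\dots,2k\}$, summing preserves the bound $O\bigl(|n|^{4k/3}\bigr)$.

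The only delicate step is the independence-plus-centering reduction to partitions without singleton blocks: it both keeps us in the range $a_i\ge 2$ where Lemma~\ref{lm:muk} is applicable, and delivers the sharp ``Gaussian'' exponent $4k/3$, attained precisely on the all-pairs partition ($m=k$). As an equivalent alternative, one could invoke Rosenthal's inequality for sums of independent centred random variables, which gives directly $\EE_z^r\bigl|\xi_j-\EE_z^r(\xi_j)\bigr|^{2k}\le C_k\max\bigl\{\sum_{x\in\calX}|x|^{2k}\mu_{2k}(x),\,\bigl(\sum_{x\in\calX}|x|^{2}\Var[\nu(x)]\bigr)^{k}\bigr\}$; here the first term is $O\bigl(|n|^{(2k+2)/3}\bigr)$ by Lemma~\ref{lm:muk}, the second is $O\bigl(|n|^{4k/3}\bigr)$ by Theorem~\ref{th:K}, and the latter dominates for every $k\ge 1$.
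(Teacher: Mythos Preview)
Your argument is correct and mirrors the paper's proof: both expand the $2k$-th power, use independence and centering to restrict to blocks (parts) of size $\ge 2$, bound each block's contribution via Lemma~\ref{lm:muk}, and conclude from $m\le k$ that the total exponent is at most $4k/3$. The Rosenthal-inequality alternative you mention is a tidy shortcut not used in the paper, but your primary route is essentially identical to the original.
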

\begin{pf}
Let $j=1$ (the case $j=2$ is considered similarly). Using the
notation (\ref{eq:nu0}), we obtain, by the multinomial expansion,
%
\begin{eqnarray}\label{eq:sum-prod}\quad
&&\EE_{z}^{r} [\xi_1-\EE_{z}^{r}(\xi
_1)]^{2k} \nonumber\\
&&\qquad=
\EE_{z}^{r}\biggl( \sum_{x\in\calX}
x_1\nu_0(x)\biggr)^{2k}\\
&&\qquad=\sum_{\ell=1}^{2k}
\mathop{\sum_{k_1,\ldots,k_\ell\ge1,}}_{k_1 +\cdots+k_\ell=2k}
C_{k_1 ,\ldots,k_\ell}
\sum_{\{x^{1},\ldots,
x^{\ell}\}\subset\calX}
\prod_{i=1}^\ell(x_1^i)^{k_i}
\EE_{z}^{r} [\nu_0(x^i)^{k_i}],\nonumber
\end{eqnarray}
where $C_{k_1 ,\ldots,k_\ell}$ are
combinatorial coefficients
accounting for the number of identical terms in the expansion. Using
that $\EE_{z}^{r}[\nu_0(x)]=0$, we can assume that $k_i\ge2$ for
all $i=1,\ldots,\ell$. Since $k_1+\cdots+k_\ell=2k$, this implies
that $\ell\le k$. Hence, recalling the notation (\ref{eq:m-mu}) and
using Lemma \ref{lm:muk}, we see that the internal sum in
(\ref{eq:sum-prod}) (over $\{x^1,\ldots,x^\ell\}\subset\calX$) is
bounded by
\begin{eqnarray*}
\sum_{\{x^{1},\ldots,
x^{\ell}\}\subset\calX}
\prod_{i=1}^\ell|x^i|^{k_i}
\mu_{k_i}(x^i)
&\le&\prod_{i=1}^\ell\sum_{x\in\calX}
|x|^{k_i} \mu_{k_i}(x)\\
&=&O(1)\prod_{i=1}^\ell|n|^{(k_i+2)/3}
=O(1)\cdot|n|^{2(k+\ell)/3}\\
&=&O(|n|^{4k/3}),
\end{eqnarray*}
and the lemma is proved.
\end{pf}

\section{Local limit theorem}\label{sec7}

As was explained in the Introduction (see Section~\ref{sec1.3}), the
role of a local limit theorem in our approach is to yield the
asymptotics of the probability
$\QQ_z^r\{\xi=n\}\equiv\QQ_z^r(\CP_n)$ appearing in the
representation of the measure $\PP^r_n$ as a conditional
distribution, $\PP_n^r(A)=\QQ_z^r(A |\CP_n)
=\QQ_z^r(A)/\QQ_z^r(\CP_n)$, $A\subset\CP_n$ [see (\ref{condP1})].

\subsection{Statement of the theorem}\label{sec7.1}

As before, we denote $a_z:=\EE_{z}^{r}(\xi)$, $K_z:= \Cov(\xi,\xi)=
\EE_{z}^{r} (\xi-a_z )^{ \topp}(\xi
-a_z )$, where the
random vector $\xi=(\xi_1,\xi_2)$ is defined in (\ref{eq:xi}). From
(\ref{D_zxi}), it is easy to see (e.g., using the Cauchy--Schwarz
inequality together with the characterization of the equality case)
that the matrix $K_z$ is positive definite; in particular, $\det
K_z>0$ and hence $K_z$ is invertible. Let $V_z=K_z^{-1/2}$ be the
(unique) square root of the matrix $K_z^{-1}$ (see, e.g.,
\cite{Bellman}, Chapter 6, Section 5, pages 93 and 94), that is, a
symmetric, positive definite matrix such that $V_z^2=K_z^{-1}$.

Denote by $f_{0,I}(\cdot)$ the density of a standard two-dimensional
normal distribution $\mathcal{N}(0,I)$ (with zero mean and identity
covariance matrix),
\[
f_{0,I}(x)=\frac{1}{2\pi} \e^{-|x|^2 /2}, \qquad x\in
\RR^2.
\]
Then the density of the normal distribution $\mathcal{N}(a_z,K_z)$
(with mean $a_z$ and covariance matrix $K_z$) is given by
%
\begin{equation}\label{eq:phi1}
f_{a_z ,K_z}(x)= (\det K_z)^{-1/2}f_{0,I}
\bigl((x-a_z)
V_z\bigr), \qquad x\in\RR^2.
\end{equation}

With this notation, we can now state our local limit theorem.
\begin{theorem}\label{th:LCLT}
Uniformly in  $m\in\ZZ^2_+$,
%
\begin{equation}\label{eq:LCLT}
\QQ_{z}^{r}\{\xi=m\}=f_{a_z ,K_z}(m)+O
(|n|^{-5/3}),\qquad
n\to\infty.
\end{equation}
\end{theorem}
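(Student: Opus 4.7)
I would prove Theorem \ref{th:LCLT} by the classical Fourier inversion method for local central limit theorems on a lattice. Since $\xi=\sum_{x\in\calX}x\myp\nu(x)$ takes values in $\ZZ_+^2$ and the $\{\nu(x)\}_{x\in\calX}$ are independent under $\QQ_z^r$,
$$\QQ_z^r\{\xi=m\}=\frac{1}{(2\pi)^2}\int_{[-\pi,\pi]^2}\e^{-i\langle t,m\rangle}\,\varphi(t)\,\dif{}t,\qquad \varphi(t):=\prod_{x\in\calX}\varphi_{\nu(x)}(\langle t,x\rangle),$$
where $\varphi_{\nu(x)}(s)=(1-z^x)^r(1-z^x\e^{is})^{-r}$ by (\ref{p.f}). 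The Gaussian density admits the analogous Fourier representation on $\RR^2$ with $\psi(t):=\exp\bigl(i\langle t,a_z\rangle-\tfrac{1}{2}\langle tK_z,t^{\myn\topp}\rangle\bigr)$, and its contribution from outside $[-\pi,\pi]^2$ is super-polynomially small, since the eigenvalues of $K_z$ are of order $|n|^{4/3}$ by Theorem \ref{th:K}. Thus it remains to estimate $\int_{[-\pi,\pi]^2}\e^{-i\langle t,m\rangle}\bigl(\varphi(t)-\psi(t)\bigr)\,\dif{}t$ uniformly in $m$, which I split across three regions defined by the radii $\rho_1:=|n|^{-2/3}\log|n|$ and a small fixed $\epsilon>0$.

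On the central disc $U_1=\{|t|\le\rho_1\}$ I expand
$$\ln\varphi_{\nu(x)}(s)=is\myp m_1(x)-\tfrac{1}{2}s^2\mu_2(x)+R_3(x,s),\qquad |R_3(x,s)|\le C|s|^3\mu_3(x),$$
and sum over $x\in\calX$. The linear and quadratic contributions reconstruct $\ln\psi(t)$ exactly, thanks to $a_z=\EE_z^r(\xi)$ and $K_z(i,j)=\sum_x x_ix_j\mu_2(x)$. The cumulative cubic remainder is controlled by Lemma \ref{lm:muk} with $k=3$: $\sum_x|x|^3\mu_3(x)=O(|n|^{5/3})$, so $\ln\varphi(t)=\ln\psi(t)+O(|n|^{5/3}|t|^3)$ uniformly on $U_1$. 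Since this correction is $o(1)$, exponentiation gives $\varphi(t)-\psi(t)=\psi(t)\myp O(|n|^{5/3}|t|^3)$, and after the change of variable $t=V_z u$ (with $V_z=K_z^{-1/2}$, so that $\dif{}t\asymp|n|^{-4/3}\dif{}u$ and $|t|\asymp|n|^{-2/3}|u|$) the corresponding part of the integral becomes a standard Gaussian third moment, producing a $U_1$-contribution of order $|n|^{5/3}\cdot|n|^{-4/3}\cdot|n|^{-2}=|n|^{-5/3}$, matching the target precision. On the annulus $U_2=\{\rho_1<|t|\le\epsilon\}$, both $|\psi(t)|$ and $|\varphi(t)|$ are super-polynomially small: for $\psi$, this follows from $\langle tK_z,t^{\myn\topp}\rangle\ge c\myp|n|^{4/3}|t|^2\ge c\log^2|n|$; for $\varphi$, from the exact identity
$$|\varphi_{\nu(x)}(s)|^{-2/r}=1+\tfrac{4}{r}\mypp\mu_2(x)\sin^2(s/2)$$
combined with $\sin^2(s/2)\ge s^2/\pi^2$ on $|s|\le\pi$ and the spectral lower bound from Theorem \ref{th:K} (some care is required to handle $x$ with $|\langle t,x\rangle|>\pi$, but a dyadic subdivision restricted to $|x|\le C/|t|$ suffices).

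The main obstacle is the outer region $U_3=[-\pi,\pi]^2\setminus\{|t|\le\epsilon\}$, where Taylor expansion breaks down and one must extract cancellation from the discrete product through a Cram\'er-type argument. The plan is to fix a finite family $\calX_*\subset\calX$ and a constant $c_0>0$ such that, for every $t\in U_3$, at least $N_0\ge\lceil 5/(2r)\rceil+1$ of the directions $x\in\calX_*$ satisfy $\sin^2(\langle t,x\rangle/2)\ge c_0$; existence of such $\calX_*$ follows by compactness of $U_3$ together with density of the slopes $\{x_2/x_1:x\in\calX\}$ in $[0,\infty]$. For each resonant $x$, Lemma \ref{lm:6.2} gives $\mu_2(x)\asymp|n|^{2/3}$, and the identity above yields $|\varphi_{\nu(x)}(\langle t,x\rangle)|^{2}\le\bigl(1+c'\myp|n|^{2/3}\bigr)^{-r}\le C|n|^{-2r/3}$. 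Multiplying over the $N_0$ resonant directions, $|\varphi(t)|\le C|n|^{-N_0 r/3}=o(|n|^{-5/3})$, so the $U_3$-contribution is negligible. Combining the three region estimates yields (\ref{eq:LCLT}), with uniformity in $m$ automatic since $m$ enters only through the unit-modulus factor $\e^{-i\langle t,m\rangle}$.
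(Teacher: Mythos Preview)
Your overall architecture is the standard one and matches the paper: Fourier inversion for $\QQ_z^r\{\xi=m\}$ and for $f_{a_z,K_z}(m)$, then a region-by-region estimate. The paper, however, uses only \emph{two} regions, separated by the ellipse $|\lambda V_z^{-1}|=L_z^{-1}$ (with $L_z\asymp|n|^{-1/3}$ the Lyapunov coefficient), i.e., roughly $|\lambda|\asymp|n|^{-1/3}$. On the central ellipse it proves the quantitative bound $|\varphi_{\xi_0}(\lambda V_z)-\e^{-|\lambda|^2/2}|\le 16L_z|\lambda|^3\e^{-|\lambda|^2/6}$ (Lemma~\ref{lm:7.2}), essentially your $U_1$ argument pushed all the way to $|\lambda|\asymp|n|^{-1/3}$. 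Outside, it proves $|\varphi_{\xi_0}(\lambda)|\le\exp\bigl(-\tfrac{r}{4}J_\alpha(\lambda)\bigr)$ with $J_\alpha(\lambda)=\sum_{x\in\calX}\e^{-\langle\alpha,x\rangle}(1-\cos\langle\lambda,x\rangle)$ (Lemma~\ref{lm:7.3}), and then bounds $J_\alpha$ from below by summing only over $x=(x_1,1)$ (resp.\ $(1,x_2)$), which reduces to an explicit geometric series and yields $J_\alpha\gtrsim|n|^{1/3}$ uniformly.

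Your compactness argument on $U_3$ is a genuinely different device and is valid: for every $t$ with $|t|\ge\epsilon$ there are infinitely many $x\in\calX$ with $\langle t,x\rangle\notin2\pi\ZZ$, so a fixed finite $\calX_*$ does the job. There is an arithmetic slip, though: you need $|n|^{-N_0 r/3}=o(|n|^{-5/3})$, i.e., $N_0>5/r$, so take $N_0\ge\lceil 5/r\rceil+1$, not $\lceil 5/(2r)\rceil+1$. Your method gives only polynomial decay on $U_3$, whereas the paper's $J_\alpha$ argument gives $\exp(-c|n|^{1/3})$; either suffices here.

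The real gap is your $U_2$ argument. From your identity $|\varphi_{\nu(x)}(s)|^{-2/r}=1+\tfrac{4}{r}\mu_2(x)\sin^2(s/2)$ one obtains $-\tfrac{2}{r}\ln|\varphi(t)|=\sum_x\ln\bigl(1+\tfrac{4}{r}\mu_2(x)\sin^2(\langle t,x\rangle/2)\bigr)$. The bound $\sin^2(s/2)\ge s^2/\pi^2$ replaces the argument by $\tfrac{4}{r\pi^2}\mu_2(x)\langle t,x\rangle^2$, but $\ln(1+y)\ll y$ for large $y$, and for small $|x|$ one has $\mu_2(x)\asymp|n|^{2/3}$; you therefore cannot extract $\langle tK_z,t\rangle=\sum_x\mu_2(x)\langle t,x\rangle^2$ from under the logarithm. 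A ``spectral lower bound'' of the form $-\ln|\varphi(t)|\gtrsim|n|^{4/3}|t|^2$ is recoverable this way only for $|t|\lesssim|n|^{-1/3}$, and for $|n|^{-1/3}\lesssim|t|\le\epsilon$ your compactness argument on $U_3$ (which needs $|t|\ge\epsilon$ fixed) does not apply either. The paper's placement of the central/outer boundary at $|\lambda|\asymp|n|^{-1/3}$ is precisely what eliminates this intermediate zone: the Taylor/Lyapunov estimate holds all the way up, and $J_\alpha\gtrsim|n|^{1/3}$ kicks in immediately beyond. To close your gap you must either push your $U_1$ radius out to $c|n|^{-1/3}$ and run a $J_\alpha$-type estimate inward to the same scale, or supply a new bound on the annulus; the hinted ``dyadic subdivision restricted to $|x|\le C/|t|$'' does not by itself overcome the $\ln(1+y)$ versus $y$ loss.
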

\begin{remark}
Theorem \ref{th:LCLT} is a two-dimensional local central limit
theorem for the sum $\xi=\sum_{x\in\calX} x
\nu(x)$ with
independent terms whose distribution depends on a large parameter
$n=(n_1,n_2)$; however, the summation scheme is rather different
from the classic one, since the number of nonvanishing terms is not
fixed in advance, and, moreover, the summands actually involved in
the sum are determined by sampling.
\end{remark}

One implication of Theorem \ref{th:LCLT} will be particularly
useful.
\begin{corollary}\label{cor:Q} As
$n\to\infty$,
%
\begin{equation}\label{sim}
\QQ_{z}^{r}\{\xi=n\}\sim
\frac{r^{1/3}\kappa}{2\sqrt{3} \pi} (n_1n_2)^{-2/3},
\end{equation}
where  $\kappa=(\zeta(3)/\zeta(2))^{1/3}$.
\end{corollary}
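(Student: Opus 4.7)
The plan is to specialize Theorem~\ref{th:LCLT} to $m=n$ and then compute the resulting Gaussian density $f_{a_z,K_z}(n)$ using the two previously established refined asymptotics: Theorem~\ref{th:5.1} for the centering $a_z$ and Theorem~\ref{th:K} for the covariance matrix $K_z$. The proof breaks into an evaluation of the normalization constant $(2\pi(\det K_z)^{1/2})^{-1}$, a verification that the exponential factor is asymptotically $1$, and a comparison with the error term $O(|n|^{-5/3})$.

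First, I would handle the determinant. From Theorem~\ref{th:K},
\begin{equation*}
K_z \sim \frac{(n_1n_2)^{2/3}}{r^{1/3}\kappa}\,B,\qquad B=\begin{pmatrix}2n_1/n_2 & 1\\ 1 & 2n_2/n_1\end{pmatrix},
\end{equation*}
so that $\det B = 4-1 = 3$ and consequently
\begin{equation*}
\det K_z \sim \frac{3\mypp(n_1n_2)^{4/3}}{r^{2/3}\kappa^2},\qquad (\det K_z)^{1/2}\sim \frac{\sqrt{3}\,(n_1n_2)^{2/3}}{r^{1/3}\kappa}.
\end{equation*}
This already yields the numerical constant $r^{1/3}\kappa/(2\sqrt{3}\myp\pi)$ and the power $(n_1n_2)^{-2/3}$ from the prefactor in (\ref{eq:phi1}), provided the exponential factor converges to~$1$.

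Second, I would bound the exponent. By Theorem~\ref{th:5.1} we have $n-a_z = o(|n|^{2/3})$, while from Theorem~\ref{th:K} the eigenvalues of $K_z$ are of order $|n|^{4/3}$, giving $\|V_z\| = \|K_z^{-1/2}\| = O(|n|^{-2/3})$. Hence
\begin{equation*}
\bigl|(n-a_z)\,V_z\bigr| \le |n-a_z|\cdot\|V_z\| = o(|n|^{2/3})\cdot O(|n|^{-2/3}) = o(1),
\end{equation*}
so $\exp\bigl(-\tfrac12|(n-a_z)V_z|^2\bigr)\to 1$. Combining with the determinant calculation,
\begin{equation*}
f_{a_z,K_z}(n) \sim \frac{1}{2\pi}\cdot\frac{r^{1/3}\kappa}{\sqrt{3}\,(n_1n_2)^{2/3}} = \frac{r^{1/3}\kappa}{2\sqrt{3}\,\pi}\,(n_1n_2)^{-2/3}.
\end{equation*}

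Finally, I would confirm that the remainder in Theorem~\ref{th:LCLT} is of smaller order. Under Assumption~\ref{as:c}, $(n_1n_2)^{2/3}\asymp |n|^{4/3}$, so $f_{a_z,K_z}(n)\asymp |n|^{-4/3}$, which strictly dominates $O(|n|^{-5/3})$. Therefore (\ref{eq:LCLT}) at $m=n$ yields (\ref{sim}), completing the proof. There is no real obstacle: every ingredient has been assembled by the previous theorems, and the only computation needed is the elementary evaluation $\det B = 3$ together with the bookkeeping of powers of $|n|$.
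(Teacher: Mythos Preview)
Your proposal is correct and follows essentially the same approach as the paper's proof: specialize Theorem~\ref{th:LCLT} at $m=n$, use Theorem~\ref{th:5.1} together with the bound $\|V_z\|=O(|n|^{-2/3})$ to show the exponential factor tends to~$1$, and read off the constant from $\det K_z\sim 3(n_1n_2)^{4/3}/(r^{2/3}\kappa^2)$. The only cosmetic difference is that the paper packages the determinant and norm estimates as separate lemmas (Lemmas~\ref{lm:detK} and~\ref{NV}), whereas you derive them directly from Theorem~\ref{th:K}; for the norm bound your claim that both eigenvalues of $K_z$ are of order $|n|^{4/3}$ is justified since $\det B=3$ and $\tr B$ is bounded under Assumption~\ref{as:c}.
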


Before proving the theorem, we have to make some (quite lengthy)
technical preparations, collected below in Sections
\ref{sec7.2}--\ref{sec7.4}.

\subsection{Lemmas about the matrix norm}\label{sec7.2}

The matrix norm induced by the Euclidean vector norm \mbox{$| \cdot |$}
is defined by $\|A\|:={\sup_{|x|=1}}|x A|$. It is well known that for
a (real) square matrix $A$ its norm is given by
%
\begin{equation}\label{eq:norm}
\|A\|=\sqrt{\lambda(A^{\topp}A)} ,
\end{equation}
where $\lambda(\cdot)$ is the spectral radius of a matrix, defined
to be the largest modulus of its eigenvalues (see, e.g.,
\cite{Lancaster}, Section 6.3, pages 210 and 211).

We need some general facts about the matrix norm $\| \cdot \|$.
Even though they are mostly well known, specific references are not
easy to find (cf., e.g., \cite{Bellman,Lancaster,Horn}). For the
reader's convenience, we give neat proofs of the lemmas below based
on the spectral characterization (\ref{eq:norm}).
\begin{lemma}[(cf. \cite{Halmos}, Section 22, Theorem 4, page 40)]\label{lm:7.1.1} If
$A$ is a real matrix, then $\|A^{\topp} A\|=\|A\|^2$.
\end{lemma}
\begin{pf}
The matrix $A^{\topp} A$ is symmetric and nonnegative
definite, hence, using (\ref{eq:norm}), we obtain $\|A^{\topp}
A\|=\lambda(A^{\topp}A)=\|A\|^2$, as claimed.
\end{pf}
\begin{lemma}[(cf. \cite{Horn}, Section 5.6, Problem 23, hints (2,5) and (5,2),
pages~313 and 314)]\label{lm:|A|} If  $A=(a_{ij})$ is a  real
$d\times d$ matrix, then
%
\begin{equation}\label{eq:m-norm}
\frac{1}{d}\sum_{i, j=1}^d a_{ij}^2\le
\|A\|^2\le\sum_{i, j=1}^d a_{ij}^2 .
\end{equation}
\end{lemma}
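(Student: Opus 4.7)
The plan is to reduce both inequalities to a comparison between the maximum and the average of the eigenvalues of $A^{\topp}\mynn A$, exploiting the spectral characterization of $\|A\|$ already recorded in \textup{(\ref{eq:norm})}.

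First I would observe that $A^{\topp}\mynn A$ is symmetric and positive semidefinite, so its eigenvalues $\lambda_1,\dots,\lambda_d$ are non-negative real numbers. Two identities then carry all the weight: on one hand, the Frobenius identity
\begin{equation*}
\tr(A^{\topp}\mynn A)=\sum_{i,\myp{}j=1}^d a_{ij}^2 = \sum_{k=1}^d \lambda_k\myp,
\end{equation*}
obtained by computing the diagonal of $A^{\topp}\mynn A$ directly and comparing with the trace-equals-sum-of-eigenvalues formula; on the other hand, the definition of the spectral radius together with \textup{(\ref{eq:norm})} gives
\begin{equation*}
\|A\|^2=\lambda(A^{\topp}\mynn A)=\max_{1\le k\le d}\lambda_k\myp.
\end{equation*}

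The two bounds in \textup{(\ref{eq:m-norm})} then follow from the trivial inequalities
\begin{equation*}
\frac{1}{d}\sum_{k=1}^d \lambda_k\le \max_{1\le k\le d}\lambda_k \le \sum_{k=1}^d \lambda_k\myp,
\end{equation*}
which are valid precisely because all $\lambda_k$ are non-negative. Substituting the two identities above turns this chain of inequalities into exactly \textup{(\ref{eq:m-norm})}, completing the proof.

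There is no real obstacle here beyond invoking non-negativity of the spectrum of $A^{\topp}\mynn A$; the only thing that needs a word of justification is the Frobenius identity, but this is immediate from $(A^{\topp}\mynn A)_{jj}=\sum_i a_{ij}^2$ summed over $j$, so the entire argument fits in a few lines.
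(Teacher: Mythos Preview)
Your proof is correct and is essentially identical to the paper's: both identify $\sum_{i,j}a_{ij}^2=\tr(A^{\topp}\mynn A)$, use $\|A\|^2=\lambda(A^{\topp}\mynn A)$ from \textup{(\ref{eq:norm})}, and sandwich the spectral radius between the average and the sum of the (non-negative) eigenvalues of $A^{\topp}\mynn A$. The paper just compresses the eigenvalue argument into the single line $\lambda(A^{\topp}\mynn A)\le\tr(A^{\topp}\mynn A)\le d\cdot\lambda(A^{\topp}\mynn A)$.
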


\begin{pf}
Note that $ \sum_{i, j=1}^d a_{ij}^2=\tr(A^{\topp
} A)$,
where $\tr(\cdot)$ denotes the trace, and furthermore
\[
\lambda(A^{\topp} A)\le\tr(A^{\topp}
A)\le d\cdot
\lambda(A^{\topp} A).
\]
Since $\lambda(A^{\topp} A)=\|A\|^2$ by (\ref
{eq:norm}), this
implies (\ref{eq:m-norm}).
\end{pf}

The following simple fact pertaining to dimension $d=2$ seems to be
less known.
\begin{lemma}
\label{lm:7.1.2} Let  $A$ be a symmetric  $2\times2$ matrix with
$\det A\ne0$. Then
%
\begin{equation}\label{eq:d=2}
\|A^{-1}\|=\frac{\|A\|}{| {\det A}|} .
\end{equation}
\end{lemma}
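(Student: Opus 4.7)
The plan is to reduce both sides of \textup{(\ref{eq:d=2})} to expressions in the eigenvalues of $A$ and observe that they coincide in dimension two. First, since $A$ is symmetric, $A^{\topp}\mynn A = A^2$, and the eigenvalues of $A^2$ are the squares of the eigenvalues of $A$. Writing $\lambda_1,\lambda_2\in\RR$ for the (real) eigenvalues of $A$, formula \textup{(\ref{eq:norm})} therefore gives
\begin{equation*}
\|A\|=\sqrt{\lambda(A^2)}=\max\bigl(|\lambda_1|,|\lambda_2|\bigr).
\end{equation*}

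Next I would apply the same observation to $A^{-1}$, which is again symmetric with eigenvalues $\lambda_1^{-1},\lambda_2^{-1}$ (these are well defined since $\det A=\lambda_1\lambda_2\ne 0$). Hence
\begin{equation*}
\|A^{-1}\|=\max\bigl(|\lambda_1|^{-1},|\lambda_2|^{-1}\bigr)
=\frac{1}{\min\bigl(|\lambda_1|,|\lambda_2|\bigr)}.
\end{equation*}

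Finally, the identity $\max(a,b)\cdot\min(a,b)=ab$ for $a,b\ge 0$\,---\,which is the special two-dimensional feature being exploited\,---\,yields
\begin{equation*}
\frac{\|A\|}{|\mynn\det A|}=\frac{\max\bigl(|\lambda_1|,|\lambda_2|\bigr)}{|\lambda_1\myp\lambda_2|}
=\frac{1}{\min\bigl(|\lambda_1|,|\lambda_2|\bigr)}=\|A^{-1}\|,
\end{equation*}
which is \textup{(\ref{eq:d=2})}. There is no real obstacle here; the only point worth emphasizing is that the identity $\max\cdot\min=ab$ is specific to two factors, explaining why the lemma is stated only for $d=2$.
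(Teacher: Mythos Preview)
Your proof is correct and follows essentially the same approach as the paper: both reduce the norms of $A$ and $A^{-1}$ to the moduli of the eigenvalues via the spectral-radius formula \textup{(\ref{eq:norm})}, and then use that the product of the two eigenvalue moduli equals $|\mynn\det A|$. The paper simply orders the eigenvalues as $|\lambda_2|\ge|\lambda_1|$ in place of your $\max/\min$ notation, but the argument is identical.
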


\begin{pf}
Let $\lambda_1$ and $\lambda_2$ ($|\lambda_2|\ge|\lambda_1|>0$) be
the eigenvalues of $A$, then $|{ \det A}|=|\lambda
_1|\cdot
|\lambda_2|$ and, according to (\ref{eq:norm}),
\[
\|A\|=\sqrt{\lambda(A^2)}=|\lambda_2|,\qquad
\|A^{-1}\|=\sqrt{\lambda((A^{-1})^2)}=|\lambda_1|^{-1},
\]
which makes equality (\ref{eq:d=2}) obvious.
\end{pf}

\subsection{Estimates for the covariance matrix}\label{sec7.3}

In this section, we collect some information about the asymptotic
behavior of the matrix $K_z=\Cov(\xi,\xi)$. The next lemma is a
direct consequence of Theorem \ref{th:K}.

\begin{lemma}\label{lm:detK} As
$n\to\infty$,
\[
\det K_z\sim\frac{3(n_1n_2)^{4/3}}{r^{2/3}\kappa^{2}}.
\]
\end{lemma}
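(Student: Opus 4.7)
The plan is to derive the determinant asymptotics by direct substitution of the entrywise asymptotics from Theorem~\ref{th:K}, taking care to verify that the leading contributions do not cancel in the $2\times 2$ determinant.

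Concretely, I will expand $\det K_z = K_z(1,1)\,K_z(2,2) - K_z(1,2)^2$ (using the symmetry $K_z(1,2)=K_z(2,1)$, which is obvious from the definition $K_z(i,j)=\Cov(\xi_i,\xi_j)$). Applying Theorem~\ref{th:K} to each entry gives
\begin{equation*}
K_z(1,1)\sim \frac{(n_1n_2)^{2/3}}{r^{1/3}\kappa}\cdot\frac{2n_1}{n_2},\qquad K_z(2,2)\sim \frac{(n_1n_2)^{2/3}}{r^{1/3}\kappa}\cdot\frac{2n_2}{n_1},
\end{equation*}
and $K_z(1,2)\sim (n_1n_2)^{2/3}/(r^{1/3}\kappa)$. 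Under Assumption~\ref{as:c}, the ratios $n_1/n_2$ and $n_2/n_1$ are bounded above and away from zero, so all four entries of the matrix $B$ are of the same order and one may multiply the asymptotic equivalents. This yields
\begin{equation*}
K_z(1,1)\,K_z(2,2)\sim \frac{(n_1n_2)^{4/3}}{r^{2/3}\kappa^{2}}\cdot 4,\qquad K_z(1,2)^2\sim \frac{(n_1n_2)^{4/3}}{r^{2/3}\kappa^{2}}\cdot 1,
\end{equation*}
both of the same order $(n_1n_2)^{4/3}$.

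The only point requiring a moment's attention is that for two asymptotically equivalent quantities of the same order, their difference is only controlled when the leading coefficients do not cancel. Here the leading coefficient in the difference is $4-1=3>0$, so no cancellation occurs and we may subtract the two asymptotics to conclude
\begin{equation*}
\det K_z\sim \frac{(n_1n_2)^{4/3}}{r^{2/3}\kappa^{2}}\,(4-1)=\frac{3\myp(n_1n_2)^{4/3}}{r^{2/3}\kappa^{2}},
\end{equation*}
as required. There is no real obstacle; the proof is a one-line computation from Theorem~\ref{th:K}, with the only subtlety being the elementary observation that the determinant of $B$ equals $3$ and not $0$, which guarantees that the lower-order terms hidden in the $\sim$ relations do not contaminate the leading asymptotic.
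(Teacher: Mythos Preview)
Your proof is correct and matches the paper's approach: the paper states that the lemma is a direct consequence of Theorem~\ref{th:K} without further detail, and you have simply spelled out that computation. Your explicit remark that $\det B = 4-1 = 3 \ne 0$ ensures the leading terms do not cancel is exactly the point that makes the one-line deduction valid.
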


Let us now estimate the norms of the matrices $K_z$ and
$V_z=K_z^{-1/2}$.

\begin{lemma}\label{lm:K_z}
As  $n\to\infty$, one has  $\|K_z\|\asymp|n|^{4/3}$.
\end{lemma}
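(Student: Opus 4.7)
The plan is to reduce the norm estimate to an elementwise estimate and then cite the asymptotics of the entries of $K_z$ already furnished by Theorem~\ref{th:K}. Since $K_z$ is a $2\times 2$ symmetric matrix, Lemma~\ref{lm:|A|} with $d=2$ gives the two-sided bound
\begin{equation*}
\tfrac{1}{2}\sum_{i,\myp{}j=1}^{2} K_z(i,j)^2\le \|K_z\|^2\le \sum_{i,\myp{}j=1}^{2} K_z(i,j)^2,
\end{equation*}
so it suffices to control the Frobenius-type sum of squared entries.

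Next, by Theorem~\ref{th:K}, each entry satisfies the asymptotic equivalence
\begin{equation*}
K_z(i,j)\sim \frac{(n_1 n_2)^{2/3}}{r^{1/3}\kappa}\,B_{ij},\qquad n\to\infty,
\end{equation*}
where the entries of $B$ are $2\myp n_1/n_2$, $2\myp n_2/n_1$, and $1$. Under Assumption~\ref{as:c} the ratios $n_1/n_2$ and $n_2/n_1$ stay bounded away from both $0$ and $\infty$, so every $B_{ij}$ is bounded and bounded away from zero. In particular, at least one entry (e.g.\ $B_{11}$ or $B_{22}$) contributes order $1$, and none blows up. Consequently, as $n\to\infty$,
\begin{equation*}
\sum_{i,\myp{}j=1}^{2} K_z(i,j)^2 \asymp (n_1 n_2)^{4/3}\asymp |n|^{8/3},
\end{equation*}
where the last step uses $n_1\asymp n_2\asymp |n|$ from Assumption~\ref{as:c}.

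Combining these two displays yields $\|K_z\|^2\asymp |n|^{8/3}$, that is, $\|K_z\|\asymp |n|^{4/3}$, as claimed. There is no real obstacle here: once Theorem~\ref{th:K} is in hand and one invokes the elementary bound of Lemma~\ref{lm:|A|}, the argument is essentially mechanical. The only point to watch is that Assumption~\ref{as:c} is what prevents any entry of the limiting matrix $B$ from degenerating to $0$ or diverging to $\infty$, which is exactly what is needed for the lower bound on $\|K_z\|$.
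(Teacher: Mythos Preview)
Your proof is correct and follows essentially the same approach as the paper: both invoke Lemma~\ref{lm:|A|} to bound $\|K_z\|^2$ by the sum of squared entries, then apply Theorem~\ref{th:K} together with Assumption~\ref{as:c} to conclude that this sum is $\asymp |n|^{8/3}$. Your version simply spells out the role of Assumption~\ref{as:c} in controlling the $B_{ij}$, which the paper leaves implicit.
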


\begin{pf}
Lemma \ref{lm:|A|} and Theorem \ref{th:K} imply
\[
\|K_z\|^2\asymp\sum_{i, j=1}^2 K_z(i,j)^2\asymp
(n_1n_2)^{4/3}\asymp|n|^{8/3}\qquad (n\to\infty),
\]
and the required estimate follows.
\end{pf}

\begin{lemma}\label{NV}
For the matrix  $V_z\!=\!K_z^{-1/2}$, one has  $\|V_z\|\!\asymp\!|n|^{-2/3}$ as  \mbox{$n\,{\to}\,\infty$}.
\end{lemma}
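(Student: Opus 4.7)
The plan is to combine the preceding lemmas of Section \ref{sec7.2} and \ref{sec7.3} in a direct chain. The key observation is that $V_z=K_z^{-1/2}$ is by construction symmetric and positive definite, so $V_z^{\myn\topp} V_z=V_z^2=K_z^{-1}$. Therefore, by Lemma \ref{lm:7.1.1} applied to $A=V_z$,
\begin{equation*}
\|V_z\|^2=\|V_z^{\myn\topp}V_z\|=\|K_z^{-1}\|.
\end{equation*}

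Next I would invoke Lemma \ref{lm:7.1.2}, applicable because $K_z$ is a symmetric $2\times2$ matrix with $\det K_z>0$ (by positive definiteness, as noted at the start of Section \ref{sec7.1}, and reconfirmed quantitatively by Lemma \ref{lm:detK}). This gives
\begin{equation*}
\|K_z^{-1}\|=\frac{\|K_z\|}{\det K_z}.
\end{equation*}

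Finally, plugging in the asymptotic estimates: Lemma \ref{lm:K_z} yields $\|K_z\|\asymp |n|^{4/3}$, and Lemma \ref{lm:detK} yields $\det K_z\asymp (n_1n_2)^{4/3}\asymp |n|^{8/3}$, the last step justified by Assumption \ref{as:c} (which guarantees $n_1\asymp n_2\asymp |n|$). Putting everything together,
\begin{equation*}
\|V_z\|^2=\frac{\|K_z\|}{\det K_z}\asymp\frac{|n|^{4/3}}{|n|^{8/3}}=|n|^{-4/3},
\end{equation*}
and taking square roots yields $\|V_z\|\asymp |n|^{-2/3}$, as required. There is no real obstacle here; the proof is simply an application of the three preparatory lemmas in sequence, and the main content of the estimate is really contained in Theorem \ref{th:K} (asymptotics of the covariance matrix) which powers both Lemma \ref{lm:detK} and Lemma \ref{lm:K_z}.
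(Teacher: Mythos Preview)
Your proof is correct and follows essentially the same approach as the paper: use Lemma~\ref{lm:7.1.1} to get $\|V_z\|^2=\|V_z^2\|=\|K_z^{-1}\|$, then Lemma~\ref{lm:7.1.2} to rewrite this as $\|K_z\|/\det K_z$, and finish with Lemmas~\ref{lm:detK} and~\ref{lm:K_z}. The paper's version is just more terse.
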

\begin{pf}
Using Lemmas \ref{lm:7.1.1} and \ref{lm:7.1.2} we have
\[
\|V_z\|^2=\|V_z^2\|=\|K_z^{-1}\|=\frac{\|K_z\|}{\det K_z} ,
\]
and an application of Lemmas \ref{lm:detK} and \ref{lm:K_z}
completes the proof.
\end{pf}

We also need to estimate the so-called \textit{Lyapunov coefficient}
%
\begin{equation}\label{L3}
L_z:=\|V_z\|^{3} \sum_{x\in\calX}|x|^3\mu_3(x),
\end{equation}
where $\mu_3(x)=\EE_{z}^{r}|\nu_0(x)^3|$ [see (\ref{eq:m-mu})].
\begin{lemma}\label{lm:7.1}
As  $n\to\infty$, one has  $L_z\asymp|n|^{-1/3}$.
\end{lemma}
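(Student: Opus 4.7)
The proof is essentially a direct combination of two previously established asymptotic estimates, so my plan is very short.

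First, I would recall from Lemma \ref{NV} that $\|V_z\| \asymp |n|^{-2/3}$ as $n \to \infty$, and hence $\|V_z\|^3 \asymp |n|^{-2}$. Next, I would apply Lemma \ref{lm:muk} with $k = 3$ to obtain
\begin{equation*}
\sum_{x \in \calX} |x|^3 \mu_3(x) \asymp |n|^{(3+2)/3} = |n|^{5/3}, \qquad n \to \infty.
\end{equation*}

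Substituting these two estimates into the definition (\ref{L3}) of $L_z$ yields
\begin{equation*}
L_z = \|V_z\|^3 \sum_{x \in \calX} |x|^3 \mu_3(x) \asymp |n|^{-2} \cdot |n|^{5/3} = |n|^{-1/3},
\end{equation*}
which is the required estimate.

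Since the hard work has already been done in proving Lemmas \ref{NV} and \ref{lm:muk}, there is no real obstacle here: the asymptotic relation $\asymp$ is preserved under products, so once both factors have been controlled from above and below of the correct order, the conclusion for $L_z$ follows immediately. If anything, the only thing worth double-checking is that the bounds implicit in the $\asymp$ notation are uniform under Assumption \ref{as:c} (i.e., $n_1 \asymp n_2$), but this has already been built into the statements of the two input lemmas.
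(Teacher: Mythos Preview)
Your proof is correct and follows exactly the same approach as the paper: the paper's own proof simply states that the result follows from the definition (\ref{L3}) using Lemmas \ref{NV} and \ref{lm:muk} (with $k=3$), which is precisely what you have written out in slightly more detail.
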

\begin{pf}
The proof follows from (\ref{L3}) using Lemmas \ref{NV} and \ref
{lm:muk} (with
$k=3$).
\end{pf}

\subsection{Estimates of the characteristic functions}\label{sec7.4}

Recall from Section \ref{sec2.1} that, with respect to the measure
$\QQ_{z}^{r}$, the random variables $\{\nu(x)\}_{x\in\calX}$ are
independent and have negative binomial distribution with parameters
$r$ and $p=1-z^x$. In particular, $\nu(x)$ has the characteristic
function [see (\ref{p.f})]
%
\begin{equation}\label{f_nu}
\varphi_{\nu(x)}(s):=\EE_{z}^{r}
\bigl(\e^{is\nu(x)}\bigr)=\frac{(1-z^{x})^r}{(1-z^{x}
\e^{is})^r} , \qquad s\in\RR,\vadjust{\goodbreak}
\end{equation}
and hence the characteristic function
$\varphi_{\xi}(\lambda):=\EE_{z}^{r}
 (\e^{i\langle\lambda, \xi\rangle})$
of the vector
$\xi=\sum_{x\in\calX} x\nu(x)$ is given by
\[
\varphi_{\xi}(\lambda) =\prod_{x\in\calX} \varphi_{\nu
(x)}(\langle
\lambda,x\rangle)=\prod_{x\in\calX} \frac{(1-z^{x})^r}
{(1-z^{x} \e^{i\langle\lambda,x\rangle})^r}
,\qquad
\lambda\in\RR^2.
\]

\begin{lemma}\label{lm:7.2_f}
Let  $\varphi_{\nu_0(x)}(s)$ be the characteristic function of the
random variable $\nu_0(x)=\nu(x)-\EE_{z}^{r}[\nu(x)]$. Then
%
\begin{equation}\label{x.f_4}
|\varphi_{\nu_0(x)}(s)|\le
\exp\bigl\{-{ \tfrac12}\mu_2(x)s^2+
{ \tfrac13}\mu_3(x)|s|^3\bigr\},\qquad  s\in
\RR,
\end{equation}
where $\mu_k(x)=\EE_{z}^{r}|\nu_0(x)^k|$ [see
(\ref{eq:m-mu})].
\end{lemma}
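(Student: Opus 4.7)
The plan is to derive the bound from a second-order Taylor expansion of $e^{iu}$, followed by a short case analysis on the size of $s^2\mu_2(x)$. The coefficient $\tfrac13$ on the cubic term (rather than the more natural $\tfrac16$ that drops out of Taylor's remainder) appears to be calibrated precisely so as to leave enough slack in a delicate intermediate regime. Concretely, I would start from the elementary inequality $|e^{iu}-1-iu+\tfrac12 u^2|\le\tfrac16|u|^3$ valid for all $u\in\RR$, set $u=s\nu_0(x)$, and take $\QQ_z^r$-expectation. Since $\EE_z^r[\nu_0(x)]=0$ by construction and $\EE_z^r[\nu_0(x)^2]=\mu_2(x)$, $\EE_z^r|\nu_0(x)|^3=\mu_3(x)$, this produces the pointwise Taylor bound
\begin{equation*}
\bigl|\varphi_{\nu_0(x)}(s)\bigr|\le\bigl|1-\tfrac12 s^2\mu_2(x)\bigr|+\tfrac16|s|^3\mu_3(x). \qquad(\ast)
\end{equation*}

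When $s^2\mu_2(x)\le 2$, the expression $1-\tfrac12 s^2\mu_2(x)$ is nonnegative, and applying the inequality $1+y\le e^y$ to $(\ast)$ immediately yields the even sharper estimate $|\varphi_{\nu_0(x)}(s)|\le\exp(-\tfrac12 s^2\mu_2(x)+\tfrac16|s|^3\mu_3(x))$, from which the claim follows a fortiori. When $s^2\mu_2(x)>2$ but $|s|\mu_3(x)\ge\tfrac32\mu_2(x)$, the exponent $-\tfrac12 s^2\mu_2(x)+\tfrac13|s|^3\mu_3(x)$ is itself nonnegative, so the claimed inequality reduces to the trivial $|\varphi_{\nu_0(x)}(s)|\le 1$.

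The main obstacle is the narrow intermediate regime where both $s^2\mu_2(x)>2$ and $|s|\mu_3(x)<\tfrac32\mu_2(x)$ hold simultaneously. Here I would invoke Lyapunov's inequality $\mu_3(x)\ge\mu_2(x)^{3/2}$ to force $|s|\mu_2(x)^{1/2}<\tfrac32$, whence $s^2\mu_2(x)<\tfrac94$. Combining $(\ast)$ (in which $|1-\tfrac12 s^2\mu_2(x)|=\tfrac12 s^2\mu_2(x)-1$) with $e^x\ge 1+x$, the claim is reduced to the explicit inequality $6(s^2\mu_2(x)-2)\le|s|^3\mu_3(x)$. The left-hand side is at most $\tfrac32$ via $s^2\mu_2(x)<\tfrac94$, while the right-hand side strictly exceeds $2\sqrt{2}$ since $|s|>\sqrt{2/\mu_2(x)}$ and, again by Lyapunov, $|s|^3\mu_3(x)\ge|s|^3\mu_2(x)^{3/2}>2^{3/2}$. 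This closes the argument on all of $\RR$.
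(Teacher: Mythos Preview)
Your argument is correct, and the case analysis closes cleanly: in particular, the delicate intermediate regime $2<s^2\mu_2(x)<\tfrac94$ is handled correctly via Lyapunov's inequality $\mu_3(x)\ge\mu_2(x)^{3/2}$, which forces $|s|^3\mu_3(x)\ge(s^2\mu_2(x))^{3/2}>2\sqrt{2}>\tfrac32$.

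The paper's proof takes a different and somewhat slicker route via \emph{symmetrization}. It introduces an independent copy $\nu_1(x)$ of $\nu_0(x)$ and sets $\tilde\nu(x):=\nu_0(x)-\nu_1(x)$, so that $\varphi_{\tilde\nu(x)}(s)=|\varphi_{\nu_0(x)}(s)|^2$ is real. A third-order Taylor expansion for $\tilde\nu(x)$ (using $\Var[\tilde\nu(x)]=2\mu_2(x)$ and the standard bound $\EE_z^r|\tilde\nu(x)|^3\le 4\mu_3(x)$) gives $\varphi_{\tilde\nu(x)}(s)=1-\mu_2(x)s^2+\tfrac23\theta\mu_3(x)s^3$ with $|\theta|\le1$; then the elementary inequality $|y|\le\exp\{\tfrac12(y^2-1)\}$ applied to $y=|\varphi_{\nu_0(x)}(s)|$ yields the claim in one line, with no case analysis. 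What your approach buys is directness---no auxiliary random variable, no appeal to the symmetrization moment bound---at the price of the three-way split and the Lyapunov input. The paper's symmetrization avoids all casework because the inequality $|y|\le e^{(y^2-1)/2}$ is valid for every real $y$, so the sign of $1-\mu_2(x)s^2$ never has to be tracked.
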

\begin{pf}
Let a random variable $\nu_1(x)$ be independent of $\nu_0(x)$ and
have the same distribution, and set $\tilde{\nu}(x):=
\nu_0(x)-\nu_1(x)$. Note that $\EE_{z}^{r}[\tilde\nu(x)]=0$ and
$\Var[\tilde{\nu}(x)]= 2 \Var[\nu(x)]=2
\mu_2(x)$. We also
have the inequality
\[
\EE_{z}^{r}|\tilde{\nu}(x)^3| \le
4\EE_{z}^{r}|\nu_0(x)^3|=4\mu_3(x)
\]
(see \cite{BR}, Lemma 8.8, pages 66 and 67). Hence, by Taylor's formula,
the characteristic function of $\tilde\nu(x)$ can be represented in
the form
%
\begin{equation}\label{x.f_2}
\varphi_{\tilde{\nu}(x)}(s)=1-\mu_2(x)
s^2+{ \tfrac23} \theta\mu_3(x) s^3,
\end{equation}
where $|\theta|\le1$. Now, using\vspace*{1pt} the elementary inequality $|y|\le
\e^{(y^2-1)/2}$ and the fact that $\varphi_{\tilde{\nu}(x)}(s)=
|\varphi_{\nu_0(x)}(s)|^2$, we get
\[
\bigl|\varphi_{\nu_0(x)}(s)\bigr|
\le\exp\bigl\{{ \tfrac12}\bigl(\bigl|\varphi_{\nu
_0(x)}(s)\bigr|^2-1\bigr)\bigr\}
=\exp\bigl\{{ \tfrac12}\bigl(\varphi_{\tilde\nu
(x)}(s)-1\bigr)\bigr\},
\]
and the lemma follows by (\ref{x.f_2}).
\end{pf}

The characteristic function of the vector $\xi_0:=\xi-a_z
=\sum_{x\in\calX}x\nu_0(x)$ is given by
%
\begin{equation}\label{x.f_5}\quad
\varphi_{\xi_0}(\lambda):= \EE_{z}^{r}
\bigl(\e^{i\langle\lambda, \xi_0\rangle}\bigr)=
\prod_{x\in
\calX}\EE_{z}^{r} \bigl(\e^{i\langle\lambda,
x\rangle\nu_0(x)}\bigr)=
\prod_{x\in\calX}\varphi_{\nu_0(x)}(\langle\lambda,x\rangle).
\end{equation}

\begin{lemma}\label{lm:7.2_F}
If  $V_z=K_z^{-1/2}$, then for all  $\lambda\in\RR^2$
%
\begin{equation}\label{x.f_6}
|\varphi_{\xi_0}(\lambda V_z)|\le\exp\bigl\{-{
\tfrac12}|\lambda|^2+{ \tfrac{1}{3}}
L_z|\lambda|^3\bigr\}.
\end{equation}
\end{lemma}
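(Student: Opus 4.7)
The plan is to combine the product representation \eqref{x.f_5} of the characteristic function $\varphi_{\xi_0}$ with the single-coordinate bound established in Lemma~\ref{lm:7.2_f}, and then identify the resulting quadratic form as a Euclidean norm via the defining property of $V_z$.

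First I would substitute $\lambda V_z$ for the argument in the product formula \eqref{x.f_5}, obtaining
\begin{equation*}
|\varphi_{\xi_0}(\lambda V_z)| = \prod_{x\in\calX} \bigl|\varphi_{\nu_0(x)}(\langle \lambda V_z, x\rangle)\bigr|.
\end{equation*}
Applying \eqref{x.f_4} pointwise with $s=\langle \lambda V_z, x\rangle$ and converting the product into an exponential of a sum, I would arrive at
\begin{equation*}
|\varphi_{\xi_0}(\lambda V_z)| \le \exp\!\Bigl\{-{\textstyle\frac12} S_2 + {\textstyle\frac13} S_3\Bigr\},
\end{equation*}
where $S_2 := \sum_{x\in\calX} \mu_2(x)\,\langle \lambda V_z, x\rangle^2$ and $S_3 := \sum_{x\in\calX} \mu_3(x)\,|\langle \lambda V_z, x\rangle|^3$.

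Next, for the quadratic term, I would write $\langle \lambda V_z, x\rangle^2 = (\lambda V_z)\, x^{\topp} x\, (\lambda V_z)^{\topp}$ and interchange summation and vector multiplication. Since $\mu_2(x) = \Var[\nu(x)]$, the identity \eqref{D_zxi} gives $\sum_{x\in\calX} \mu_2(x)\, x^{\topp} x = K_z$, hence
\begin{equation*}
S_2 = (\lambda V_z)\, K_z\, (\lambda V_z)^{\topp} = \lambda\, V_z K_z V_z\, \lambda^{\topp}.
\end{equation*}
Because $V_z$ is symmetric with $V_z^2 = K_z^{-1}$, and $V_z$ commutes with $K_z$ (both being functions of $K_z$), one has $V_z K_z V_z = V_z^2 K_z = I$, so $S_2 = |\lambda|^2$.

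For the cubic term I would apply the Cauchy--Schwarz inequality in the form $|\langle \lambda V_z, x\rangle| \le |\lambda V_z|\cdot|x| \le \|V_z\|\cdot|\lambda|\cdot|x|$, giving
\begin{equation*}
S_3 \le \|V_z\|^3\,|\lambda|^3 \sum_{x\in\calX} |x|^3\, \mu_3(x) = L_z\,|\lambda|^3,
\end{equation*}
by the very definition \eqref{L3} of the Lyapunov coefficient. Putting the two bounds together yields the claimed estimate \eqref{x.f_6}.

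The proof is essentially a direct combination of earlier ingredients, so there is no substantial obstacle; the only place requiring any care is the matrix identity $V_z K_z V_z = I$, which however is immediate from the fact that $V_z = K_z^{-1/2}$ is the unique symmetric positive-definite square root of $K_z^{-1}$ and therefore commutes with $K_z$.
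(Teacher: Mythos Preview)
Your proof is correct and follows essentially the same route as the paper: apply the product formula \eqref{x.f_5} together with the single-site bound \eqref{x.f_4}, identify the quadratic sum as $\lambda V_z K_z V_z\lambda^{\topp}=|\lambda|^2$, and bound the cubic sum via Cauchy--Schwarz and the definition \eqref{L3} of $L_z$. The only cosmetic difference is that the paper phrases the quadratic computation as $\Var\langle \lambda V_z,\xi\rangle$ before reducing it to $|\lambda|^2$.
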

\begin{pf}
Using (\ref{x.f_5}) and (\ref{x.f_4}), we obtain
%
\begin{equation}\label{x.f_6*}
|\varphi_{\xi_0}(\lambda V_z)| \le\exp\biggl\{-\frac12\sum_{x\in
\calX} \langle\lambda V_z,x\rangle^2\mu_2(x)+\frac13\sum_{x\in
\calX}|\langle\lambda V_z,x\rangle|^3\mu_3(x)\biggr\}.\hspace*{-35pt}
\end{equation}
The first sum in (\ref{x.f_6*}) is evaluated exactly as
%
\begin{eqnarray}\label{DL0}
\sum_{x\in\calX}\langle\lambda V_z,x\rangle^2\mu_2(x)&=& \Var
\langle
\lambda V_z,\xi\rangle=\lambda V_z \Cov(\xi,\xi
) (\lambda
V_z)^{ \topp}\nonumber\\[-8pt]\\[-8pt]
&=&\lambda V_z K_zV_z\lambda^{ \topp} =
|\lambda|^2,
\nonumber
\end{eqnarray}
since $\Cov(\xi,\xi)=K_z=V_z^{-2}$. For the second sum in
(\ref{x.f_6*}), by the Cauchy--Schwarz inequality and on account of
(\ref{L3}) we have
%
\begin{equation}\label{DL}
\sum_{x\in\calX}|\langle\lambda V_z,x\rangle|^3\mu
_3(x) \le
|\lambda|^3 \|V_z\|^3 \sum_{x\in\calX}|x|^3
\mu_3(x)=|\lambda|^3L_z.
\end{equation}
Now, substituting (\ref{DL0}), (\ref{DL}) into (\ref{x.f_6*}), we
get (\ref{x.f_6}).
\end{pf}
\begin{lemma}\label{lm:7.2}
If  $|\lambda|\le L_z^{-1}$, then
%
\begin{equation}\label{16L}
\bigl|\varphi_{\xi_0}(\lambda V_z)-\e^{-|\lambda|^2
/2}\bigr|
\le16L_z|\lambda|^3 \e^{-|\lambda|^2/6}.
\end{equation}
\end{lemma}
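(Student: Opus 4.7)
The plan is to take logarithms of the product $\varphi_{\xi_0}(\lambda V_z)=\prod_x\varphi_{\nu_0(x)}(s_x)$, where $s_x:=\langle\lambda V_z,x\rangle$, compare with $-\tfrac12|\lambda|^2$, and then exponentiate. Throughout, the two identities $\sum_x\mu_2(x)s_x^2=|\lambda|^2$ from (\ref{DL0}) and $\sum_x\mu_3(x)|s_x|^3\le L_z|\lambda|^3$ from (\ref{DL}), which already surfaced in the proof of Lemma~\ref{lm:7.2_F}, play the central role.

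The building block is the standard third-order Taylor expansion for the characteristic function of a centered variable with finite third moment,
$$\varphi_{\nu_0(x)}(s_x)=1-\tfrac12\mu_2(x)s_x^2+\tfrac16\theta_x\mu_3(x)|s_x|^3,\qquad|\theta_x|\le 1,$$
together with the universal inequality $|\varphi_{\nu_0(x)}(s_x)-1|^2\le\mu_2(x)s_x^2$ (from $|\varphi-1|^2\le 2(1-\Re\varphi)$). Combined with $|\log(1+u)-u|\le|u|^2$ for $|u|\le 1/2$ and with Lyapunov's inequality $\mu_2(x)^{3/2}\le\mu_3(x)$, summation over $x\in\calX$ yields
$$\log\varphi_{\xi_0}(\lambda V_z)=-\tfrac12|\lambda|^2+R(\lambda),\qquad|R(\lambda)|\le c\,L_z|\lambda|^3,$$
for an absolute constant $c$. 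The complex mean-value bound $|e^A-e^B|\le|A-B|\max(|e^A|,|e^B|)$ applied to $A=\log\varphi_{\xi_0}(\lambda V_z)$ and $B=-|\lambda|^2/2$, together with Lemma~\ref{lm:7.2_F} and the inequality $L_z|\lambda|^3\le|\lambda|^2$ guaranteed by the hypothesis $|\lambda|\le L_z^{-1}$, give $\max(|e^A|,|e^B|)\le e^{-|\lambda|^2/6}$, and (\ref{16L}) follows in this regime.

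When $|\lambda|$ is too large for the termwise log expansion to be automatic, both $|\varphi_{\xi_0}(\lambda V_z)|$ and $e^{-|\lambda|^2/2}$ are dominated by $e^{-|\lambda|^2/6}$ (again by Lemma~\ref{lm:7.2_F} and the hypothesis), so a direct triangle inequality gives a difference of at most $2\myp e^{-|\lambda|^2/6}$; since $L_z|\lambda|^3$ is then bounded below by a positive absolute constant, this is absorbed into $16\myp L_z|\lambda|^3 e^{-|\lambda|^2/6}$. The main obstacle is choosing the cutoff between the two arguments so that together they cover all $|\lambda|\le L_z^{-1}$ with a uniform final constant of $16$. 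The secondary technical point is controlling the quadratic aggregate $\sum_x|v_x|^2$ (where $v_x:=\varphi_{\nu_0(x)}(s_x)-1$) by a quantity of order $L_z|\lambda|^3$ rather than the naive $|\lambda|^2$; this rests on combining $|v_x|\le\sqrt{\mu_2(x)}\mypp|s_x|$ with Lyapunov's inequality and on the uniform estimate $\|V_z\|\sqrt{\mu_2(x)}\mypp|x|=O(1)$, a byproduct of Theorem~\ref{th:K} and Lemma~\ref{NV}. Tracking the explicit constant $16$ through these estimates is bookkeeping rather than conceptual difficulty.
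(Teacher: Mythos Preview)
Your overall plan coincides with the paper's: split $\{|\lambda|\le L_z^{-1}\}$ into a ``small'' regime where the log expansion is valid and a ``large'' regime handled by the triangle inequality via Lemma~\ref{lm:7.2_F}; the paper's cutoff is $|\lambda|=\tfrac12 L_z^{-1/3}$, which makes $L_z|\lambda|^3\ge\tfrac18$ on the large side and $\mu_3(x)^{1/3}|s_x|\le\tfrac12$ on the small side.

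The gap is in your ``secondary technical point''. The inequality $|v_x|^2\le\mu_2(x)s_x^2$ alone (with Lyapunov and a pointwise bound on $\|V_z\|\sqrt{\mu_2(x)}\,|x|$) does \emph{not} yield $\sum_x|v_x|^2=O(L_z|\lambda|^3)$: it gives only $\sum_x|v_x|^2\le|\lambda|^2$, and there is no mechanism to convert the square into a cube of $|\lambda|$ times $L_z$. Moreover, the uniform estimate $\|V_z\|\sqrt{\mu_2(x)}\,|x|=O(1)$ is not a byproduct of Theorem~\ref{th:K} --- that theorem controls the \emph{sum} $\sum_x x_ix_j\mu_2(x)$, not the pointwise quantity $\mu_2(x)|x|^2$. (The pointwise bound is true, via a direct estimate on $\mu_2(x)=rz^x/(1-z^x)^2$, but it is neither proved in the paper nor derivable from the cited results, and in any case it does not rescue the argument.)

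What the paper does instead is square the third-order Taylor bound itself, $|v_x|\le\tfrac12\mu_2(x)s_x^2+\tfrac16\mu_3(x)|s_x|^3$, use $(a+b)^2\le2(a^2+b^2)$ together with Lyapunov $\mu_2\le\mu_3^{2/3}$, and then invoke the \emph{termwise} inequality $\mu_3(x)^{1/3}|s_x|\le\|V_z\|\,|x|\,\mu_3(x)^{1/3}|\lambda|\le L_z^{1/3}|\lambda|\le\tfrac12$ (each summand in the definition (\ref{L3}) of $L_z$ is at most $L_z$). This gives $|v_x|^2\le\tfrac13\,\|V_z\|^3|x|^3\mu_3(x)\,|\lambda|^3$, whence $\sum_x|v_x|^2\le\tfrac13 L_z|\lambda|^3$, and the remainder in $\log\varphi_{\xi_0}(\lambda V_z)$ is then bounded by $\tfrac12 L_z|\lambda|^3$ (so the final constant comes out as $1$ rather than $16$ in the small regime). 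Replace your route through $|v_x|\le\sqrt{\mu_2(x)}\,|s_x|$ by this direct squaring of the Taylor expansion.
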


\begin{pf}
Let us first suppose that $\frac12 L_z^{-1/3}\le|\lambda|\le
L_z^{-1}$. Then $\frac18\le L_z|\lambda|^3\le|\lambda|^2$, so
(\ref{x.f_6}) implies $|\varphi_{\xi_0}(\lambda V_z)|\le
\e^{-|\lambda|^2/6}$. Hence,
\begin{eqnarray*}
\bigl|\varphi_{\xi_0}(\lambda V_z)-\e^{-|\lambda|^2
/2}\bigr|
&\le&|\varphi_{\xi_0}(\lambda V_z)|+\e^{-|\lambda|^2
/2}\\
&\le&2 \e^{-|\lambda|^2/6}\le16
L_z|\lambda|^3 \e^{-|\lambda|^2/6}
\end{eqnarray*}
in accord with (\ref{16L}).

Suppose now that $|\lambda|\le\frac12 L_z^{-1/3}$. Taylor's formula
implies
%
\begin{equation}\label{eq:f-1}
\varphi_{\nu_0(x)}(s)-1=-{ \tfrac12}\mu_2(x)
s^2+{ \tfrac16} \theta_x\mu_3(x) s^3,
\end{equation}
where $|\theta_x|\le1$. By Lyapunov's inequality, $\mu_2(x)\le
\mu_3(x)^{2/3}$, so
%
\begin{equation}\label{eq:1/2}
\bigl|\varphi_{\nu_0(x)}(s)-1\bigr|\le{ \tfrac{1}{2}}
|s|^2\mu_3(x)^{2/3}+ { \tfrac{1}{6}}|s|^3 \mu_3(x).
\end{equation}
For $s=\langle\lambda V_z,x\rangle$, we have from (\ref{DL})
%
\begin{equation}\label{eq:m3}
|\langle\lambda V_z,x\rangle| \mu_3(x)^{1/3} \le
L_z^{1/3}|\lambda|\le{ \tfrac{1}{2}},
\end{equation}
and so (\ref{eq:1/2}) yields
%
\begin{equation}\label{eq:|f-1|}
\bigl|\varphi_{\nu_0(x)}(\langle\lambda V_z,x\rangle)-1\bigr| \le
{ \tfrac12\cdot\tfrac14+\tfrac{1}{6}\cdot\tfrac18<\tfrac
{1}{2}}.
\end{equation}
Similarly, using the elementary inequality $(a+b)^2\le
2(a^2+b^2)$, from (\ref{eq:1/2}) we obtain
\[
\bigl|\varphi_{\nu_0(x)}(s)-1\bigr|^2\le
{ \tfrac{1}{2}} \bigl(|s|\mu
_3(x)^{1/3}+
{ \tfrac{1}{9}}|s|^3\mu_3(x)\bigr)
|s|^3\mu_3(x),
\]
whence, in view of (\ref{eq:m3}) and a general bound $|\langle
\lambda V_z,x\rangle|\le|\lambda| \cdot \|V_z\| \cdot |x|$,
it follows that
%
\begin{eqnarray}\label{|f-1|^2}
\bigl|\varphi_{\nu_0(x)}(\langle\lambda V_z,x\rangle)-1
\bigr|^2&\le&
{ \tfrac12}\bigl({ \tfrac12+\tfrac{1}{9}\cdot
\tfrac18}\bigr)
|\lambda|^3\|V_z\|^3|x|^3\mu_3(x)\nonumber\\[-8pt]\\[-8pt]
&\le&{ \tfrac13}|\lambda|^3\|
V_z\|^3|x|^3\mu_3(x).
\nonumber
\end{eqnarray}

Consider the function $\ln (1+y)$ of complex variable $y$,
choosing the principal branch of the logarithm (i.e., such that $\ln
1=0$). Taylor's expansion implies $\ln (1+y)=y+\theta
y^2$ for
$|y|\le{ \frac12}$, where $|\theta|\le1$. By
(\ref{eq:f-1}), (\ref{eq:|f-1|}) and~(\ref{|f-1|^2}) this yields
\[
\ln\varphi_{\nu_0(x)}(\langle\lambda V_z,x\rangle)=
-{ \tfrac12}\langle\lambda V_z,x\rangle^2\mu_2(x)+
{ \tfrac12}\tilde\theta_x|\lambda|^3
\|V_z\|^3|x|^3\mu_3(x),
\]
where $|\tilde\theta_x|\le1$. Substituting this into (\ref{x.f_5}),
due to (\ref{DL0}) and (\ref{DL}) we obtain
\[
\ln\varphi_{\xi_0}(\lambda V_z) = \sum_{x\in\calX} \ln
\varphi_{\nu_0}(\langle\lambda
V_z,x\rangle)=-{ \frac12}|\lambda|^2
+{ \frac12} \theta_1 L_z|\lambda|^3\qquad
(|\theta_1|\le1).
\]
Using the elementary inequality $|\e^{y}-1|\le|y|\e^{|y|}$,
which holds for any $y\in\CC$, we have
\begin{eqnarray*}
\bigl|\varphi_{\xi_0}(\lambda V_z)-\e^{-|\lambda|^2
/2}\bigr|
&=&\e^{-|\lambda|^2 /2}
\bigl|\e^{\theta_1L_z|\lambda|^3 /2}-1\bigr|\\
&\le&\e^{-|\lambda|^2 /2}\cdot{ \tfrac
12}
L_z|\lambda|^3 \e^{L_z|\lambda|^3 /2} \le\e
^{-|\lambda|^2 /2}
L_z|\lambda|^3,
\end{eqnarray*}
and the proof is complete.
\end{pf}
\begin{lemma}\label{lm:7.3}
For all  $\lambda\in\RR^2$,
%
\begin{equation}\label{f_J}
|\varphi_{\xi_0}(\lambda)|\le\exp\bigl\{-{ \tfrac
14} r
J_{\alpha}(\lambda)\bigr\},
\end{equation}
where
%
\begin{equation}\label{J_0}
J_{\alpha}(\lambda):=\sum_{x\in\calX}
\e^{-\langle\alpha, x\rangle}(1-\cos\langle\lambda,x\rangle).
\end{equation}
\end{lemma}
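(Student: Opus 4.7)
The plan is to reduce the bound to a pointwise estimate on each factor in the product
\begin{equation*}
|\varphi_{\xi_0}(\lambda)| = |\varphi_\xi(\lambda)| = \prod_{x\in\calX} |\varphi_{\nu(x)}(\langle\lambda,x\rangle)|,
\end{equation*}
where the first equality holds because the shift by the constant vector $a_z$ only introduces a phase. From formula (\ref{f_nu}), a direct computation using the identity $|1-z^x\e^{is}|^2 = (1-z^x)^2 + 2\myp z^x(1-\cos s)$ yields
\begin{equation*}
|\varphi_{\nu(x)}(s)|^2 = \Bigl(1 + \frac{2\myp z^x(1-\cos s)}{(1-z^x)^2}\Bigr)^{-r}.
\end{equation*}

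Next I would apply the elementary inequality $\ln(1+y) \ge y/(1+y)$ for $y\ge 0$, which after exponentiation gives $(1+y)^{-r} \le \exp\bigl(-r\myp y/(1+y)\bigr)$. Taking $y = 2\myp z^x(1-\cos s)/(1-z^x)^2$, the ratio $y/(1+y)$ simplifies to
\begin{equation*}
\frac{2\myp z^x(1-\cos s)}{(1-z^x)^2 + 2\myp z^x(1-\cos s)}.
\end{equation*}
The denominator is bounded above by $(1-z^x)^2 + 4z^x = (1+z^x)^2 \le 4$, so the ratio is bounded below by $z^x(1-\cos s)/2$. Combining, $|\varphi_{\nu(x)}(s)|^2 \le \exp\bigl(-\tfrac{r}{2}\myp z^x(1-\cos s)\bigr)$, whence $|\varphi_{\nu(x)}(s)| \le \exp\bigl(-\tfrac{r}{4}\myp z^x(1-\cos s)\bigr)$.

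Finally, substituting $s = \langle\lambda,x\rangle$ and taking the product over $x\in\calX$, and recalling that $z^x = \e^{-\langle\alpha,x\rangle}$ by (\ref{alpha}), produces exactly (\ref{f_J}) with constant $r/4$. There is no genuine obstacle in this argument: the bound is established term by term and does not require any summation machinery. The only mildly delicate point is picking the right elementary inequality so that the constant $1/4$ (rather than a smaller one) emerges; this is what the clean upper bound $(1+z^x)^2 \le 4$ on the worst-case denominator delivers.
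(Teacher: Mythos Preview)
Your argument is correct and is essentially the paper's own proof in slightly different packaging: the inequality $\ln(1+y)\ge y/(1+y)$ is just $\ln u\le u-1$ applied to $u=1/(1+y)=(1-z^x)^2/|1-z^x\e^{is}|^2$, which is exactly what the paper uses, and your denominator bound $(1-z^x)^2+2z^x(1-\cos s)\le(1+z^x)^2\le4$ coincides with the paper's bound $|1-z^x\e^{is}|^2\le4$. The two proofs are the same step for step.
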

\begin{pf}
According to (\ref{x.f_5}), we have
%
\begin{equation}\label{eq:J_1}
|\varphi_{\xi_0}(\lambda)|=|\varphi_{\xi}(\lambda)|
=\exp\biggl\{ \sum_{x\in\calX}
\ln\bigl|\varphi_{\nu(x)}(\langle\lambda, x\rangle)
\bigr|\biggr\}.
\end{equation}
Using (\ref{f_nu}), for any $s\in\RR$ we can write
\begin{eqnarray*}
\ln\bigl|\varphi_{\nu(x)}(s)\bigr|&=&\frac{r}{2}\ln\frac
{|1-z^x|^2}{|1-z^x\e^{is}|^{2}}
\le\frac{r}{2}\biggl(\frac{|1-z^x|^2}{|1-z^x\e^{is}|^{2}}-1
\biggr)\\
&=& -\frac{rz^x(1-\cos s)}{|1-z^x\e^{is}|^2}\le-\frac{rz^x(1-\cos
s)}{4} .
\end{eqnarray*}
Utilizing this estimate under the sum in (\ref{eq:J_1}) (with
$s=\langle\lambda, x\rangle$) and recalling the notation
(\ref{alpha}), we arrive at (\ref{f_J}).
\end{pf}

\subsection{\texorpdfstring{Proof of Theorem \protect\ref{th:LCLT} and Corollary \protect\ref{cor:Q}}
{Proof of Theorem 7.1 and Corollary 7.2}}
\label{sec7.5}

Let us first deduce the corollary from the theorem.
\begin{pf*}{Proof of Corollary \ref{cor:Q}}
According to
Theorem \ref{th:5.1},
$a_z:=\EE_{z}^{r}(\xi)=n+o(|n|^{2/3})$. Together with
Lemma \ref{NV} this implies
\begin{eqnarray*}
|(n-a_z)V_z|&\le&|n-a_z| \cdot
\|V_z\|\\
&=&o(|n|^{2/3}) O(|n|^{-2/3})=o(1).
\end{eqnarray*}
Hence, by Lemma \ref{lm:detK} we get
\begin{eqnarray*}
f_{a_z ,K_z}(n)&=&\frac{1}{2\pi} (\det K_z)^{-1/2}
 \e^{-|(n-a_z )V_z|^2 /2}\\
&=& \frac{r^{1/3}\kappa}{2\sqrt{3} \pi} (n_1n_2)^{-2/3}\bigl(1+o(1)\bigr),
\end{eqnarray*}
and (\ref{sim}) follows from (\ref{eq:LCLT}).
\end{pf*}
\begin{pf*}{Proof of Theorem \ref{th:LCLT}}
By definition, the characteristic function of the random vector
$\xi_0=\xi-a_z$ is given by the Fourier series
\[
\varphi_{\xi_0}(\lambda):= \EE_{z}^{r}
\bigl(\e^{i\langle\lambda, \xi_0\rangle}\bigr)
=\sum_{m\in\ZZ_+^2}\QQ_{z}^{r}\{\xi=m\} \e^{i\langle
\lambda,  m-a_z \rangle},\qquad \lambda
\in\RR^2,
\]
and hence the Fourier coefficients are expressed as
%
\begin{equation}\label{l_1}
\QQ_{z}^{r}\{\xi=m\}=\frac{1}{4\pi^2}\int_{T^2} \e^{-i\langle
\lambda,  m-a_z \rangle}
\varphi_{\xi_0}(\lambda) \,\dif \lambda,\qquad  m\in\ZZ_+^2,
\end{equation}
where
$T^2:=\{\lambda=(\lambda_1,\lambda_2)\in\RR^2\dvtx|\lambda_1|\le\pi
,
|\lambda_2|\le\pi\}$. On the other hand, the characteristic function
corresponding to the normal probability density $f_{a_z ,K_z}(x)$
[see (\ref{eq:phi1})] is given by
\[
\varphi_{a_z ,K_z}(\lambda)=
\e^{i\langle\lambda, a_z \rangle
-|\lambda
V_z^{-1} |^2 /2},\qquad
\lambda\in\RR^2,
\]
so by the Fourier inversion formula
%
\begin{equation}\label{f_o}
f_{a_z ,K_z}(m)= \frac{1}{4\pi^2}
\int_{\RR^2}\e^{-i\langle\lambda,  m-a_z
\rangle-|\lambda
V_z^{-1} |^2 /2} \,\dif
 \lambda,\qquad
m\in\ZZ^2_+.
\end{equation}

Note that if $|\lambda V_z^{-1} |\le L_z^{-1}$ then,
according to
Lemmas \ref{NV} and \ref{lm:7.1},
\[
|\lambda|\le|\lambda V_z^{-1} |\cdot\|V_z\| \le L_z^{-1}
\|V_z\|=O(|n|^{-1/3})=o(1),
\]
which implies that $\lambda\in T^2$. Using this observation and
subtracting (\ref{f_o}) from (\ref{l_1}), we get, uniformly in
$m\in\ZZ^2_+$,
%
\begin{equation}\label{I}
|\QQ_{z}^{r}\{\xi=m\}-f_{a_z ,K_z}(m)|\le
I_1+I_2+I_3,
\end{equation}
where
\begin{eqnarray*}
I_1&:=&\frac{1}{4\pi^2}\int_{\{\lambda \dvtx |\lambda V_z^{-1}
|\le L_z^{-1}\}}
\bigl|\varphi_{\xi_0}(\lambda)-\e^{-|\lambda V_z^{-1}
|^2 /2}
\bigr| \,\dif \lambda,\\
I_2&:=&\frac{1}{4\pi^2}\int_{\{\lambda \dvtx |\lambda V_z^{-1}
|>L_z^{-1}\}}
\e^{-|\lambda V_z^{-1} |^2
/2} \,\dif \lambda,\\
I_3&:=&\frac{1}{4\pi^2}
\int_{T^2\cap\{\lambda \dvtx |\lambda V_z^{-1}
|>L_z^{-1}\}}
|\varphi_{\xi_0}(\lambda)| \,\dif \lambda.
\end{eqnarray*}

By the substitution $\lambda=yV_z$, the integral $I_1$ is
reduced to
%
\begin{eqnarray}\label{I1}
I_1&=&\frac{| {\det V_z}|}{4\pi^2} \int_{|y|\le L_z^{-1}}
\bigl|\varphi_{\xi_0}(y V_z)-
\e^{-|y|^2 /2}\bigr| \,\dif y\nonumber\\
&=&O(1) (\det K_z)^{-1/2}L_z\int_{\RR^2}
|y|^3\e^{-|y|^2 /6} \,\dif y\\
&=& O(|n|^{-5/3}),\nonumber
\end{eqnarray}
on account of Lemmas \ref{lm:detK}, \ref{lm:7.1} and \ref{lm:7.2}.
Similarly, again putting $\lambda=yV_z$ and passing to the
polar coordinates, we get, due to Lemmas \ref{lm:detK} and
\ref{lm:7.1},
%
\begin{eqnarray}\label{I2}
I_2&=& \frac{| {\det V_z}|}{2\pi} \int_{L_z^{-1}}^\infty
|y|
\e^{-|y|^2 /2} \,\dif |y|\nonumber\\
&=&O(|n|^{-4/3})\e^{-L_z^{-2} /2}\\
&=&o(|n|^{-5/3}).
\nonumber
\end{eqnarray}

Estimation of $I_3$ is the main part of the proof. Using Lemma
\ref{lm:7.3}, we obtain
%
\begin{equation}\label{I3}
I_3= O(1)\int_{T^2\cap\{|\lambda V_z^{-1} |>L_z^{-1}\}}
\e^{-J_{\alpha}(\lambda)} \,\dif \lambda,
\end{equation}
where $J_{\alpha}(\lambda)$ is given by (\ref{J_0}). The condition
$|\lambda V_z^{-1} |>L_z^{-1}$ implies that
$|\lambda|>\eta|\alpha|$ for a suitable (small enough) constant
$\eta>0$ and hence
\[
\max\{|\lambda_1|/\alpha_1,|\lambda_2|/\alpha_2\}>\eta
\]
for otherwise from (\ref{alpha}) and
Lemmas \ref{lm:K_z} and \ref{lm:7.1}
it would follow that
\[
1<L_z|\lambda V_z^{-1} |\le L_z\eta
|\alpha| \cdot
\|K_z\|^{1/2}= O(\eta)\to0 \qquad\mbox{as } \eta\downarrow0.
\]
Hence, the estimate (\ref{I3}) is reduced to
%
\begin{equation}\label{I3.1}
I_3=O(1) \biggl(
\int_{|\lambda_1|>\eta\alpha_1}+\int_{|\lambda
_2|>\eta\alpha_2}\biggr)
\e^{-J_{\alpha}(\lambda)} \,\dif \lambda.
\end{equation}

To estimate the first integral in (\ref{I3.1}), by keeping in the
sum (\ref{J_0}) only pairs of the form $x=(x_1,1)$,
 $x_1\in\ZZ_+$, we obtain
%
\begin{eqnarray}\label{J_1}
\e^{\alpha_2}J_{\alpha}(\lambda) &\ge& \sum_{x_1=0}^\infty
\e^{-\alpha_1x_1} \bigl(1-\Re
\e^{i(\lambda_1 x_1+\lambda_2)}\bigr)\nonumber\\
&=&\frac{1}{1-\e^{-\alpha_1}}-
\Re\biggl(\frac{\e^{i\lambda_2}}{1-\e^{-\alpha_1+i\lambda
_1}}\biggr)\\
&\ge&
\frac{1}{1-\e^{-\alpha_1}}-\frac{1}{|1-\e^{-\alpha_1+i\lambda
_1}|} ,\nonumber
\end{eqnarray}
because $\Re u\le|u|$ for any $u\in\CC$. Since
$\eta \alpha_1\le|\lambda_1|\le\pi$, we have
\[
|1-\e^{-\alpha_1+i\lambda_1}|
\ge|1-\e^{-\alpha_1+i\eta\alpha
_1}|\sim\alpha_1
(1+\eta^2)^{1/2}  \qquad (\alpha_1\to0).
\]
Substituting this estimate into (\ref{J_1}), we conclude that
$J_\alpha(\lambda)$ is asymptotically bounded from below by
$C(\eta) \alpha_1^{-1} \asymp|n|^{1/3}$,
uniformly in
$\eta \alpha_1\le|\lambda_1|\le\pi$. Thus, the first integral
in (\ref{I3.1}) is bounded by
\[
O(1)\exp(-\const\cdot|n|^{1/3})=o(|n|^{-5/3}).
\]

Similarly, the second integral in (\ref{I3.1}) (where
$|\lambda_2|>\eta \alpha_2$) is estimated by reducing summation
in (\ref{J_0}) to that over $x=(1,x_2)$ only. As a result, we obtain
that $I_3=o(|n|^{-5/3})$. Substituting this estimate
together with (\ref{I1}) and (\ref{I2}) into (\ref{I}), we get
(\ref{eq:LCLT}), and so the theorem is proved.
\end{pf*}

\section{Proof of the limit shape results}\label{sec8}

Recall the notation [see (\ref{eq:X_n}), (\ref{xi(t)})]
$\xi(t)=\sum_{x\in\calX(t)}x\nu(x)$, where $\calX
(t)=\{x\in
\calX\dvtx x_2/x_1\le t(n_2/n_1)\}$, $t\in[0,\infty]$. As
stated at
the beginning of Section \ref{sec4}, the tangential parameterization
of the scaled polygonal line $\tilde\varGamma_n=S_n(\varGamma)$ is
given by
%
\begin{equation}\label{eq:tilde-xi}
\tilde\xi_n(t):=S_n(\xi(t))=(n_1^{-1}\xi_1(t),
n_2^{-1}\xi_2(t)), \qquad  t\in[0,\infty],
\end{equation}
whereas the limit shape $\gamma^*$ determined by equation
(\ref{eq:gamma0}) is parameterized by the vector-function
$g^*(t)=(g^*_1(t),g^*_2(t))$ defined in (\ref{u_12}) (see more
details in the \hyperref[app]{Appendix}, Section \ref{A-1}).

The goal of this section is to use the preparatory results obtained
so far and prove the uniform convergence of random paths
$\tilde\xi_n(\cdot)$ to the limit $g^*(\cdot)$ in probability with
respect to both $\QQ_z^r$ (Section \ref{sec8.1}) and $P_n^r$
(Section \ref{sec8.2}). Let us point out that, in view of
(\ref{eq:tilde-xi}), Theorems \ref{th:8.2} and \ref{th:8.2a} below
can be easily reformulated (cf. Theorem \ref{th:main1} stated in
the \hyperref[sec1]{Introduction}) using the tangential distance $d_{\mathcal
T}(\tilde\varGamma_n,\gamma^*)= \sup_{0\le
t\le\infty}|\tilde\xi_n(t)-g^*(t)|$ [see
(\ref{eq:d-T}); cf. general definition (\ref{eq:dT}) in Section
\ref{A-1} below].

\subsection{Limit shape under  $\QQ_{z}^{r}$}\label{sec8.1}

Let us first establish the universality of the limit shape under the
measures $\QQ_z^r$, which, in conjunction with the next Theorem
\ref{th:8.2a}, illustrates the asymptotic ``equivalence'' of the
probability spaces $(\CP,\QQ_z^r)$ and\vadjust{\goodbreak} $(\CP_n,\PP_n^r)$.
\begin{theorem}\label{th:8.2}
For each  $\varepsilon>0$,
\[
\lim_{n\to\infty} \QQ_{z}^{r}\Bigl\{{\sup_{0\le
t\le\infty}}|n_j^{-1}\xi_j(t)-g^*_j(t)|
\le\varepsilon\Bigr\}=1\qquad (j=1,2).
\]
\end{theorem}
\begin{pf}
By Theorems \ref{th:3.2} and \ref{th:8.1.1a}, the expectation of the
random process $n_j^{-1}\xi_j(t)$ uniformly\vspace*{1pt} converges to $g^*_j(t)$
as $n\to\infty$. Therefore, we only need to check that for each
$\varepsilon>0$
\[
\lim_{n\to\infty}\QQ_{z}^{r}\Bigl\{{\sup_{0\le t\le\infty}
n_j^{-1}}|\xi_j(t)-\EE_{z}^{r}[\xi_j(t)]
|>\varepsilon\Bigr\}=0.
\]

Note that the random process
$\xi_{0j}(t):=\xi_j(t)-\EE_{z}^{r}[\xi_j(t)]$ has independent
increments and zero mean; hence it is a martingale with respect to
the natural filtration ${\mathcal F}_t:=\sigma\{\nu(x),  x\in
\calX(t)\}$,  $t\in[0,\infty]$. From the definition of~$\xi_j(t)$
[see (\ref{xi(t)})], it is also clear that $\xi_{0j}(t)$ is a
c\`{a}dl\`{a}g process; that is, its paths are everywhere
right-continuous and have left limits. Therefore, applying the
Kolmogorov--Doob submartingale inequality (see, e.g.,
\cite{Yeh}, Corollary 2.1, page 14) and using Theorem~\ref{th:K}, we
obtain
\[
\QQ_{z}^{r}\Bigl\{{\sup_{0\le t\le\infty}}|\xi_{0j}(t)|>
\varepsilon n_j\Bigr\} \le\frac{\Var(\xi_j)}{(\varepsilon n_j)^2}=
O(|n|^{-2/3})\to0,
\]
and the theorem is proved.
\end{pf}

\subsection{Limit shape under  $\PP_n^r$}\label{sec8.2}

We are finally ready to prove our main result about the universality
of the limit shape under the measures $\PP_n^r$ (cf.~Theo\-rem~\ref{th:main1}).
\begin{theorem}\label{th:8.2a}
For any  $\varepsilon>0$,
\[
\lim_{n\to\infty} \PP^{r}_n\Bigl\{{\sup_{0\le t\le\infty}}
|
n_j^{-1}\xi_j(t)-g^*_j(t)|
\le\varepsilon\Bigr\}=1\qquad (j=1,2).
\]
\end{theorem}
\begin{pf}
Similarly as in the proof of Theorem \ref{th:8.2}, the claim of the
theorem is reduced to the limit
\[
\lim_{n\to\infty} \PP^{r}_n\Bigl\{{\sup_{0\le t\le\infty}}
|\xi_{0j}(t)|>\varepsilon n_j\Bigr\}=0,
\]
where $\xi_{0j}(t)=\xi_j(t)-\EE_{z}^{r}[\xi_j(t)]$. Using (\ref{Pn})
we get
%
\begin{equation}\label{A1}
\PP^{r}_n\Bigl\{{\sup_{0\le t\le\infty}}|\xi
_{0j}(t)|>
\varepsilon n_j\Bigr\} \le\frac{\QQ_{z}^{r} \{{\sup_{0\le
t\le\infty}}|\xi_{0j}(t)|> \varepsilon
n_j\}}{\QQ_{z}^{r}\{\xi=n\}} .
\end{equation}
Applying the Kolmogorov--Doob submartingale inequality and using
Lem\-ma~\ref{lm:6.6} (with $k=3$), we obtain
\[
\QQ_{z}^{r}\Bigl\{{\sup_{0\le
t\le\infty}}|\xi_{0j}(t)|>\varepsilon n_j
\Bigr\}\le
\frac{\EE_{z}^{r}[\xi_j-\EE_z^r(\xi_j)
]^6}{(\varepsilon
n_j)^{6}} =O(|n|^{-2}).
\]
On the other hand, by Corollary \ref{cor:Q}
\[
\QQ_{z}^{r}\{\xi=n\}\asymp(n_1n_2)^{-2/3}\asymp|n|^{-4/3}.
\]
In view of these estimates, the right-hand side of (\ref{A1}) is
dominated by a~quantity of order of $O(|n|^{-2/3})\to0$,
and the theorem is proved.
\end{pf}

\begin{appendix}\label{app}

\section*{Appendix}

\subsection{Tangential distance between convex paths}\label{A-1}
Let ${\mathcal G}_0$ be the space of paths in $\RR^2_+$ starting
from the origin and such that each path $\gamma\in{\mathcal G}_0$ is
continuous, piecewise $C^1$-smooth (i.e., everywhere except a finite
set), bounded and convex and, furthermore, its tangent slope (where
it exists) is nonnegative, including the possible value $+\infty$.
Convexity implies that the slope is nondecreasing as a function of
the natural parameter (i.e., the length along the path measured from
the origin).

For $\gamma\in{\mathcal G}_0$, let $g_\gamma(t)
=(g_{1}(t),g_{2}(t))$ denote the right endpoint of the (closure of
the) part of $\gamma$ where the tangent slope does not exceed
$t\in[0,\infty]$. Note that the functions $x_{1}=g_{1}(t)$,
$x_2=g_{2}(t)$ are c\`{a}dl\`{a}g (i.e., right-continuous with left
limits), and
%
\setcounter{equation}{0}
\begin{equation}\label{eq:slope}
\frac{\dif x_2}{\dif x_1}=\frac{g^{\prime
}_2(t)}{g^{\prime}_1(t)}=t.
\end{equation}
More precisely, equation (\ref{eq:slope}) holds at points where the
tangent exists and its slope is strictly growing; corners on
$\gamma$ correspond to intervals where both functions $g_1$ and
$g_2$ are constant, whereas flat (straight line) pieces on $\gamma$
lead to simultaneous jumps of $g_1$ and $g_2$.

The canonical limit shape curve $\gamma^*$ [see (\ref{eq:gamma0})]
is determined by the parametric equations [cf. (\ref{eq:g*}),
(\ref{u_12})]
%
\begin{equation}\label{u_12_}\quad
x_1=g^*_{1}(t)=\frac{t^2+2t}{(1+t)^2} ,\qquad
x_2=g^*_{2}(t)=\frac{t^2}{(1+t)^2} ,\qquad   t\in
[0,\infty].
\end{equation}
Indeed, it can be readily seen that the functions (\ref{u_12_})
satisfy the Cartesian equation (\ref{eq:gamma0}) for $\gamma^*$;
moreover, it is easy to check that $t$ in equations (\ref{u_12_}) is
the tangential parameter
\[
\frac{\dif g^{*}_{1}(t)}{\dif t}= \frac{2}{(1+t)^3}
,\qquad
\frac{\dif g^{*}_{2}}{\dif t}= \frac{2t}{(1+t)^3} ,
\]
and hence [cf. (\ref{eq:slope})]
\[
\frac{\dif x_2}{\dif x_1}=\frac{\dif g^{*}_{2}/\dif t}{\dif
 g^{*}_{1}/\dif t}=t.
\]

The tangential distance $d_{\mathcal T}$ between paths in ${\mathcal
G}_0$ is defined as follows:
%
\begin{equation}\label{eq:dT}
d_{\mathcal T}(\gamma_1,\gamma_2):={\sup_{0\le t\le\infty}} |
g_{\gamma_1}(t)-g_{\gamma_2}(t)|,\qquad
\gamma_1,\gamma_2\in{\mathcal G}_0.
\end{equation}
\setcounter{theorem}{0}
\begin{lemma}\label{lm:A1}
The Hausdorff distance $d_{\mathcal H}$ defined in
(\ref{eq:dH}) is dominated by the tangential distance
$d_{\mathcal T}$
%
\begin{equation}\label{eq:dHdT}
d_{\mathcal H}(\gamma_1,\gamma_2)\le d_{\mathcal T}
(\gamma_1,\gamma_2),\qquad \gamma_1,\gamma_2\in{\mathcal G}_0.
\end{equation}
\end{lemma}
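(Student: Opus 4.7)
The plan is to reduce the Hausdorff bound to an explicit matching of points via the tangential parameterization. Write $D := d_{\mathcal T}(\gamma_1,\gamma_2)$. I will show that for every $x\in\gamma_1$ there exists $y\in\gamma_2$ with $|x-y|\le D$, and symmetrically that every $y\in\gamma_2$ has some $x\in\gamma_1$ within distance $D$; by the definition (\ref{eq:dH}) of $d_{\mathcal H}$, this immediately yields $d_{\mathcal H}(\gamma_1,\gamma_2)\le D$, which is (\ref{eq:dHdT}).

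To carry out the matching, I will use that each coordinate of $g_{\gamma_i}(t)$ is c\`adl\`ag, so the left limits $g_{\gamma_i}(t-)$ exist for every $t>0$, with the convention $g_{\gamma_i}(0-):=(0,0)$, the common starting point of all paths in $\mathcal{G}_0$. Letting $s\uparrow t$ in the tangential bound $|g_{\gamma_1}(s)-g_{\gamma_2}(s)|\le D$ shows that the same estimate passes to the left limits, $|g_{\gamma_1}(t-)-g_{\gamma_2}(t-)|\le D$. Next I will establish the representation
\begin{equation*}
x=\lambda\myp g_{\gamma_1}(t-)+(1-\lambda)\myp g_{\gamma_1}(t)\qquad\text{for some}\ \ t\in[0,\infty],\ \lambda\in[0,1],
\end{equation*}
valid for every $x\in\gamma_1$: as $t$ sweeps $[0,\infty]$, the map $g_{\gamma_1}$ traverses every smooth arc and every corner of $\gamma_1$, and the only skipped points are interiors of flat (straight-line) segments, which correspond to jumps of $g_{\gamma_1}$ at the unique slope $t$ of the segment and are convex combinations of its endpoints $g_{\gamma_1}(t-)$ and $g_{\gamma_1}(t)$. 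Setting $y:=\lambda\myp g_{\gamma_2}(t-)+(1-\lambda)\myp g_{\gamma_2}(t)$, the convexity of $\gamma_2$ forces $y\in\gamma_2$: either $g_{\gamma_2}(t-)=g_{\gamma_2}(t)$, in which case $y=g_{\gamma_2}(t)\in\gamma_2$, or $\gamma_2$ has a flat segment of slope $t$ joining these two points, on which $y$ lies. The triangle inequality then yields $|x-y|\le\lambda D+(1-\lambda)D=D$, and the symmetric direction follows by swapping the roles of $\gamma_1$ and $\gamma_2$.

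The main (and essentially only) obstacle is the bookkeeping of c\`adl\`ag conventions and the case analysis linking a jump of $g_{\gamma_i}$ at slope $t$ to the existence of a flat segment of slope $t$ in $\gamma_i$. This coupling is precisely what makes the convex-combination definition of $y$ automatically land on $\gamma_2$; it relies crucially on convexity of the paths and would fail in general. Once this correspondence is verified, the inequality itself reduces to a single application of the triangle inequality.
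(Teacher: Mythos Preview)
Your argument is correct and is a cleaner execution of the same matching idea than the paper's own proof. The paper first approximates both curves by polygonal lines (so that vertices and edges are discrete objects), then fixes $x\in\gamma_1$ and distinguishes cases: if $x$ is a vertex, the bound is immediate; if $x$ lies on an edge of slope $t^*$, the paper further splits according to whether $g_{\gamma_2}$ is continuous at $t^*$, in the discontinuous case invoking a trapezoid observation (the distance from a point on one base to the opposite base is bounded by the longer lateral side). Your use of left limits $g_{\gamma_i}(t-)$ bypasses all of this: the single representation $x=\lambda\,g_{\gamma_1}(t-)+(1-\lambda)\,g_{\gamma_1}(t)$ and the matched point $y=\lambda\,g_{\gamma_2}(t-)+(1-\lambda)\,g_{\gamma_2}(t)$ handle vertices, smooth points, and flat segments uniformly, with the triangle inequality replacing the trapezoid argument. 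The gain is that no polygonal approximation is needed and no case analysis survives; the cost is only the minor bookkeeping you already flag (the convention $g_{\gamma_i}(0-)=(0,0)$ and the identification of jumps of $g_{\gamma_2}$ with flat segments of $\gamma_2$), which is unproblematic here.
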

\begin{pf}
First of all, note that any path $\gamma\in{\mathcal G}_0$ can be
approximated, simultaneously in metrics $d_{\mathcal H}$ and
$d_{\mathcal T}$, by polygonal lines $\gamma^m$ (e.g., by
inscribing polygonal lines with refined edges in the arc $\gamma$)
so that
\[
\lim_{m\to\infty} d_{\mathcal H}(\gamma,\gamma^m)=0,\qquad
\lim_{m\to\infty} d_{\mathcal T}(\gamma,\gamma^m)=0.
\]
This reduces inequality (\ref{eq:dHdT}) to the case where
$\gamma_1$, $\gamma_2$ are polygonal lines. Moreover, by symmetry it
suffices to show that
%
\begin{equation}\label{eq:t*1a}
{\max_{x\in\gamma_1}\min_{y\in\gamma_2}}|x-y|\le d_{\mathcal
T}(\gamma_1,\gamma_2).
\end{equation}

Note that if a point $x\in\gamma_1$ can be represented as
$x=g_{\gamma_1}(t_0)$ with some $t_0$ (i.e., $x$ is a vertex of
$\gamma_1$), then
\[
{\min_{y\in
\gamma_2}}|x-y|={\min_{y\in\gamma_2}}|g_{\gamma_1}(t_0)-y|\le
|g_{\gamma_1}(t_0)-g_{\gamma_2}(t_0)|\le d_{\mathcal
T}(\gamma_1,\gamma_2),
\]
and inequality (\ref{eq:t*1a}) follows.

Suppose now that $x\in\gamma_1$ lies on an edge---say of slope
$t^*$---between two consecutive vertices $g_{\gamma_1}(t_*)$
and $g_{\gamma_1}(t^*)$, then $x=\theta
g_{\gamma_1}(t_*)+(1-\theta)g_{\gamma_1}(t^*)$ with some
$\theta\in(0,1)$ and
%
\begin{eqnarray}\label{eq:t**}
{\min_{y\in\gamma_2}}|x-y|&=& {\min_{y\in
\gamma_2}}|\theta g_{\gamma
_1}(t_*)+(1-\theta)g_{\gamma_1}(t^*)-y|\nonumber\\
&\le&|\theta g_{\gamma_1}(t_*)+(1-\theta
)g_{\gamma_1}(t^*)-g_{\gamma_2}(t^*)|\nonumber\\[-8pt]\\[-8pt]
&\le&\theta|g_{\gamma_1}(t_*)-g_{\gamma_2}(t^*)|+
(1-\theta)|g_{\gamma_1}(t^*)-g_{\gamma_2}(t^*)|\nonumber\\
&\le&
\theta|g_{\gamma_1}(t_*)-g_{\gamma_2}(t^*)|+(1-\theta
)
d_{\mathcal T}(\gamma_1,\gamma_2).\nonumber
\end{eqnarray}
Note that for all $t\in[t_*,t^*)$ we have
$g_{\gamma_1}(t)=g_{\gamma_1}(t_*)$, hence
%
\begin{equation}\label{eq:t*0}\qquad
|g_{\gamma_1}(t_*)-g_{\gamma_2}(t)|=|g_{\gamma_1}(t)-g_{\gamma
_2}(t)|\le
d_{\mathcal T}(\gamma_1,\gamma_2)\qquad (t_*\le t< t^*).
\end{equation}

If $g_{\gamma_2}(t)$ is continuous at $t=t^*$, then inequality
(\ref{eq:t*0}) extends to $t=t^*$
\[
|g_{\gamma_1}(t_*)-g_{\gamma_2}(t^*)|\le d_{\mathcal
T}(\gamma_1,\gamma_2).
\]
Substituting this inequality into the right-hand side of
(\ref{eq:t**}), we see that ${\min_{y\in\gamma_2}}|x-y|\le
d_{\mathcal T}(\gamma_1,\gamma_2)$, which implies (\ref{eq:t*1a}).

If $g_{\gamma_2}(t^*-0)\ne g_{\gamma_2}(t^*)$, then $t^*$ coincides
with the slope of some edge on~$\gamma_2$ [with the endpoints, say,
$g_{\gamma_2}(t'_*)$ and $g_{\gamma_2}(t^*)$], which is thus
parallel to the edge on $\gamma_1$ where the point $x$ lies [i.e.,
with the endpoints $g_{\gamma_1}(t_*)$, $g_{\gamma_1}(t^*)$].
Setting $s_*:=\max\{t'_*,t_*\}<t^*$, we have
$g_{\gamma_1}(t_*)=g_{\gamma_1}(s_*)$, $g_{\gamma_2}(t'_*)=
g_{\gamma_2}(s_*)$.

To complete the proof, it remains to observe that the shortest
distance from a point on a base of a trapezoid to the opposite base
does not exceed the maximum length of the two lateral sides. Hence,
\begin{eqnarray*}
{\min_{y\in\gamma_2}}|x-y| &\le&\min\{|x-y|\dvtx y\in
[g_{\gamma_2}(s_*),g_{\gamma_2}(t^*)]\}\\
&\le&\max\{|g_{\gamma_1}(s_*)-g_{\gamma_2}(s_*)|,
|g_{\gamma_1}(t^*)-g_{\gamma_2}(t^*)|\}\\
&\le& d_{\mathcal T}(\gamma_1,\gamma_2),
\end{eqnarray*}
and the bound (\ref{eq:t*1a}) follows.
\end{pf}
\begin{remark}
Note, however, that the metrics $d_{\mathcal H}$ and $d_{\mathcal
T}$ are \textit{not equivalent}. For instance, if
$\gamma\in{\mathcal G}_0$ is a smooth, strictly convex curve with
curvature bounded below by a constant $\varkappa_0>0$, then for an
inscribed polygonal line~$\varGamma_\varepsilon$ with edges of
length no more than $\varepsilon>0$, its tangential distance from
$\gamma$ is of the order of $\varepsilon$, but the Hausdorff
distance is of the order of $\varepsilon^2$
\[
d_{\mathcal T}(\varGamma_\varepsilon,\gamma)\asymp\varepsilon
,\qquad
d_{\mathcal H}
(\varGamma_\varepsilon,\gamma)\asymp\varkappa_0 \varepsilon^2
\qquad (\varepsilon\to0).
\]
Moreover, in the degenerate case where the curvature may vanish, the
difference between the two metrics may be even more dramatic. For
instance, it is possible that two polygonal lines are close to each
other in the Hausdorff distance while their tangential distance is
quite large. For an example, consider two straight line segments
$\varGamma_1, \varGamma_2\subset\RR_+^2$ of the same (large) length~$L$,
both starting from the origin and with very close slopes, so
that the Euclidean distance $\delta$ between their right endpoints
is small; then $d_{\mathcal H}(\varGamma_1,\varGamma_2)\le\delta$
whereas $d_{\mathcal T}(\varGamma_1,\varGamma_2)=L$.
\end{remark}

\subsection{Total variation distance between $\PP_n^r$ and $\PP_n^1$}\label{A-2}

Note that if probability measures $\tilde\PP_n$ and $\PP_n$ on the
polygonal space $\CP_n$ are asymptotically close to each other in
\textit{total variation} (\textit{TV}), that is, $\|\tilde
\PP_n-\PP_n\|_{\mathrm{TV}}\to0$ as $n\to\infty$, where
\[
\|\tilde\PP_n-\PP_n\|_{\mathrm{TV}}:={\sup_{A\subset\CP_n}}|\tilde
\PP_n(A)-\PP_n(A)|,
\]
then the problem of universality of the limit shape is resolved in a
trivial way, in that if a limit shape $\gamma^*$ exists under
$\PP_n$ then the\vspace*{1pt} same curve $\gamma^*$ provides the limit shape
under $\tilde\PP_n$. Indeed, assuming that the event
$A_\varepsilon=\{d(\tilde\varGamma_n,\gamma^*)>\varepsilon\}$
satisfies $\PP_n(A_\varepsilon)\to0$ as $n\to\infty$, we have
\begin{eqnarray*}
\tilde\PP_n(A_\varepsilon)&\le&\PP_n (A_\varepsilon) +
|\tilde\PP_n(A_\varepsilon)-\PP_n(A_\varepsilon)|\\
&\le&
\PP_n (A_\varepsilon) + \|\tilde\PP_n -\PP_n\|_{\mathrm{TV}}\to0\qquad
(n\to\infty).
\end{eqnarray*}

However, the family $\{\PP_n^r\}$, defined by formula (\ref{condP})
with the coefficients~(\ref{b_k}), is \textit{not} close to $\PP^1_n$
in total variation, at least uniformly in $r\in(0,\infty)$.
\begin{theorem}\label{th:Vasser}
For every fixed  $n$, the limiting distance in total
variation between  $\PP_n^r$ and  $\PP_n^1$, as  $r\to
0$ or  $r\to\infty$, is given by
%
\begin{equation}\label{eq:limVar}
{\lim_{r\to0,\infty}}
\|\PP_n^r-\PP_n^1\|_{\mathrm{TV}}=1-\frac{1}{\#(\CP_n)} .
\end{equation}
\end{theorem}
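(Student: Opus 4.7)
The plan is to show that $\PP_n^r$ concentrates on a single polygonal line in each of the two limits $r\to 0+$ and $r\to\infty$, and then apply the elementary identity
\begin{equation*}
\|\PP_n^r-\PP_n^1\|_{TV}=1-\!\sum_{\varGamma\in\CP_n}\!\min\{\PP_n^r(\varGamma),\mypp\PP_n^1(\varGamma)\}
\end{equation*}
together with $\PP_n^1(\varGamma)\equiv 1/\#(\CP_n)$: concentration on any single $\varGamma^*\in\CP_n$ makes the right-hand side tend to $1-1/\#(\CP_n)$, which is exactly \textup{(\ref{eq:limVar})}.

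The starting point is the elementary asymptotics of the weights \textup{(\ref{eq:b-k-r})}: for each fixed $k\ge 1$,
\begin{equation*}
b_k^r=\frac{r(r+1)\cdots(r+k-1)}{k!}\sim\frac{r}{k}\quad(r\to 0+),\qquad b_k^r\sim\frac{r^k}{k!}\quad(r\to\infty).
\end{equation*}
Writing $\varGamma\in\CP_n$ as a concatenation of $N(\varGamma)$ edges with lattice-point counts $k_1,\dots,k_{N(\varGamma)}\ge 1$ and setting $K(\varGamma):=\sum_i k_i$, formula \textup{(\ref{eq:b-Gamma})} then yields
\begin{equation*}
b^r(\varGamma)\sim r^{N(\varGamma)}\mynn\prod\nolimits_i k_i^{-1}\ \ (r\to0),\qquad b^r(\varGamma)\sim r^{K(\varGamma)}\mynn\prod\nolimits_i(k_i!)^{-1}\ \ (r\to\infty).
\end{equation*}
Hence in the ratio $\PP_n^r(\varGamma)=b^r(\varGamma)/B_n^r$, only those $\varGamma$ attaining $\min_{\CP_n}\!N(\cdot)$ (resp.\ $\max_{\CP_n}\!K(\cdot)$) survive in the corresponding limit; if the extremum is attained \emph{uniquely}, then $\PP_n^r$ concentrates on that single polygonal line.

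Evidently $N(\varGamma)\ge 1$, with equality only for the straight segment $\varGamma^{(1)}$ joining $0$ to $n$, so $\PP_n^r(\varGamma^{(1)})\to 1$ as $r\to 0+$. For the maximum of $K(\cdot)$, I would encode each edge of $\varGamma$ as $\nu_i x^{(i)}$ with $x^{(i)}=(p_i,q_i)\in\calX$, so that $k_i=\nu_i$ and $\sum_i\nu_i x^{(i)}=n$; summing coordinates gives
\begin{equation*}
n_1+n_2=\sum\nolimits_i\nu_i(p_i+q_i)\ge\sum\nolimits_i\nu_i=K(\varGamma),
\end{equation*}
with equality iff every primitive direction satisfies $p_i+q_i=1$, i.e.\ $x^{(i)}\in\{(1,0),(0,1)\}$. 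Since slopes strictly increase along $\varGamma$, at most one edge can have each of these two directions, and convexity forces the horizontal edge to precede the vertical one; in the nontrivial case $n_1,n_2\ge 1$ this pins down the L-shaped path $\varGamma^{(L)}\colon 0\to(n_1,0)\to n$ uniquely as the maximizer, so $\PP_n^r(\varGamma^{(L)})\to 1$ as $r\to\infty$. The degenerate cases $n_1=0$ or $n_2=0$ give $\#(\CP_n)=1$ and make \textup{(\ref{eq:limVar})} trivial. The main obstacle is the combinatorial uniqueness argument for $\varGamma^{(L)}$; all remaining steps reduce to the elementary asymptotics of $b_k^r$.
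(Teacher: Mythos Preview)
Your proof is correct and shares the paper's core idea: show that $\PP_n^r$ degenerates to a point mass as $r\to0$ (on the single-edge segment) and as $r\to\infty$ (on the L-shaped path), via the same edge-count statistics $N(\varGamma)$ and $K(\varGamma)=\sum_i k_i$ and the same uniqueness arguments. The one genuine difference is in how you extract the total-variation limit. The paper splits into a lower bound, obtained from the single event $\{\varGamma=\varGamma^*\}$, and a separate upper bound $\|\PP_n^r-\PP_n^1\|_{TV}\le 1-1/\#(\CP_n)$ valid for \emph{all} $r$, proved via the Vasershtein coupling characterization with the independent coupling. Your route through the identity $\|\PP_n^r-\PP_n^1\|_{TV}=1-\sum_\varGamma\min\{\PP_n^r(\varGamma),\PP_n^1(\varGamma)\}$ is more direct and yields both bounds at once; the paper's coupling argument, on the other hand, makes explicit that the upper bound holds uniformly in $r$, not just in the limit.
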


\begin{pf}
To obtain a lower bound for $\|\PP_n^r-\PP_n^1\|_{\mathrm{TV}}$ in the case
$r\to\infty$, consider the polygonal line $\varGamma^* \in\CP_n$
consisting of two edges, horizontal [from the origin to $(n_1,0)$]
and vertical [from $(n_1,0)$ to $n=(n_1,n_2)$]. The corresponding
configuration $\nu_{\varGamma^*}$ is determined by the conditions
$\nu_{\varGamma^*}(1,0)=n_1$,  $\nu_{\varGamma^*}(0,1)=n_2$ and
$\nu_{\varGamma^*}(x)=0$ otherwise. Note that $b_k^{r}\sim r^k/k!$
as $r\to\infty$ [see (\ref{b_k})], hence
$b^r (\varGamma)=O(r^{N_\varGamma})$, where
$N_\varGamma:=\sum_{x\in\calX}\nu_{\varGamma}(x)$ is the total
number of integer points on $\varGamma$ (excluding the origin). We
have $N_{\varGamma^*}=n_1+n_2$, so $b^r(\varGamma^*)=b_{n_1}^r
b_{n_2}^r\sim r^{n_1+n_2}/(n_1! n_2!)$ as $r\to\infty$.
On the
other hand, for any $\varGamma\in\CP_n$ ($\varGamma\ne\varGamma^*$)
one has $b^r(\varGamma)=o(r^{n_1+n_2})$. Indeed, for $x\in\calX$ we
have $x_1+x_2\ge1$ and, moreover, $x_1+x_2>1$ unless $x=(1,0)$ or
$x=(0,1)$. Hence, for any $\varGamma\in\CP_n$
($\varGamma\ne\varGamma^*$),
\[
N_\varGamma=\sum_{x\in\calX} \nu_{\varGamma}(x)<\sum_{x\in\calX}
(x_1+x_2) \nu_{\varGamma}(x)= n_1+n_2,
\]
so that $N_\varGamma<n_1+n_2$ and
$b^r(\varGamma)=O(r^{N_\varGamma})=o(r^{n_1+n_2})$ as $r\to\infty$.
Therefore, from (\ref{condP}) we get
\[
\PP_n^r(\varGamma^*)=\frac{b^r({\varGamma^*})}{b^r({\varGamma^*})
+\sum_{\varGamma\ne\varGamma^*}b^r({\varGamma})}=\frac{r^{n_1+n_2}}
{r^{n_1+n_2}+ o(r^{n_1+n_2})}\to1\qquad (r\to\infty),
\]
and it follows that
%
\begin{equation}\label{b*1}\quad
\|\PP_n^r-\PP_n^1\|_{\mathrm{TV}}
\ge|\PP_n^r(\varGamma^*)-\PP_n^1(\varGamma^*)|\to
1-\frac{1}{\#(\CP_n)}\qquad (r\to\infty).
\end{equation}

For the case $r\to0$, consider the polygonal line
$\varGamma_* \in\CP_n$ consisting of one edge leading
from the
origin to $n=(n_1,n_2)$. That is, $\nu_{\varGamma_*} (n/k_n)=k_n$
and $\nu(x)=0$ otherwise, where $k_n:=\gcd(n_1,n_2)$. Clearly,
$b^r({\varGamma_*})=b^r_{k_n}=r/k_n$, while for any other
$\varGamma\in\CP_n$ (i.e., with more than one edge), by (\ref{b_k})
we have $b^r({\varGamma})=O(r^2)$ as $r\to0$. Therefore, according
to (\ref{condP}),
\[
\PP_n^r(\varGamma_*)= \frac{b^r({\varGamma_*})}{b^r({\varGamma_*})
+\sum_{\varGamma\ne\varGamma_*}b^r({\varGamma})}= \frac{r}
{r+O(r^2)}\to1\qquad (r\to0),
\]
and similarly to (\ref{b*1}) we obtain
%
\begin{equation}\label{b*2}
\|\PP_n^r-\PP_n^1\|_{\mathrm{TV}}
\ge|\PP_n^r(\varGamma_*)-\PP_n^1(\varGamma_*)|\to
1-\frac{1}{\#(\CP_n)}\qquad (r\to0).\hspace*{-30pt}
\end{equation}

The upper bound for $\|\PP_n^r-\PP_n^1\|_{\mathrm{TV}}$ (uniform in $r$)
follows from the known fact (see \cite{D}, page 472,
and also \cite{ABT}, Section 3.1, pages 67 and 68)
that the total
variation distance can be expressed in terms of a certain
Vasershtein (--Kantorovich--Rubinstein, cf. \cite{VKantorovich})
distance
\[
\|\PP_n^r-\PP_n^1\|_{\mathrm{TV}}=\inf_{X,Y}\sfE [\varrho(X,Y)].
\]
Here the infimum is taken over all pairs of random elements $X$ and
$Y$ defined on a common probability space $(\Omega,{\mathcal
F},\sfP)$ with values in $\CP_n$ and the marginal distributions
$\PP_n^r$ and $\PP_n^1$, respectively; $\sfE$ denotes expectation
with respect to the probability measure $\sfP$, and the function
$\varrho(\cdot,\cdot)$ on $\CP_n\times\CP_n$ is such that
\mbox{$\varrho(\varGamma,\varGamma')=1$} if $\varGamma\ne\varGamma'$ and
$\varrho(\varGamma,\varGamma')=0$ if $\varGamma=\varGamma'$
(therefore defining a~discrete metric in $\CP_n$). Choosing $X$ and
$Y$ so that they are independent of each other, we obtain
\begin{eqnarray*}
\|\PP_n^r-\PP_n^1\|_{\mathrm{TV}}&\le&\sfE[\varrho(X,Y)]=
1-\sfP\{X=Y\}\\
&=&1-\sum_{\varGamma\in\CP_n}\sfP\{X=\varGamma,Y=\varGamma\}
=1-\sum_{\varGamma\in\CP_n}\PP_n^r(\varGamma)\cdot\PP
_n^1(\varGamma)\\
&=&1-\frac{1}{\#(\CP_n)}\sum_{\varGamma\in\CP_n}\PP_n^r(\varGamma)
=1-\frac{1}{\#(\CP_n)} .
\end{eqnarray*}
Combining this estimate with (\ref{b*1}) and (\ref{b*2}), we obtain
(\ref{eq:limVar}).
\end{pf}

In the limit $n\to\infty$, Theorem \ref{th:Vasser} yields
\[
{\lim_{n\to\infty}\lim_{r\to0,\infty}}
\|\PP_n^r-\PP_n^1\|_{\mathrm{TV}}=1.
\]
That is to say, in the successive limit $r\to0 (\infty)$,
$n\to\infty$, the measures $\PP_n^r$ and $\PP_n^1$ become singular
with respect to each other.

Moreover, one can show that the distance $\|\PP_n^r-\PP_n^1\|_{\mathrm{TV}}$
is not small even for a fixed $r\ne1$. To this end, it suffices to
find a function on $\CP_n$ possessing a limiting distribution
(possibly degenerate) under each $\PP_n^r$, with the limit depending
on the parameter $r$. Recalling Remark \ref{rm:r}, it is natural to
seek such a function in the form referring to integer points on
$\varGamma\in\CP_n$. Indeed, for the statistic
$N_\varGamma=\sum_{x\in\calX}\nu_{\varGamma}(x)$ introduced in the
proof of Theorem \ref{th:Vasser}, the following law of large numbers
holds (see Bogachev and Zarbaliev \cite{BZ1}, Theorem 3, and
Zarbaliev \cite{Z}, Section 1.10).
\begin{lemma}\label{lm:A3}
Under Assumption  \ref{as:c}, for each
$r\in(0,\infty)$ and any  $\varepsilon>0$,
%
\begin{equation}\label{eq:A}
\lim_{n\to\infty} \PP_n^r(A_\varepsilon^r)=1\qquad
\mbox{where }
A_\varepsilon^r:=\biggl\{\biggl|\frac{N_\varGamma}{(n_1 n_2)^{1/3}}-
\frac{r^{1/3}}{\kappa^2}\biggr|\le\varepsilon\biggr\}.
\end{equation}
\end{lemma}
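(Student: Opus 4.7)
The plan is to follow the randomization strategy used throughout the paper: first establish the law of large numbers for $N_\varGamma$ under the auxiliary product measure $\QQ_z^r$, and then transfer it to $\PP_n^r=\QQ_z^r(\cdot\mypp|\mypp\CP_n)$ via the local CLT (Corollary \ref{cor:Q}), with $z$ chosen according to Assumption \ref{as:z}.

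The first step is the asymptotics of the mean. By the independence of $\{\nu(x)\}_{x\in\calX}$ and the negative binomial mean formula (\ref{eq:nu}),
\begin{equation*}
\EE_z^r[N_\varGamma] = \sum_{x\in\calX}\frac{rz^x}{1-z^x}
= r\sum_{k=1}^\infty \sum_{x\in\calX}\e^{-k\langle\alpha,x\rangle}.
\end{equation*}
An application of M\"obius inversion (\ref{Mebius_f}) together with Lemma~\ref{lm:2.3} and Lebesgue's dominated convergence theorem, exactly as in the proof of Theorem \ref{th:delta12}, yields
\begin{equation*}
\EE_z^r[N_\varGamma] \sim \frac{r}{\alpha_1\alpha_2}\cdot\frac{1}{\zeta(2)}\sum_{k=1}^\infty\frac{1}{k^2}=\frac{r}{\alpha_1\alpha_2}\sim\frac{r^{1/3}}{\kappa^2}\,(n_1n_2)^{1/3},
\end{equation*}
where the last equivalence follows from (\ref{eq:alpha2:1}).

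The second step is to estimate high-order central moments of $N_\varGamma$ under $\QQ_z^r$. Writing $N_\varGamma-\EE_z^r[N_\varGamma]=\sum_{x\in\calX}\nu_0(x)$ and expanding via the multinomial theorem and independence, as in the proof of Lemma~\ref{lm:6.6}, the problem reduces to bounding $\sum_{x\in\calX}\mu_{k_i}(x)$ over all partitions $k_1+\cdots+k_\ell=2k$ with each $k_i\ge 2$. From Lemmas~\ref{lm:mu<m} and~\ref{lm:6.2} one has $\mu_k(x)\le 2^kC_k z^x(1-z^x)^{-k}$; combining the series $z^x(1-z^x)^{-k}=\sum_{j\ge 1}\binom{j+k-2}{k-1}\e^{-j\langle\alpha,x\rangle}$ with the bound $\sum_{x\in\calX}\e^{-j\langle\alpha,x\rangle}=O\bigl((j^2\alpha_1\alpha_2)^{-1}\bigr)$ (from M\"obius inversion and Lemma~\ref{lm:2.3}) and summing over $j$ truncated at the exponential cutoff $j\sim\alpha_1^{-1}$, one obtains
\begin{equation*}
\sum_{x\in\calX}\mu_k(x)=O\bigl(|n|^{k/3}\bigr)\ \ (k\ge 3),\qquad\sum_{x\in\calX}\mu_2(x)=O\bigl(|n|^{2/3}\log|n|\bigr),
\end{equation*}
whence $\EE_z^r\bigl[(N_\varGamma-\EE_z^r[N_\varGamma])^{2k}\bigr]=O\bigl(|n|^{2k/3}(\log|n|)^k\bigr)$ for every $k\ge 1$.

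Finally, apply Markov's inequality with the $6$th central moment ($k=3$):
\begin{equation*}
\QQ_z^r\bigl\{|N_\varGamma-\EE_z^r[N_\varGamma]|>\varepsilon\myp(n_1n_2)^{1/3}\bigr\}=O\bigl((\log|n|)^3\myp|n|^{-2}\bigr).
\end{equation*}
Since $\PP_n^r(A)\le\QQ_z^r(A)/\QQ_z^r\{\xi=n\}$ and $\QQ_z^r\{\xi=n\}\asymp|n|^{-4/3}$ by Corollary \ref{cor:Q}, this yields $\PP_n^r(A_\varepsilon^{r,c})=O\bigl((\log|n|)^3|n|^{-2/3}\bigr)\to 0$, which is (\ref{eq:A}). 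The main obstacle is the moment estimate: Chebyshev's inequality (the case $k=1$) falls far short of the $|n|^{4/3}$ blow-up from the local CLT denominator, so one genuinely needs at least the $6$th central moment, and the combinatorial bookkeeping over partitions of $2k$ into parts $\ge 2$, with careful tracking of the logarithmic factor arising from the borderline case $k_i=2$, is the technical heart of the argument.
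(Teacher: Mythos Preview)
The paper does not actually prove Lemma~\ref{lm:A3}; it is quoted from the external references \cite[Theorem~3]{BZ1} and \cite[\S\myp1.10]{Z}. Your proposal supplies a proof by transplanting verbatim the scheme of Theorem~\ref{th:8.2a}: compute the $\QQ_z^r$-mean, bound a high even central moment, and pass to $\PP_n^r$ via Corollary~\ref{cor:Q}. This is precisely the method one expects those references to use, and your argument is correct.

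Two remarks on the comparison with the analogous estimates in the paper. First, the mean computation is exactly right: the M\"obius/dominated-convergence argument gives $\EE_z^r[N_\varGamma]\sim r/(\alpha_1\alpha_2)$, and (\ref{alpha})--(\ref{delta12}) yield $\alpha_1\alpha_2=\kappa^2 r^{2/3}(n_1n_2)^{-1/3}$, whence the constant $r^{1/3}/\kappa^2$. Second, the logarithmic factor in $\sum_{x\in\calX}\mu_2(x)=O(|n|^{2/3}\log|n|)$ is a genuine feature that does \emph{not} appear in the paper's Lemma~\ref{lm:muk}: there the weights $|x|^k$ regularize the sum (Lemma~\ref{lm:sum(z)} gives $\sum_x|x|^k z^{\theta x}(1-z^x)^{-k}\asymp|n|^{(k+2)/3}$), whereas the unweighted sum $\sum_x z^x(1-z^x)^{-2}$ is borderline divergent and picks up a $\log$ from the harmonic tail cut off at $j\asymp\alpha_1^{-1}$. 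This is why Chebyshev ($k=1$) and even the fourth moment ($k=2$) fail against the $|n|^{4/3}$ denominator, forcing the sixth moment exactly as in Theorem~\ref{th:8.2a}; you have identified the threshold correctly.

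The one place where your write-up is terse is the ``exponential cutoff'' step. A cleaner route than expanding $z^x(1-z^x)^{-k}$ in a power series is to apply Lemma~\ref{lm:2.3} directly to get $\sum_{x\in\calX}\mu_k(x)\le C_k'\sum_{x\in\calX}\langle\alpha,x\rangle^{-k}\e^{-\langle\alpha,x\rangle/2}$ and then estimate this last sum by comparing with the integral over $\RR_+^2$ (or by M\"obius inversion over $\ZZ_+^2$); this makes the $\log$ in the $k=2$ case, and its absence for $k\ge3$, transparent without any truncation bookkeeping.
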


The result (\ref{eq:A}) implies that, for any $r\ne1$ and all
$\varepsilon>0$ small enough,
\[
\|\PP_n^r-\PP_n^1\|_{\mathrm{TV}}\ge
|\PP_n^r(A_\varepsilon^r)-\PP_n^1(A_\varepsilon^r)|\quad\to\quad
|1-0|=1 \qquad (n\to\infty),
\]
and we arrive at the following result.
\begin{theorem}\label{th:lim(r)}
For every fixed  $r\ne1$, we have
\[
{\lim_{n\to\infty}}\|\PP_n^r-\PP_n^1\|_{\mathrm{TV}}=1,
\]
and hence the measures  $\PP_n^r$ and $\PP_n^1$ on $\CP_n$ are
asymptotically singular with respect to each other as
$n\to\infty$.
\end{theorem}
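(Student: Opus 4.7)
The plan is to exhibit, for each fixed $r\ne 1$, an event $A_n\subset\CP_n$ that is asymptotically full under $\PP_n^r$ but asymptotically null under $\PP_n^1$; the total variation distance then forces its way to $1$ via the elementary bound
\begin{equation*}
\|\PP_n^r-\PP_n^1\|_{TV}\ge \bigl|\PP_n^r(A_n)-\PP_n^1(A_n)\bigr|.
\end{equation*}
The natural candidate for such a ``separating'' statistic is the total lattice count $N_\varGamma=\sum_{x\in\calX}\nu_\varGamma(x)$, which already appeared in the proof of Theorem \ref{th:Vasser} and which, intuitively, ought to discriminate sharply between different values of $r$ in view of Remark \ref{rm:r} (since $r$ controls the propensity of edges to carry many lattice points).

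Concretely, I would invoke Lemma \ref{lm:A3}, which provides the law of large numbers
\begin{equation*}
\frac{N_\varGamma}{(n_1 n_2)^{1/3}}\longrightarrow \frac{r^{1/3}}{\kappa^2}\qquad (\PP_n^r\text{-in probability}),
\end{equation*}
with the limit depending monotonically and injectively on $r\in(0,\infty)$. Given $r\ne 1$, fix $\varepsilon>0$ small enough that
\begin{equation*}
\varepsilon<\tfrac{1}{2}\myp\kappa^{-2}\mypp|r^{1/3}-1|,
\end{equation*}
so that the sets $A_\varepsilon^r$ and $A_\varepsilon^1$ defined in \textup{(\ref{eq:A})} are disjoint. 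Then Lemma \ref{lm:A3} applied with parameter $1$ gives $\PP_n^1(A_\varepsilon^1)\to 1$, and disjointness yields $\PP_n^1(A_\varepsilon^r)\le 1-\PP_n^1(A_\varepsilon^1)\to 0$. On the other hand, Lemma \ref{lm:A3} applied with parameter $r$ gives $\PP_n^r(A_\varepsilon^r)\to 1$. Combining,
\begin{equation*}
\|\PP_n^r-\PP_n^1\|_{TV}\ge \bigl|\PP_n^r(A_\varepsilon^r)-\PP_n^1(A_\varepsilon^r)\bigr|\longrightarrow 1,\qquad n\to\infty,
\end{equation*}
and the reverse bound $\|\PP_n^r-\PP_n^1\|_{TV}\le 1$ is automatic. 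The asymptotic mutual singularity is then an immediate reformulation of the total variation convergence to $1$.

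The main work is already outsourced to Lemma \ref{lm:A3}; the only genuine content of the present proof is the observation that the concentration points $r^{1/3}/\kappa^2$ are distinct for distinct $r$, so that any $r\ne 1$ gives rise to a disjoint pair of concentration events. I do not expect any obstacle here, since the lemma is already stated and the disjointness argument is purely combinatorial. The delicate issue, should one wish to be self-contained, would lie in verifying Lemma \ref{lm:A3} itself, but as the lemma is cited from \cite{BZ1,Z} this is outside the scope of the proof.
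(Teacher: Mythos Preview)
Your argument is correct and essentially identical to the paper's own derivation: both invoke Lemma~\ref{lm:A3} to show that the event $A_\varepsilon^r$ (for $\varepsilon$ small enough that the concentration windows around $r^{1/3}/\kappa^2$ and $1/\kappa^2$ are disjoint) has $\PP_n^r$-probability tending to $1$ and $\PP_n^1$-probability tending to $0$, then conclude via the lower bound $\|\PP_n^r-\PP_n^1\|_{TV}\ge|\PP_n^r(A_\varepsilon^r)-\PP_n^1(A_\varepsilon^r)|$. Your version is slightly more explicit about the choice of $\varepsilon$ and the disjointness step, but there is no substantive difference.
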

\end{appendix}

\section*{Acknowledgments}
We would like to express our deep gratitude to Ya. G. Sinai for
introducing us to the beautiful area of random partitions and for
stimulating discussions over the years. Our sincere thanks are also
due to Yu. V. Prokhorov and A. M. Vershik for useful remarks and
discussions at various stages of this work. We are also thankful to
the anonymous referees for the careful reading of the manuscript
and useful suggestions that have helped to improve the presentation
of the paper.

%

%
\printaddresses


\begin{thebibliography}{41}

\bibitem{ABT}
%
\begin{bbook}[vtex]
\bauthor{\bsnm{Arratia},~\bfnm{Richard}\binits{R.}},
\bauthor{\bsnm{Barbour},~\bfnm{A.~D.}\binits{A.~D.}} \AND
\bauthor{\bsnm{Tavar{\'e}},~\bfnm{Simon}\binits{S.}}
(\byear{2003}).
\btitle{Logarithmic Combinatorial Structures: A Probabilistic Approach}.
\bpublisher{European Mathematical Society},
\baddress{Z\"urich}.
\bid{doi={10.4171/000}, mr={2032426}}
\end{bbook}
%
\endbibitem

\bibitem{AT}
%
\begin{barticle}[mr]
\bauthor{\bsnm{Arratia},~\bfnm{Richard}\binits{R.}} \AND
\bauthor{\bsnm{Tavar{\'e}},~\bfnm{Simon}\binits{S.}}
(\byear{1994}).
\btitle{Independent process approximations for random combinatorial
structures}.
\bjournal{Adv. Math.}
\bvolume{104}
\bpages{90--154}.
\bid{doi={10.1006/aima.1994.1022}, mr={1272071}}
\end{barticle}
%
\endbibitem

\bibitem{B}
%
\begin{barticle}[mr]
\bauthor{\bsnm{B{\'a}r{\'a}ny},~\bfnm{I.}\binits{I.}}
(\byear{1995}).
\btitle{The limit shape of convex lattice polygons}.
\bjournal{Discrete Comput. Geom.}
\bvolume{13}
\bpages{279--295}.
\bid{doi={10.1007/BF02574045}, mr={1318778}}
\end{barticle}
%
\endbibitem

\bibitem{Bellman}
%
\begin{bbook}[vtex]
\bauthor{\bsnm{Bellman},~\bfnm{Richard}\binits{R.}}
(\byear{1970}).
\btitle{Introduction to Matrix Analysis},
\bedition{2nd} ed.
\bpublisher{McGraw-Hill},
\baddress{New York}.
\bid{mr={0258847}}
\end{bbook}
%
\endbibitem

\bibitem{BR}
%
\begin{bbook}[vtex]
\bauthor{\bsnm{Bhattacharya},~\bfnm{R.~N.}\binits{R.~N.}} \AND
\bauthor{\bsnm{Ranga~Rao},~\bfnm{R.}\binits{R.}}
(\byear{1986}).
\btitle{Normal Approximation and Asymptotic Expansions}.
\bpublisher{Krieger}, \baddress{Malabar, FL}.
\bid{mr={0855460}}
\end{bbook}
%
\endbibitem

\bibitem{BZ1}
%
\begin{barticle}[mr]
\bauthor{\bsnm{Bogachev},~\bfnm{L.~V.}\binits{L.~V.}} \AND
\bauthor{\bsnm{Zarbaliev},~\bfnm{S.~M.}\binits{S.~M.}}
(\byear{1999}).
\btitle{Limit theorems for a certain class of random convex polygonal lines}.
\bjournal{Russian Math. Surveys}
\bvolume{54}
\bpages{830--832}.
\end{barticle}
%
\endbibitem

\bibitem{BZ2}
%
\begin{barticle}[vtex]
\bauthor{\bsnm{Bogachev},~\bfnm{L.~V.}\binits{L.~V.}} \AND
\bauthor{\bsnm{Zarbaliev},~\bfnm{S.~M.}\binits{S.~M.}}
(\byear{1999}).
\btitle{Approximation of convex functions by random polygonal lines}.
\bjournal{Dokl. Math.}
\bvolume{59}
\bpages{46--49}.
\end{barticle}
%
\endbibitem

\bibitem{BZ2a}
%
\begin{bmisc}[vtex]
\bauthor{\bsnm{Bogachev},~\bfnm{L.~V.}\binits{L.~V.}} \AND
\bauthor{\bsnm{Zarbaliev},~\bfnm{S.~M.}\binits{S.~M.}}
(\byear{2004}).
\bhowpublished{Approximation of convex curves by random lattice polygons.
Preprint, NI04003-IGS, Isaac Newton Inst. Math. Sci., Cambridge. Available
at}
\url{http://www.newton.cam.ac.uk/preprints/NI04003.pdf}.
\end{bmisc}
%
\endbibitem

\bibitem{BZ3}
%
\begin{barticle}[vtex]
\bauthor{\bsnm{Bogachev},~\bfnm{L.~V.}\binits{L.~V.}} \AND
\bauthor{\bsnm{Zarbaliev},~\bfnm{S.~M.}\binits{S.~M.}}
(\byear{2009}).
\btitle{A proof of the Vershik--Prohorov conjecture on the universality of the
limit shape for a class of random polygonal lines}.
\bjournal{Dokl. Math.}
\bvolume{79}
\bpages{197--202}.
\end{barticle}
%
\endbibitem

\bibitem{Comtet}
%
\begin{barticle}[mr]
\bauthor{\bsnm{Comtet},~\bfnm{Alain}\binits{A.}},
\bauthor{\bsnm{Majumdar},~\bfnm{Satya~N.}\binits{S.~N.}},
\bauthor{\bsnm{Ouvry},~\bfnm{St{\'e}phane}\binits{S.}} \AND
\bauthor{\bsnm{Sabhapandit},~\bfnm{Sanjib}\binits{S.}}
(\byear{2007}).
\btitle{Integer partitions and exclusion statistics: Limit shapes and the
largest parts of {Y}oung diagrams}.
\bjournal{J.~Stat. Mech. Theory Exp.}
\bpages{P10001 (electronic)}.
\bid{mr={2358050}}
\end{barticle}
%
\endbibitem

\bibitem{D}
%
\begin{barticle}[vtex]
\bauthor{\bsnm{Dobrushin},~\bfnm{R.~L.}\binits{R.~L.}}
(\byear{1970}).
\btitle{Prescribing a system of random
variables by conditional distributions}.
\bjournal{Theory Probab. Appl.}
\bvolume{15}
\bpages{458--486}.
\end{barticle}
%
\endbibitem

\bibitem{F}
%
\begin{bbook}[vtex]
\bauthor{\bsnm{Feller},~\bfnm{William}\binits{W.}}
(\byear{1968}).
\btitle{An Introduction to Probability Theory and Its Applications. {V}ol.
{I}},
\bedition{3rd} ed.
\bpublisher{Wiley}, \baddress{New York}.
\bid{mr={0228020}}
\end{bbook}
%
\endbibitem

\bibitem{FG}
%
\begin{barticle}[mr]
\bauthor{\bsnm{Freiman},~\bfnm{Gregory~A.}\binits{G.~A.}} \AND
\bauthor{\bsnm{Granovsky},~\bfnm{Boris~L.}\binits{B.~L.}}
(\byear{2005}).
\btitle{Clustering in coagulation-fragmentation processes, random combinatorial
structures and additive number systems: Asymptotic formulae and limiting
laws}.
\bjournal{Trans. Amer. Math. Soc.}
\bvolume{357}
\bpages{2483--2507}.
\bid{doi={10.1090/S0002-9947-04-03617-7}, mr={2140447}}
\end{barticle}
%
\endbibitem

\bibitem{FVY}
%
\begin{barticle}[vtex]
\bauthor{\bsnm{Freiman},~\bfnm{G.~A.}\binits{G.~A.}},
\bauthor{\bsnm{Vershik},~\bfnm{A.~M.}\binits{A.~M.}} \AND
\bauthor{\bsnm{Yakubovich},~\bfnm{Yu.~V.}\binits{Y.~V.}}
(\byear{2000}).
\btitle{A local limit theorem for random strict partitions}.
\bjournal{Theory Probab. Appl.}
\bvolume{44}
\bpages{453--468}.
\bptnote{check year}
\end{barticle}
%
\endbibitem

\bibitem{FY}
%
\begin{bincollection}[mr]
\bauthor{\bsnm{Freiman},~\bfnm{Gregory~A.}\binits{G.~A.}} \AND
\bauthor{\bsnm{Yudin},~\bfnm{Alexander~A.}\binits{A.~A.}}
(\byear{2006}).
\btitle{The interface between probability theory and additive number theory
(local limit theorems and structure theory of set addition)}.
In \bbooktitle{Representation Theory, Dynamical Systems, and Asymptotic
Combinatorics}
(\beditor{\bfnm{V.}\binits{V.}~\bsnm{Kaimanovich}} \AND
  \beditor{\bfnm{A.}\binits{A.}~\bsnm{Lodkin}}, eds.).
\bseries{Amer. Math. Soc. Transl. Ser. 2}
\bvolume{217}
\bpages{51--72}.
\bpublisher{Amer. Math. Soc.}, \baddress{Providence, RI}.
\bid{mr={2276101}}
\end{bincollection}
%
\endbibitem

\bibitem{Fristedt}
%
\begin{barticle}[mr]
\bauthor{\bsnm{Fristedt},~\bfnm{Bert}\binits{B.}}
(\byear{1993}).
\btitle{The structure of random partitions of large integers}.
\bjournal{Trans. Amer. Math. Soc.}
\bvolume{337}
\bpages{703--735}.
\bid{doi={10.2307/2154239}, mr={1094553}}
\end{barticle}
%
\endbibitem

\bibitem{Halmos}
%
\begin{bbook}[vtex]
\bauthor{\bsnm{Halmos},~\bfnm{Paul~R.}\binits{P.~R.}}
(\byear{1951}).
\btitle{Introduction to {H}ilbert Space and the Theory of Spectral
Multiplicity}.
\bpublisher{Chelsea}, \baddress{New York}.
\bid{mr={0045309}}
\bptnote{check year}
\end{bbook}
%
\endbibitem

\bibitem{HW}
%
\begin{bbook}[vtex]
\bauthor{\bsnm{Hardy},~\bfnm{G.~H.}\binits{G.~H.}} \AND
\bauthor{\bsnm{Wright},~\bfnm{E.~M.}\binits{E.~M.}}
(\byear{1960}).
\btitle{An Introduction to the Theory of Numbers},
\bedition{4th} ed.
\bpublisher{Oxford Univ. Press}, \baddress{Oxford}.
\end{bbook}
%
\endbibitem

\bibitem{Horn}
%
\begin{bbook}[mr]
\bauthor{\bsnm{Horn},~\bfnm{Roger~A.}\binits{R.~A.}} \AND
\bauthor{\bsnm{Johnson},~\bfnm{Charles~R.}\binits{C.~R.}}
(\byear{1990}).
\btitle{Matrix Analysis}.
\bpublisher{Cambridge Univ. Press}, \baddress{Cambridge}.
\bid{mr={1084815}}
\end{bbook}
%
\endbibitem

\bibitem{Iv}
%
\begin{bbook}[vtex]
\bauthor{\bsnm{Ivi{\'c}},~\bfnm{Aleksandar}\binits{A.}}
(\byear{1985}).
\btitle{The {R}iemann Zeta-Function: The Theory of the Riemann Zeta-Function with Applications}.
\bpublisher{Wiley}, \baddress{New York}.
\bid{mr={0792089}}
\end{bbook}
%
\endbibitem

\bibitem{Ka}
%
\begin{bbook}[vtex]
\bauthor{\bsnm{Karatsuba},~\bfnm{Anatolij~A.}\binits{A.~A.}}
(\byear{1993}).
\btitle{Basic Analytic Number Theory}.
\bpublisher{Springer}, \baddress{Berlin}.
\bid{mr={1215269}}
\end{bbook}
%
\endbibitem

\bibitem{Kh1}
%
\begin{bbook}[vtex]
\bauthor{\bsnm{Khinchin},~\bfnm{A.~I.}\binits{A.~I.}}
(\byear{1949}).
\btitle{Mathematical {F}oundations of {S}tatistical {M}echanics}.
\bpublisher{Dover}, \baddress{New York}.
\bid{mr={0029808}}
\end{bbook}
%
\endbibitem

\bibitem{Kh2}
%
\begin{bbook}[vtex]
\bauthor{\bsnm{Khinchin},~\bfnm{A.~Y.}\binits{A.~Y.}}
(\byear{1960}).
\btitle{Mathematical Foundations of Quantum Statistics}.
\bpublisher{Graylock Press}, \baddress{Albany, NY}.
\bid{mr={0111217}}
\end{bbook}
%
\endbibitem

\bibitem{Kolchin}
%
\begin{bbook}[mr]
\bauthor{\bsnm{Kolchin},~\bfnm{V.~F.}\binits{V.~F.}}
(\byear{1999}).
\btitle{Random Graphs}.
\bseries{Encyclopedia of Mathematics and Its Applications}
\bvolume{53}.
\bpublisher{Cambridge Univ. Press}, \baddress{Cambridge}.
\bid{mr={1728076}}
\end{bbook}
%
\endbibitem

\bibitem{Lancaster}
%
\begin{bbook}[mr]
\bauthor{\bsnm{Lancaster},~\bfnm{Peter}\binits{P.}}
(\byear{1969}).
\btitle{Theory of Matrices}.
\bpublisher{Academic Press}, \baddress{New York}.
\bid{mr={0245579}}
\end{bbook}
%
\endbibitem

\bibitem{Pitman}
%
\begin{bbook}[vtex]
\bauthor{\bsnm{Pitman},~\bfnm{J.}\binits{J.}}
(\byear{2006}).
\btitle{Combinatorial Stochastic Processes}.
\bseries{Lecture Notes in Math.}
\bvolume{1875}.
\bpublisher{Springer}, \baddress{Berlin}.
\bid{mr={2245368}}
\end{bbook}
%
\endbibitem

\bibitem{Prokhorov}
%
\begin{bmisc}[vtex]
\bauthor{\bsnm{Prokhorov},~\bfnm{Yu.~V.}\binits{Yu.~V.}}
(\byear{1998}).
\bhowpublished{Private communication}.
\end{bmisc}
%
\endbibitem

\bibitem{Ruelle}
%
\begin{bbook}[vtex]
\bauthor{\bsnm{Ruelle},~\bfnm{David}\binits{D.}}
(\byear{1969}).
\btitle{Statistical Mechanics: {R}igorous Results}.
\bpublisher{Benjamin}, \baddress{New York}.
\bid{mr={0289084}}
\end{bbook}
%
\endbibitem

\bibitem{S}
%
\begin{barticle}[mr]
\bauthor{\bsnm{Sinai},~\bfnm{Ya.~G.}\binits{Ya.~G.}}
(\byear{1994}).
\btitle{Probabilistic approach to the
analysis of statistics for convex polygonal lines}.
\bjournal{Funct. Anal. Appl.}
\bvolume{28}
\bpages{108--113}.
\end{barticle}
%
\endbibitem

\bibitem{Titch2}
%
\begin{bbook}[vtex]
\bauthor{\bsnm{Titchmarsh},~\bfnm{E.~C.}\binits{E.~C.}}
(\byear{1952}).
\btitle{The Theory of Functions},
\bedition{2nd} ed.
\bpublisher{Oxford Univ. Press}, \baddress{Oxford}.
\end{bbook}
%
\endbibitem

\bibitem{Titch1}
%
\begin{bbook}[mr]
\bauthor{\bsnm{Titchmarsh},~\bfnm{E.~C.}\binits{E.~C.}}
(\byear{1986}).
\btitle{The Theory of the {R}iemann Zeta-Function}, \bedition{2nd} ed.
\bpublisher{Oxford Univ. Press}, \baddress{Oxford}.
\bid{mr={0882550}}
\end{bbook}
%
\endbibitem

\bibitem{V1}
%
\begin{barticle}[mr]
\bauthor{\bsnm{Vershik},~\bfnm{A.~M.}\binits{A.~M.}}
(\byear{1994}).
\btitle{The limit form of convex integral polygons and related problems}.
\bjournal{Funct. Anal. Appl.}
\bvolume{28}
\bpages{13--20}.
\end{barticle}
%
\endbibitem

\bibitem{V2}
%
\begin{binproceedings}[mr]
\bauthor{\bsnm{Vershik},~\bfnm{Anatoly~M.}\binits{A.~M.}}
(\byear{1995}).
\btitle{Asymptotic combinatorics and algebraic analysis}.
In \bbooktitle{Proceedings of the {I}nternational {C}ongress of
{M}athematicians ({Z}\"urich, 1994)}
\bvolume{2}
\bpages{1384--1394}.
\bpublisher{Birkh\"auser}, \baddress{Basel}.
\bid{mr={1404040}}
\end{binproceedings}
%
\endbibitem

\bibitem{V3}
%
\begin{barticle}[mr]
\bauthor{\bsnm{Vershik},~\bfnm{A.~M.}\binits{A.~M.}}
(\byear{1996}).
\btitle{Statistical mechanics of combinatorial partitions, and their limit
configurations}.
\bjournal{Funct. Anal. Appl.}
\bvolume{30}
\bpages{90--105}.
\end{barticle}
%
\endbibitem

\bibitem{V4}
%
\begin{barticle}[vtex]
\bauthor{\bsnm{Vershik},~\bfnm{A.~M.}\binits{A.~M.}}
(\byear{1997}).
\btitle{Limit distribution of the energy of a quantum ideal gas from
the point
of view of the theory of partitions of natural numbers}.
\bjournal{Russian Math. Surveys}
\bvolume{52}
\bpages{379--386}.%
\end{barticle}%
%
\endbibitem%

\bibitem{VKantorovich}
%
\begin{barticle}[mr]
\bauthor{\bsnm{Vershik},~\bfnm{A.~M.}\binits{A.~M.}}
(\byear{2006}).
\btitle{The {K}antorovich metric: The initial history and little-known
applications}.
\bjournal{J. Math. Sci. (N. Y.)}
\bvolume{133}
\bpages{1410--1417}.
\bptnote{check year}
\end{barticle}
%
\endbibitem

\bibitem{VZ}
%
\begin{barticle}[mr]
\bauthor{\bsnm{Vershik},~\bfnm{A.}\binits{A.}} \AND
\bauthor{\bsnm{Zeitouni},~\bfnm{O.}\binits{O.}}
(\byear{1999}).
\btitle{Large deviations in the geometry of convex lattice polygons}.
\bjournal{Israel J. Math.}
\bvolume{109}
\bpages{13--27}.
\bid{doi={10.1007/BF02775023}, mr={1679585}}
\end{barticle}
%
\endbibitem

\bibitem{Widder}
%
\begin{bbook}[vtex]
\bauthor{\bsnm{Widder},~\bfnm{David~Vernon}\binits{D.~V.}}
(\byear{1941}).
\btitle{The {L}aplace {T}ransform}.
\bseries{Princeton Mathematical Series}
\bvolume{6}.
\bpublisher{Princeton Univ. Press}, \baddress{Princeton, NJ}.
\bid{mr={0005923}}
\bptnote{check year}
\end{bbook}
%
\endbibitem

\bibitem{Yeh}
%
\begin{bbook}[mr]
\bauthor{\bsnm{Yeh},~\bfnm{J.}\binits{J.}}
(\byear{1995}).
\btitle{Martingales and Stochastic Analysis}.
\bseries{Series on Multivariate Analysis}
\bvolume{1}.
\bpublisher{World Scientific}, \baddress{Singapore}.
\bid{mr={1412800}}
\end{bbook}
%
\endbibitem\

\bibitem{Z}
%
\begin{bmisc}[vtex]
\bauthor{\bsnm{Zarbaliev},~\bfnm{S.~M.}\binits{S.~M.}}
(\byear{2004}).
\bhowpublished{Limit theorems for random
convex polygonal lines. Ph.D. thesis. Moscow State Univ.,
Moscow}.
\end{bmisc}
%
\endbibitem

\end{thebibliography}
\end{document}